\documentclass[11pt]{article}

\usepackage[T1]{fontenc}
\usepackage[utf8]{inputenc}
\usepackage[a4paper,textwidth=126mm,textheight=195mm,centering,includeheadfoot]{geometry}
\usepackage{amsmath,amssymb,amsfonts,amsthm,mathtools}
\usepackage{bm}
\usepackage{enumitem}
\usepackage{microtype}
\usepackage{natbib}
\usepackage[hidelinks]{hyperref}
\usepackage[nameinlink,noabbrev]{cleveref}
\usepackage{fancyhdr}
\ifdefined\reviewversion
  \usepackage[mathlines]{lineno}
\fi

\allowdisplaybreaks
\numberwithin{equation}{section}
\setcitestyle{authoryear,round,semicolon}

\newtheorem{theorem}{Theorem}[section]
\newtheorem{proposition}[theorem]{Proposition}
\newtheorem{lemma}[theorem]{Lemma}
\newtheorem{corollary}[theorem]{Corollary}
\newtheorem{definition}[theorem]{Definition}

\newtheorem{example}[theorem]{Example}
\newtheorem{counterexample}[theorem]{Counterexample}
\theoremstyle{remark}
\newtheorem{remark}[theorem]{Remark}

\newcommand{\R}{\mathbb{R}}
\newcommand{\C}{\mathbb{C}}

\newcommand{\Id}{I}
\newcommand{\Dg}{D_g}
\newcommand{\Cg}{C_g}
\newcommand{\mug}{\mu_g}
\newcommand{\mugc}{\mu_g^{\mathrm c}}
\newcommand{\ACg}{AC_g}

\newcommand{\Lg}[1]{L_g^{#1}}
\newcommand{\dmu}{\,d\mu_g}
\newcommand{\dmuc}{\,d\mu_g^{\mathrm c}}
\newcommand{\Deltag}{\Delta g}
\newcommand{\Range}{\operatorname{Ran}}
\newcommand{\Ker}{\operatorname{Ker}}
\newcommand{\rank}{\operatorname{rank}}
\newcommand{\Span}{\operatorname{span}}
\newcommand{\essspan}{\operatorname*{ess\,span}_{\mu_g}}

\newcommand{\cR}{\mathcal{R}}
\newcommand{\cK}{\mathcal{K}}

\newcommand{\cU}{\mathcal{U}}
\newcommand{\cC}{\mathcal{C}}

\newcommand{\norm}[1]{\left\lVert #1\right\rVert}
\newcommand{\abs}[1]{\left|#1\right|}
\newcommand{\ip}[2]{\left\langle #1,#2\right\rangle}
\newcommand{\eG}{\mathcal E_g}

\title{Reachability and exact controllability of linear Stieltjes differential systems}
\author{Francisco J. Fern\'andez\\[0.6ex]
\small\textit{Departamento de Estat\'istica, An\'alise Matem\'atica e Optimizaci\'on,}\\
\small\textit{Facultade de Matem\'aticas, Campus Vida,}\\
\small\textit{Universidade de Santiago de Compostela,}\\
\small\textit{15782 Santiago de Compostela, Spain}\\
\small\textit{CITMAga, 15782 Santiago de Compostela, Spain}\\[0.5ex]
\small Corresponding author. Email: \href{mailto:fjavier.fernandez@usc.es}{fjavier.fernandez@usc.es}\\
\small ORCID: \href{https://orcid.org/0000-0002-0667-2639}{0000-0002-0667-2639}}
\date{}

\hypersetup{
 pdftitle={Reachability and exact controllability of linear Stieltjes differential systems},
 pdfauthor={Francisco J. Fernandez},
 pdfkeywords={Stieltjes differential equations; reachability; exact controllability; atomic measures; Kalman condition; minimum-energy control}
}

\begin{document}
\maketitle
\thispagestyle{plain}

\begin{abstract}
We study reachability and exact controllability for systems driven by a
nondecreasing left-continuous Stieltjes derivator whose measure may have continuous,
singular-continuous and atomic parts, while flat intervals represent inactivity. We
construct an evolution family and post-jump variation-of-constants formula, and
characterize the reachable subspace as the essential span of input directions. For
constant coefficients, the Kalman condition is necessary for every derivator and,
under regressivity, sufficient when the derivator is continuous and nonconstant on a
subinterval. If the measure is purely atomic with exactly \(N\) positive atoms, the
reachable subspace is the truncated Krylov space generated by
\(B,AB,\ldots,A^{N-1}B\); for a Kalman-controllable pair, the minimum atom count among
purely atomic derivators equals the controllability index. At the threshold count, one
vanishing mass with the others converging to positive finite limits causes Gramian
degeneration and worst-case energy blow-up; at or above threshold, compact positive
mass ranges yield uniform coercivity. A mixed double-integrator example illustrates the results.
\end{abstract}

\noindent\textbf{Keywords:} Stieltjes differential equations; reachability; exact controllability; atomic measures; controllability index; minimum-energy control.

\medskip
\noindent\textbf{Mathematics Subject Classification (2020):} 93B05; 34A12; 26A24; 93C05; 93C10.

\ifdefined\reviewversion
\linenumbers
\fi

\section{Introduction}

Stieltjes differential equations provide a measure-based formulation for systems whose
evolution combines continuous phases, inactive intervals and instantaneous changes. A
nondecreasing left-continuous function \(g\), called a derivator, determines the
Lebesgue--Stieltjes measure \(\mu_g\). Its non-atomic part describes continuous
evolution, flat portions of \(g\) represent inactive phases and its atoms generate jump
relations. This setting contains ordinary differential equations, difference equations
and impulsive systems as particular cases \citep{LopezRodriguez2015,FrigonPouso2017}.

The analytic foundations of Stieltjes differential equations include the fundamental
theorem of Stieltjes calculus, pointwise and higher-order differentiation, the
nonclassical product rule and Stieltjes exponentials
\citep{FrigonPouso2017,FernandezMarquezTojo2022,CoraFernandezTojo2023,FernandezMarquezTojoOpen2024,FernandezMarquezTojoProduct2025}.
Compactness properties of the associated function spaces were studied by
\citet{FernandezTojoVillanueva2024}. We work throughout with the canonical space of
\(g\)-absolutely continuous functions. This fixes the representative of a solution and
avoids the nonuniqueness that may arise in larger spaces because the pointwise
Stieltjes derivative can have a nontrivial kernel
\citep{FernandezEtAlKernel2025}.

Controllability on nonstandard time domains has an extensive literature. Dynamic
systems on time scales admit transition matrices, shifted input kernels,
controllability Gramians and Kalman-type criteria
\citep{BohnerPeterson2001,DavisEtAl2009}. Optimal control on arbitrary time scales was
placed in a general nonlinear framework by \citet{BourdinTrelat2013}, who proved a
strong Pontryagin maximum principle and showed that right-dense and right-scattered
points lead to genuinely different first-order conditions: Hamiltonian maximization
holds almost everywhere at right-dense points, whereas at right-scattered points only
a directional condition is available without additional convexity. Closely related
constructions occur in impulsive systems and in generalized or measure-driven
differential equations \citep{LakshmikanthamBainovSimeonov1989,Schwabik1992}. A first
controllability theory for Stieltjes systems was proposed by
\citet{SiWangFeckan2020}. These precedents show that the existence of a transition
matrix, a Gramian, or a maximum principle is not, by itself, the relevant novelty. The
question addressed here is instead how the temporal structure encoded by \(\mu_g\)
determines which control directions are visible at the terminal time and how it
affects the associated minimum-energy problem.

For a fixed constant pair \((A,B)\), the classical Kalman space describes the
directions obtained by repeated action of \(A\) on the input space. The Stieltjes
derivator supplies a second and independent resource: the family of propagated input
directions that appears in the endpoint operator. An atom-free interval on which
the derivator varies continuously produces a continuum of such directions, a single atom provides only one,
and a finite set of atoms generates a finite-horizon Krylov structure. The paper makes
this distinction precise and quantifies its consequences.

\subsection{Main contributions and relation to the literature}

The first contribution is a measure-dependent characterization of reachability. For
\(A\in L_g^1\), we construct the canonical evolution family. Under \(g\)-regressivity
we derive the post-jump variation-of-constants formula for \(L_g^1\) forcing. With
\(B\in L_g^2\) and controls in \(L_g^2\), the endpoint operator then gives
\[
 \mathcal R_g(t_0,t_1)
 =\operatorname*{ess\,span}_{\mu_g}
 \left\{\operatorname{Ran}\bigl(\Phi(t_1,s^+)B(s)\bigr):s\in[t_0,t_1)\right\}.
\]
This identity treats absolutely continuous, singular-continuous and atomic temporal
components in a common Hilbert-space framework. In particular, the post-jump argument
\(s^+\) and the atomic weights in the Gramian follow from the Stieltjes product rule
and the canonical jump identity; they are not additional modelling conventions.

The second contribution is an exact classification for finite atomic derivators. For
constant matrices \(A\) and \(B\), the Kalman condition is necessary for every
derivator and, under the regressivity hypothesis of
Theorem~\ref{thm:continuous-window}, becomes sufficient whenever \(g\) has a continuous
nonconstant restriction to a nondegenerate subinterval. If \(\mu_g\) is purely atomic
and consists of exactly \(N\) positive atoms, we prove
\[
 \mathcal R_g(t_0,t_1)
 =\operatorname{span}\{\operatorname{Ran}B,\operatorname{Ran}AB,\ldots,
 \operatorname{Ran}A^{N-1}B\}.
\]
Consequently, for a Kalman-controllable pair, the least number of atoms among purely
atomic derivators that yields exact controllability is its controllability index. This
conclusion holds for every choice of positive masses and is independent of the
locations of the atoms.

The third contribution separates algebraic reachability from control energy. Although
positive atomic masses do not alter the reachable subspace in the preceding
classification, they enter the Gramian as weights. For a fixed Kalman-controllable
pair with \(N=\nu(A,B)\), we show that the smallest Gramian eigenvalue tends to zero
when exactly one mass tends to zero and the others converge to positive finite limits;
therefore the worst-case minimum energy diverges. For a fixed Kalman-controllable pair
and \(N\ge\nu(A,B)\), masses in a compact subset of \((0,\infty)^N\) yield a uniform
positive Gramian lower bound. Thus the number of active temporal events determines
reachability, whereas their masses determine the conditioning of the control problem.

The evolution-family and Gramian results developed below provide the analytic
infrastructure for these three conclusions. They are stated in detail because the
continuous, singular-continuous and atomic parts of \(\mu_g\) must be handled
simultaneously and because the forcing term at an atom is propagated from the
post-jump state.

The paper is organized as follows. Section~\ref{sec:prelim} introduces the Stieltjes
framework and the canonical solution space. Section~\ref{sec:comparison} compares the
present formulation with continuous-time, time-scale, impulsive and measure-driven
systems. Section~\ref{sec:linear} develops the evolution-family and
variation-of-constants theory. Section~\ref{sec:control} contains the endpoint,
Gramian and reachability results, including the continuous-component criterion, the
finite-atomic classification and the energy estimates. The final section summarizes
the conclusions and directions for further work.

\section{Preliminaries}\label{sec:prelim}

Let \(I=[t_0,T]\), where \(t_0<T\), and let
\(g:\R\to\R\) be nondecreasing and left-continuous. We denote
\[
 \Deltag(t):=g(t^+)-g(t),\qquad
 \Dg:=\{t\in\R:\Deltag(t)>0\},
\]
and
\[
 \Cg:=\{t\in\R:\text{\(g\) is constant on a neighborhood of \(t\)}\}.
\]
The Lebesgue--Stieltjes measure associated with \(g\) is denoted by \(\mug\) and is
normalized by
\[
 \mug([a,b))=g(b)-g(a),\qquad a<b.
\]
In particular,
\[
 \mug(\{t\})=\Deltag(t),\qquad t\in\Dg.
\]
The restriction of \(\mug\) to \([t_0,T)\) is finite. On this interval its
atomic--non-atomic decomposition is
\[
 \mug|_{[t_0,T)}=\mugc|_{[t_0,T)}
 +\sum_{\tau\in\Dg\cap[t_0,T)}\Deltag(\tau)\delta_\tau,
\]
where \(\mugc\) denotes the non-atomic part; see, e.g., \citep{Bogachev2007}.
For the present normalization the atomic mass at \(\tau<T\) is exactly
\(\Deltag(\tau)\).
The set \(\Dg\cap[t_0,T)\) is at most countable and
\[
 \sum_{\tau\in\Dg\cap[t_0,T)}\Deltag(\tau)\le g(T)-g(t_0).
\]

The measure \(\mug\) is understood on its completion. Whenever an
\(L_g^p\)-coefficient is used pointwise, we choose a Borel representative. This causes
no ambiguity at an atom, because a singleton \(\{\tau\}\), \(\tau\in\Dg\), has positive
\(\mug\)-measure and hence the value of an \(L_g^p\)-class at \(\tau\) is fixed.

The pseudometric induced by the derivator is
\[
 d_g(s,t):=\abs{g(t)-g(s)}.
\]
A function \(x:I\to\R^n\) is called \(g\)-continuous if it is continuous from
\((I,d_g)\) to \(\R^n\). Equivalently, for every \(t\in I\) and every
\(\varepsilon>0\), there is \(\delta>0\) such that
\[
 \abs{g(s)-g(t)}<\delta
 \quad\Longrightarrow\quad
 \norm{x(s)-x(t)}<\varepsilon.
\]
This is the standard continuity notion associated with a derivator; see
\cite[Definition~2.1 and Proposition~2.3]{FernandezTojoVillanueva2024}.

All vector spaces are endowed with their Euclidean inner products. Matrix norms are
induced operator norms unless another norm is stated explicitly. In particular, for a
symmetric positive matrix \(W\), \(\lambda_{\min}(W)\) denotes its smallest eigenvalue
and \(\norm{W^{-1}}=1/\lambda_{\min}(W)\).

\begin{example}[Inactive intervals]\label{ex:plateau}
Let
\[
 g(t)=
 \begin{cases}
 t,&t\le1,\\
 1,&1<t\le2,\\
 t-1,&t>2.
 \end{cases}
\]
Then \(g\) is continuous and nondecreasing, while \((1,2)\subset\Cg\). The canonical
solution of
\[
 x'_g(t)=1,\qquad x(0)=0,
\]
is \(x(t)=g(t)-g(0)\), and is therefore constant on \([1,2]\). Thus the set \(\Cg\) describes an actual inactive phase of the system. In the measure formulation this property follows directly from \(\mug((1,2))=0\).
\end{example}

\subsection{Canonical absolute continuity}

We adopt the following integral definition, which determines a canonical representative and avoids separate conventions at the endpoints of the components of \(\Cg\).

\begin{definition}\label{def:acg}
A function \(x:I\to\R^n\) belongs to \(\ACg(I;\R^n)\) if there exists
\(v\in\Lg{1}([t_0,T);\R^n)\) such that
\begin{equation}\label{eq:canonical-representation}
 x(t)=x(t_0)+\int_{[t_0,t)}v(s)\dmu(s),
 \qquad t\in I.
\end{equation}
The function \(v\), uniquely determined \(\mug\)-almost everywhere, is denoted by
\(x'_g\).
\end{definition}

The equivalence between this integral representation and the usual definition of
Stieltjes absolute continuity is the fundamental theorem of Stieltjes calculus; see
\cite[Proposition~5.4]{FrigonPouso2017} and the convenient restatement
\cite[Theorem~2.12]{FernandezMarquezTojoOpen2024}. Since every atom has positive
\(\mug\)-measure, the value of \(x'_g\) at an atom is determined by its
\(L_g^1\)-equivalence class.

\begin{proposition}[Canonical representative and jump law]\label{prop:canonical}
Let \(x\in\ACg(I;\R^n)\). Then \(x\) is bounded and \(g\)-continuous, and it is
constant on every interval on which \(g\) is constant. Moreover, for every
\(t\in[t_0,T)\cap\Dg\),
\begin{equation}\label{eq:canonical-jump}
 x(t^+)-x(t)=\Deltag(t)x'_g(t).
\end{equation}
\end{proposition}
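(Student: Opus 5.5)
The argument works directly from the integral representation \eqref{eq:canonical-representation}. Throughout, write \(v=x'_g\in\Lg{1}([t_0,T);\R^n)\) and \(F(E):=\int_E v\dmu\) for Borel \(E\subset[t_0,T)\). Since \(\norm{F(E)}\le\int_E\norm{v}\dmu\), \(F\) is a vector measure of bounded variation that is absolutely continuous with respect to \(\mug\). Boundedness is then immediate: \(\norm{x(t)}\le\norm{x(t_0)}+\norm{v}_{\Lg{1}}\) for all \(t\in I\).

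For \(g\)-continuity, fix \(t\in I\) and \(\varepsilon>0\). By absolute continuity of the integral, there is \(\delta>0\) such that \(\mug(E)<\delta\) implies \(\int_E\norm{v}\dmu<\varepsilon\). If \(\abs{g(s)-g(t)}<\delta\), say with \(s<t\), then \(x(t)-x(s)=F([s,t))\) and \(\mug([s,t))=g(t)-g(s)<\delta\), so \(\norm{x(t)-x(s)}<\varepsilon\); the case \(s>t\) is symmetric. Constancy on intervals where \(g\) is constant is a special case. If \(g\) is constant on an interval \(J\) and \(s<t\) lie in \(J\), then \(\mug([s,t))=g(t)-g(s)=0\), hence \(F([s,t))=0\) and \(x(s)=x(t)\). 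No separate endpoint convention is needed, because the normalization \(\mug([a,b))=g(b)-g(a)\) already accounts for the closed–open intervals.

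For the jump law, fix \(t\in[t_0,T)\cap\Dg\) and take \(s_k\downarrow t\) with \(s_k\le T\). Then
\[
 x(s_k)-x(t)=F([t,s_k))=\Deltag(t)v(t)+F((t,s_k)),
\]
since \(\mug(\{t\})=\Deltag(t)\) and \(v(t)\) is well defined by the remark on atoms. The sets \((t,s_k)\) decrease to the empty set, so continuity from above of the finite measure \(\int_{\cdot}\norm{v}\dmu\) gives \(F((t,s_k))\to0\). Hence \(x(t^+)\) exists and equals \(x(t)+\Deltag(t)x'_g(t)\). The only step needing care is to justify that \(x(t^+)\) exists as a limit over all sequences, not merely one. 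The argument above works for an arbitrary sequence \(s_k\downarrow t\), and that suffices. It is also worth noting that left-continuity of \(g\) guarantees \(\mug([t,s))\to\Deltag(t)\) as \(s\downarrow t\), which is consistent with this computation.
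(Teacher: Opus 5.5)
Your proof is correct and follows the paper's argument essentially step by step: boundedness from the $L_g^1$ bound, $g$-continuity from absolute continuity of the integral together with $\mu_g([s,t))=g(t)-g(s)$, constancy from $\mu_g([s,t))=0$, and the jump law obtained by splitting off the atomic term $\Delta g(t)v(t)$ and applying continuity from above of the finite measure $\|v\|\mu_g$ on the sets $(t,s)$. Your closing aside misattributes the mechanism: $\mu_g([t,s))=g(s)-g(t)\to\Delta g(t)$ follows from monotonicity of $g$ (which gives the existence of $g(t^+)$), whereas left-continuity is what makes the normalization $\mu_g([a,b))=g(b)-g(a)$ define a measure; this remark plays no role in the proof.
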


\begin{proof}
Let \(v=x'_g\) be the integrand in \eqref{eq:canonical-representation}. For every
\(t\in I\),
\[
 \norm{x(t)}
 \le \norm{x(t_0)}+\int_{[t_0,T)}\norm{v(r)}\dmu(r),
\]
so \(x\) is bounded. If \(s<t\), subtraction of the two integral representations
gives the exact increment formula
\begin{equation}\label{eq:AC-increment}
 x(t)-x(s)=\int_{[s,t)}v(r)\dmu(r),
\end{equation}
and hence
\begin{equation}\label{eq:AC-modulus}
 \norm{x(t)-x(s)}
 \le \int_{[s,t)}\norm{v(r)}\dmu(r).
\end{equation}

Fix \(\varepsilon>0\). By absolute continuity of the Lebesgue integral, there is
\(\delta>0\) such that
\[
 \mug(E)<\delta
 \quad\Longrightarrow\quad
 \int_E\norm{v(r)}\dmu(r)<\varepsilon
\]
for every measurable \(E\subset[t_0,T)\). If \(s<t\) and
\(d_g(s,t)=g(t)-g(s)<\delta\), then
\(\mug([s,t))<\delta\), and \eqref{eq:AC-modulus} yields
\(\norm{x(t)-x(s)}<\varepsilon\). The case \(t<s\) is identical. Thus \(x\) is
\(g\)-continuous.

If \(g\) is constant on \([s,t]\), then \(\mug([s,t))=0\), so
\eqref{eq:AC-increment} gives \(x(t)=x(s)\). Finally, let
\(\tau\in[t_0,T)\cap\Dg\). For \(r>\tau\),
\[
 x(r)-x(\tau)=\int_{[\tau,r)}v(s)\dmu(s)
 =\Deltag(\tau)v(\tau)+\int_{(\tau,r)}v(s)\dmu(s).
\]
As \(r\downarrow\tau\), the sets \((\tau,r)\) decrease to the empty set. Since
\(v\in L_g^1\), the second integral tends to zero by continuity from above of the
finite measure \(\norm{v}\mug\). Therefore the right limit exists and
\[
 x(\tau^+)-x(\tau)=\Deltag(\tau)v(\tau),
\]
which is \eqref{eq:canonical-jump}.
\end{proof}

The canonical space \(\ACg\) is also the appropriate setting for uniqueness. In the larger space of functions that
are merely Stieltjes differentiable at every point, the kernel of the derivative may
contain nonconstant functions; see \citep{FernandezEtAlKernel2025}.

For \(x\in\ACg\) and \(t\in[t_0,T)\), define
\[
 x^+(t):=
 \begin{cases}
 x(t^+),&t\in\Dg,\\
 x(t),&t\notin\Dg.
 \end{cases}
\]
By \eqref{eq:canonical-jump},
\[
 x^+(t)=x(t)+\Deltag(t)x'_g(t)
 \quad\text{for \(\mug\)-almost every \(t\)}.
\]

\begin{proposition}[Product rule]\label{prop:product}
Let \(U\) and \(V\) be compatible vector- or matrix-valued functions in \(\ACg(I)\).
Then \(UV\in\ACg(I)\) and
\begin{align}
 (UV)'_g(t)
 &=
 U'_g(t)V(t)+U^+(t)V'_g(t) \label{eq:product-right}\\
 &=
 U'_g(t)V(t)+U(t)V'_g(t)
 +\Deltag(t)U'_g(t)V'_g(t)                 \label{eq:product-atomic}
\end{align}
for \(\mug\)-almost every \(t\).
\end{proposition}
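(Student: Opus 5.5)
The plan is to prove the product rule directly from the integral representation in Definition~\ref{def:acg}, using a Fubini argument on the square \([s,t)^2\) rather than pointwise differentiation. Write \(U(t)=U(t_0)+\int_{[t_0,t)}u\dmu\) and \(V(t)=V(t_0)+\int_{[t_0,t)}v\dmu\), with \(u=U'_g\) and \(v=V'_g\) in \(L_g^1\). Our goal is the identity
\[
 U(t)V(t)-U(t_0)V(t_0)=\int_{[t_0,t)}\bigl(u(r)V(r)+U^+(r)v(r)\bigr)\dmu(r),\qquad t\in I.
\]
The integrand lies in \(L_g^1\) because \(U\) and \(V\) are bounded by Proposition~\ref{prop:canonical}, and so is \(U^+\), being a right limit or value of \(U\). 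Once the identity holds, \(UV\in\ACg\) with derivative \eqref{eq:product-right}. Formula \eqref{eq:product-atomic} then follows by substituting \(U^+(r)=U(r)+\Deltag(r)u(r)\), which holds \(\mug\)-a.e.\ by the remark after Proposition~\ref{prop:canonical}.

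To prove the identity, we expand
\[
 U(t)V(t)-U(t_0)V(t_0)=\bigl(U(t)-U(t_0)\bigr)V(t_0)+U(t_0)\bigl(V(t)-V(t_0)\bigr)+\bigl(U(t)-U(t_0)\bigr)\bigl(V(t)-V(t_0)\bigr).
\]
The first two terms are already \(\int u\,V(t_0)\) and \(\int U(t_0)v\). The cross term equals \(\iint_{[t_0,t)^2}u(r)v(s)\,d(\mug\otimes\mug)(r,s)\), and Fubini applies since \(\norm{u}\norm{v}\) is integrable for the product measure. We split the square into three regions:
\[
 \{r<s\},\qquad \{r>s\},\qquad \{r=s\}.
\]
On \(\{r<s\}\), integrating in \(r\) first gives \(\int_{[t_0,t)}(U(s)-U(t_0))v(s)\dmu(s)\). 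On \(\{r>s\}\), integrating in \(s\) first gives \(\int_{[t_0,t)}u(r)(V(r)-V(t_0))\dmu(r)\), using \(\int_{[t_0,r)}v=V(r)-V(t_0)\). Matrix order must be kept throughout (\(u\) on the left, \(v\) on the right), which causes no difficulty.

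The main obstacle is the diagonal \(\{r=s\}\). Its product measure is \(\sum_{\tau}\Deltag(\tau)^2\) over atoms, since the non-atomic part contributes nothing: the sections of the diagonal are singletons of zero non-atomic measure. The diagonal therefore contributes
\[
 \sum_{\tau\in\Dg\cap[t_0,t)}\Deltag(\tau)^2u(\tau)v(\tau)=\int_{[t_0,t)}\Deltag(r)u(r)v(r)\dmu(r).
\]
I would verify this carefully by computing the section measure \(\mug(\{s\})=\Deltag(s)\) inside Fubini, so that the diagonal integral is \(\int u(s)v(s)\Deltag(s)\dmu(s)\).

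Summing the pieces gives
\[
 \int\bigl[uV+(U-U(t_0)+U(t_0))v+\Deltag\,uv\bigr]\dmu=\int\bigl[uV+(U+\Deltag\,u)v\bigr]\dmu.
\]
Since \(U+\Deltag\,u=U^+\) holds \(\mug\)-a.e., this yields the claimed identity, and both product formulas follow.
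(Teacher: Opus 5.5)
Your proof is correct, and it takes a different route from the paper. The paper does not derive the integration-by-parts identity. It quotes it for scalar functions from the literature, where it is obtained from the fundamental theorem of Stieltjes calculus together with the pointwise product rule, and extends it componentwise. It then reads off \(UV\in\ACg\) and the two formulas exactly as in your first and last steps.

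You instead prove the identity from scratch, using only Definition~\ref{def:acg} and Fubini's theorem on \([t_0,t)^2\). The two off-diagonal regions produce \(uV\) and \(Uv\). The atomic correction \(\Deltag\,uv\) appears as the contribution of the diagonal, which \(\mug\otimes\mug\) charges only at atoms, with weight \(\Deltag(\tau)^2\).

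What this buys you:
\begin{itemize}
\item The argument is self-contained.
\item It handles matrix products directly, keeping the factor order, with no componentwise reduction.
\item It never uses pointwise Stieltjes differentiability, which the paper itself notes is delicate outside \(\ACg\).
\item It explains structurally where the extra term in \eqref{eq:product-atomic} comes from. In fact your summation gives \eqref{eq:product-atomic} first, and \eqref{eq:product-right} follows after substituting \(U^+=U+\Deltag\,u\).
\end{itemize}
The paper's route is shorter, but it rests on external results.

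Two routine points are worth stating explicitly. First, choose Borel representatives of \(u\) and \(v\), so that Fubini on the product measure and the diagonal section computation \(\mug(\{s\})=\Deltag(s)\) raise no completion issues. Second, \(U^+\) is measurable because it equals \(U+\Deltag\,u\) with \(U\) Borel.
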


\begin{proof}
For scalar-valued functions, the Stieltjes integration-by-parts identity states that
\begin{equation}\label{eq:IBP-Stieltjes}
 U(t)V(t)-U(s)V(s)
 =\int_{[s,t)}U'_g(r)V(r)\dmu(r)
  +\int_{[s,t)}U^+(r)V'_g(r)\dmu(r).
\end{equation}
It follows from the fundamental theorem and the pointwise product rule; see
\cite[Proposition~3.9]{FernandezMarquezTojo2022} and
\cite[Lemma~2.13]{FernandezMarquezTojoOpen2024}. Applying
\eqref{eq:IBP-Stieltjes} componentwise gives the same identity for compatible
vector- and matrix-valued functions. The right-hand side belongs to \(L_g^1\): indeed,
\(U,V\in\ACg\) are bounded by Proposition~\ref{prop:canonical}, whereas
\(U'_g,V'_g\in L_g^1\). Consequently \(UV\in\ACg\), and its
Radon--Nikodym derivative is
\[
 (UV)'_g=U'_gV+U^+V'_g
 \quad\mug\text{-a.e.}
\]
This proves \eqref{eq:product-right}. The jump identity
\(U^+=U+\Deltag\,U'_g\), established in Proposition~\ref{prop:canonical}, then gives
\[
 U'_gV+U^+V'_g
 =U'_gV+UV'_g+\Deltag\,U'_gV'_g,
\]
which is \eqref{eq:product-atomic}.
\end{proof}

The extra term in \eqref{eq:product-atomic} is concentrated on \(\Dg\). On the
non-atomic part of \(\mug\), \(\Deltag=0\) and the formula reduces to the classical
product rule. The higher-order regularity consequences of this correction are studied
in detail in \citep{FernandezMarquezTojoProduct2025}.

\begin{remark}[The natural regularity is measured by \(g\)]\label{rem:not-usual-lipschitz}
If \(t\in\Dg\) and \(x'_g(t)\ne0\), then
\[
 x(t^+)-x(t)=\Deltag(t)x'_g(t)\ne0.
\]
Hence a nontrivial canonical solution may have a jump and cannot be locally Lipschitz
with respect to the ordinary distance \(\abs{s-t}\). Statements of regularity for the
evolution are therefore formulated in \(\ACg\), or equivalently through estimates in
the pseudodistance \(\abs{g(s)-g(t)}\), rather than through ordinary local
Lipschitzness.
\end{remark}

\begin{lemma}[Substitution for a continuous Stieltjes primitive]
\label{lem:continuous-cumulative-substitution}
Let \(Q:[a,b]\to\mathbb R\) be continuous and nondecreasing, and let \(\mu_Q\) be
the associated Lebesgue--Stieltjes measure, normalized by
\(\mu_Q([r_1,r_2))=Q(r_2)-Q(r_1)\). For every \(t\in[a,b]\) and every
nonnegative Borel function \(\psi\),
\begin{equation}\label{eq:continuous-cumulative-substitution}
 \int_{[a,t)}\psi(Q(r))\,d\mu_Q(r)
 =\int_{[Q(a),Q(t))}\psi(y)\,dy.
\end{equation}
The same identity holds for every integrable Borel function.
\end{lemma}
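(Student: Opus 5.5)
The plan is to prove \eqref{eq:continuous-cumulative-substitution} by the standard image-measure argument. We show that the pushforward of \(\mu_Q|_{[a,t)}\) under \(Q\) is Lebesgue measure on \([Q(a),Q(t))\), and then integrate \(\psi\) against both sides. Concretely, set \(\nu:=Q_\#(\mu_Q|_{[a,t)})\), a finite Borel measure on \(\mathbb R\). The abstract change-of-variables formula for image measures gives
\[
 \int_{[a,t)}\psi(Q(r))\,d\mu_Q(r)=\int_{\mathbb R}\psi(y)\,d\nu(y)
\]
for every nonnegative Borel \(\psi\). It therefore suffices to prove \(\nu=\lambda|_{[Q(a),Q(t))}\), where \(\lambda\) is Lebesgue measure.

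The key step is to verify this equality on the \(\pi\)-system of half-open intervals \([c,d)\) and then conclude by the \(\pi\)--\(\lambda\) (Dynkin) uniqueness theorem. Both measures are finite, with total mass \(Q(t)-Q(a)\). For \(c<d\) we compute the preimage \(\{r\in[a,t):c\le Q(r)<d\}\). Since \(Q\) is continuous and nondecreasing, the set \(\{r\in[a,b]:Q(r)\ge c\}\) is an interval \([\alpha,b]\), possibly empty or all of \([a,b]\). Here \(\alpha\) is the smallest point with \(Q(\alpha)=c\) when \(c\in[Q(a),Q(b)]\). Similarly, \(\{Q<d\}\) is an interval \([a,\beta)\), where \(\beta\) is the smallest point with \(Q(\beta)=d\). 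Continuity guarantees that these extremal points exist and that \(Q(\alpha)=c\) and \(Q(\beta)=d\) after clipping to \([Q(a),Q(t)]\). The preimage intersected with \([a,t)\) is then \([\alpha',\beta')\), with \(\alpha'=\max(\alpha,a)\) and \(\beta'=\min(\beta,t)\). Its \(\mu_Q\)-measure is \(Q(\beta')-Q(\alpha')\), which equals the length of \([c,d)\cap[Q(a),Q(t))\) by the normalization \(\mu_Q([r_1,r_2))=Q(r_2)-Q(r_1)\).

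I expect the main obstacle to be this bookkeeping: handling the cases where \(c\) or \(d\) lies outside \([Q(a),Q(t)]\), and where \(Q\) is constant on a plateau at level \(c\) or \(d\). Plateaus are harmless because a flat interval has \(\mu_Q\)-measure zero. Choosing the leftmost preimage point is what makes the preimage a left-closed, right-open interval, and continuity of \(Q\) is exactly what ensures \(Q\) attains \(c\) and \(d\), so that no mass is lost to jumps. Note that for discontinuous \(Q\) the lemma fails, which is why the hypothesis is needed.

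Finally, the extension from nonnegative to integrable Borel \(\psi\) follows by splitting \(\psi=\psi^+-\psi^-\), or into real and imaginary or componentwise parts. Applying the nonnegative case to \(|\psi|\) shows that both sides are finite simultaneously, so the identity passes by linearity.
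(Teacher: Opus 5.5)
Your proposal is correct and follows essentially the same route as the paper: you identify the pushforward \(Q_\#(\mu_Q|_{[a,t)})\) with Lebesgue measure on \([Q(a),Q(t))\) by checking half-open intervals and invoking uniqueness of finite measures, then use the image-measure integral and split into positive and negative parts. The only difference is cosmetic: the paper tests only the initial segments \([Q(a),y)\), using \(\alpha_y=\inf\{r\in[a,t]:Q(r)\ge y\}\) so that the preimage is exactly \([a,\alpha_y)\), and this avoids most of the clipping and plateau bookkeeping you anticipate for general \([c,d)\).
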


\begin{proof}
Let \(\nu_t\) be the pushforward of \(\mu_Q|_{[a,t)}\) by \(Q\). For
\(y\in(Q(a),Q(t))\), set
\[
 \alpha_y:=\inf\{r\in[a,t]:Q(r)\ge y\}.
\]
Continuity and monotonicity imply \(Q(\alpha_y)=y\) and
\(\{r\in[a,t):Q(r)<y\}=[a,\alpha_y)\). Therefore
\[
 \nu_t([Q(a),y))
 =\mu_Q([a,\alpha_y))
 =Q(\alpha_y)-Q(a)=y-Q(a).
\]
The same relation is immediate at the two endpoints. Hence \(\nu_t\) and Lebesgue
measure agree on the half-open intervals generating the Borel sets of
\([Q(a),Q(t))\). Uniqueness of finite measures gives
\(\nu_t=\mathcal L^1|_{[Q(a),Q(t))}\), and
\eqref{eq:continuous-cumulative-substitution} follows from the definition of a
pushforward integral. The integrable case follows by applying the nonnegative result
to the positive and negative parts. This is the standard substitution theorem for a
continuous Lebesgue--Stieltjes distribution function; see, for example,
\citep{Bogachev2007}.
\end{proof}

\subsection{The positive \texorpdfstring{\(g\)}{g}-exponential}

Let \(q\in\Lg{1}([t_0,T);[0,\infty))\). For \(t_0\le s\le t\le T\), define
\begin{equation}\label{eq:positive-gexp}
 E_q(t,s)
 :=
 \exp\!\left(\int_{[s,t)}q(r)\dmuc(r)\right)
 \prod_{\tau\in[s,t)\cap\Dg}
 \bigl(1+q(\tau)\Deltag(\tau)\bigr).
\end{equation}
The product converges because
\[
 \sum_{\tau\in[s,t)\cap\Dg}
 q(\tau)\Deltag(\tau)
 \le \int_{[s,t)}q(r)\dmu(r)<\infty.
\]
Moreover,
\begin{equation}\label{eq:gexp-bound}
 1\le E_q(t,s)
 \le
 \exp\!\left(\int_{[s,t)}q(r)\dmu(r)\right).
\end{equation}

\begin{proposition}[Scalar comparison equation]\label{prop:gexp}
The function \(E_q(\cdot,s)\) is the unique element of \(\ACg([s,T])\) satisfying
\begin{equation}\label{eq:gexp-integral}
 E_q(t,s)
 =
 1+\int_{[s,t)}q(r)E_q(r,s)\dmu(r).
\end{equation}
At an atom \(\tau\),
\[
 E_q(\tau^+,s)
 =
 \bigl(1+q(\tau)\Deltag(\tau)\bigr)E_q(\tau,s).
\]
\end{proposition}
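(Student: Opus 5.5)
I will prove the three assertions of Proposition~\ref{prop:gexp} in order: the integral identity (existence), the atomic jump relation, and uniqueness. For existence, fix \(s\) and write \(E(t):=E_q(t,s)=a(t)p(t)\), where
\[
 a(t):=\exp\!\Bigl(\int_{[s,t)}q\dmuc\Bigr),\qquad
 p(t):=\prod_{\tau\in[s,t)\cap\Dg}\bigl(1+q(\tau)\Deltag(\tau)\bigr).
\]

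\emph{Factor \(a\).} Set \(Q(t):=\int_{[s,t)}q\dmuc\). It is continuous (since \(\mugc\) has no atoms), nondecreasing, and bounded. Applying the classical chain rule for absolutely continuous exponentials of a continuous measure primitive, or equivalently Lemma~\ref{lem:continuous-cumulative-substitution} with \(\psi=\exp\) and \(\mu_Q=q\,\mugc\), gives
\[
 a(t)=1+\int_{[s,t)}q\,a\dmuc .
\]

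\emph{Factor \(p\).} The function \(p\) is a pure-jump, left-continuous, nondecreasing function. I will show
\[
 p(t)=1+\sum_{\tau\in[s,t)\cap\Dg}q(\tau)\Deltag(\tau)\,p(\tau),
\]
since \(p(\tau^+)-p(\tau)=q(\tau)\Deltag(\tau)p(\tau)\). To justify this, first verify it for finite partial products by telescoping, then pass to the limit using \eqref{eq:gexp-bound} and dominated convergence. Hence \(p(t)=1+\int_{[s,t)}q\,p\,d\mu_g^{\mathrm d}\), where \(\mu_g^{\mathrm d}\) is the atomic part of \(\mug\).

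\emph{Combining the factors.} Both \(a\) and \(p\) lie in \(\ACg\) with derivative densities relative to \(\mug\). Since \(\mugc\) and \(\mu_g^{\mathrm d}\) are mutually singular, we may take \(a'_g=qa\) on the non-atomic support, \(a'_g=0\) on \(\Dg\), and \(p'_g=qp\) on \(\Dg\), \(p'_g=0\) elsewhere. Now apply Proposition~\ref{prop:product} in the form \eqref{eq:product-atomic}. The correction term \(\Deltag\,a'_gp'_g\) vanishes because \(a'_g=0\) on \(\Dg\). This yields \((ap)'_g=q\,ap\) \(\mug\)-a.e., which is \eqref{eq:gexp-integral}. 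The jump formula then follows from \eqref{eq:canonical-jump}:
\[
 E(\tau^+)-E(\tau)=\Deltag(\tau)\,q(\tau)E(\tau).
\]

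\emph{Uniqueness.} This is the main obstacle, since a Grönwall argument must handle atoms where \(q\Deltag\) may be large. Let \(D\) be the difference of two \(\ACg\) solutions. Then \(D(t)=\int_{[s,t)}qD\dmu\). Set
\[
 M(t):=\sup_{[s,t]}|D| \quad\text{and}\quad \phi(t):=\int_{[s,t)}q\dmu .
\]
I will argue by contradiction. Let \(t^*:=\sup\{t: D=0\text{ on }[s,t]\}\). Two cases arise.

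\emph{Case 1: \(t^*\notin\Dg\), or \(t^*\) is an atom with \(q(t^*)\Deltag(t^*)\) finite.} In either case we have \(D(t^*)=0\). If \(t^*\) is an atom, then
\[
 D(t^{*+})=D(t^*)+\Deltag(t^*)\,q(t^*)D(t^*)=0 .
\]
On a short interval \((t^*,t^*+h)\), the quantity \(\int_{(t^*,t^*+h)}q\dmu\) tends to \(0\) by continuity from above. Once it is below \(1/2\), we get \(\sup|D|\le \tfrac12\sup|D|\) on that interval, so \(D=0\) there. This contradicts the definition of \(t^*\) unless \(t^*=T\).

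\emph{Case 2: \(g\) is flat to the right of \(t^*\).} Then \(D\) is constant there by Proposition~\ref{prop:canonical}, and the same conclusion holds.

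This local-contraction argument avoids the need for any regressivity hypothesis. Nonnegativity of \(q\) is not even needed for uniqueness, only integrability.
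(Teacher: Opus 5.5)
Your proof is correct, but it takes a different route from the paper in both halves.

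\textbf{Existence.} The paper truncates the atomic part of \(\mug\) to finitely many atoms. It checks the Volterra identity for the truncated exponential \(E_q^{(N)}\), then passes to the limit simultaneously in the integrand and in the measure, using uniform convergence of the product and the tail \(\sum_{k>N}q(\tau_k)\Deltag(\tau_k)\). You instead factor \(E_q=ap\). You handle the continuous factor with Lemma~\ref{lem:continuous-cumulative-substitution} and the pure-jump factor by telescoping plus dominated convergence. You then recombine with Proposition~\ref{prop:product}; the cross term \(\Deltag\,a'_gp'_g\) vanishes because \(a'_g=0\) on \(\Dg\). That step uses \(\mugc=\mathbf{1}_{[s,T)\setminus\Dg}\,\mug\), which holds because \(\Dg\) is countable and \(\mugc\) is non-atomic; you should state this explicitly. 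Your route avoids the paper's somewhat informal step ``crossing an included atom \ldots\ hence by the fundamental theorem'', at the price of leaning on the product rule.

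\textbf{Uniqueness.} The paper uses the integrating factor \(R=E_q^{-1}\). This needs \(E_q>0\), the composition theorem, and an off-atom chain rule to obtain \(R'_g=-qR^+\) and then \((Ry)'_g=0\). Its merit is that it previews exactly the mechanism later used in Theorem~\ref{thm:inverse}. Your continuation argument is more elementary and more general. Once \(D=0\) on \([s,t^*]\), the atom at \(t^*\) contributes nothing, and only \(\int_{(t^*,t^*+h)}\abs{q}\dmu\to0\) matters. It therefore works for any signed \(q\in\Lg{1}\), with no condition on \(1+q\Deltag\). The same argument, with \(\norm{A}\) in place of \(\abs{q}\), would also give the uniqueness assertions of Theorems~\ref{thm:evolution} and~\ref{thm:variation} without the \(p_k\) iteration.

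Your case split is unnecessary. The value \(q(t^*)\) at an atom is always finite, so Case~1 already covers every \(t^*<T\), and Case~2 is a special instance of it. The auxiliary quantities \(M\) and \(\phi\) are never used.
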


\begin{proof}
Enumerate the atoms of \([s,T)\) as \(\{\tau_k\}_{k\ge1}\), and define
\[
 \mug^{(N)}:=\mugc+\sum_{k=1}^N\Deltag(\tau_k)\delta_{\tau_k},
\]
\[
 E_q^{(N)}(t,s)
 :=\exp\!\left(\int_{[s,t)}q(r)\dmuc(r)\right)
   \prod_{\substack{k\le N\\ \tau_k<t}}
   \bigl(1+q(\tau_k)\Deltag(\tau_k)\bigr).
\]
To justify the continuous factor, set
\[
 Q(t):=\int_{[s,t)}q(r)\dmuc(r).
\]
Then \(Q\) is continuous and nondecreasing, and its Lebesgue--Stieltjes measure is
\(d\mu_Q=q\,d\mugc\). Applying
Lemma~\ref{lem:continuous-cumulative-substitution} with \(\psi(y)=e^y\) gives
\[
 \int_{[s,t)}e^{Q(r)}\,d\mu_Q(r)
 =\int_{[0,Q(t))}e^y\,dy=e^{Q(t)}-1.
\]
Since \(d\mu_Q=q\,d\mugc\), this is precisely
\[
 \exp\!\left(\int_{[s,t)}q\dmuc\right)
 =1+\int_{[s,t)}q(r)
   \exp\!\left(\int_{[s,r)}q\dmuc\right)\dmuc(r).
\]
This substitution formula is a standard Lebesgue--Stieltjes identity; see
\citep{Bogachev2007}. Crossing an included atom \(\tau_k\) multiplies
\(E_q^{(N)}\) by
\(1+q(\tau_k)\Deltag(\tau_k)\). Hence, by the fundamental theorem of Stieltjes
calculus,
\begin{equation}\label{eq:gexp-truncated}
 E_q^{(N)}(t,s)
 =1+\int_{[s,t)}q(r)E_q^{(N)}(r,s)\,d\mug^{(N)}(r).
\end{equation}

Since \(\sum_k q(\tau_k)\Deltag(\tau_k)<\infty\), the infinite product in
\eqref{eq:positive-gexp} converges uniformly in \(t\), and
\(E_q^{(N)}(\cdot,s)\to E_q(\cdot,s)\) uniformly. Moreover,
\[
 0\le E_q^{(N)}(t,s)\le
 \exp\!\left(\int_{[s,T)}q(r)\dmu(r)\right)=:M_q.
\]
Using \eqref{eq:gexp-truncated}, we estimate the difference between the truncated and
full integrals by
\begin{align*}
 &\left|\int qE_q^{(N)}\,d\mug^{(N)}
       -\int qE_q\,d\mug\right|\\
 &\qquad\le
 \int_{[s,T)}q(r)\abs{E_q^{(N)}(r,s)-E_q(r,s)}\dmu(r)
 +M_q\sum_{k>N}q(\tau_k)\Deltag(\tau_k),
\end{align*}
which tends to zero. Passing to the limit proves \eqref{eq:gexp-integral}.
The atomic identity follows either from the product formula or directly from
Proposition~\ref{prop:canonical}.

For uniqueness, let \(Y_1,Y_2\in\ACg([s,T])\) solve
\eqref{eq:gexp-integral} and set \(y=Y_1-Y_2\). Then
\[
 y'_g=qy\quad\mug\text{-a.e.},\qquad y(s)=0.
\]
The function \(E_q(\cdot,s)\) is strictly positive. Put
\(R(t)=E_q(t,s)^{-1}\). Since \(E_q(\cdot,s)\in\ACg([s,T])\) takes values in the
compact interval \([1,M_q]\) and \(r\mapsto r^{-1}\) is Lipschitz there, the
composition theorem for measure-absolutely-continuous functions gives
\(R\in\ACg([s,T])\). Off the atoms, the ordinary chain rule with respect to the
continuous Stieltjes variable gives \(R'_g=-qR\). At an atom \(\tau\),
\[
 R'_g(\tau)
 =\frac{E_q(\tau^+,s)^{-1}-E_q(\tau,s)^{-1}}{\Deltag(\tau)}
 =-q(\tau)R(\tau^+).
\]
Thus \(R'_g=-qR^+\) almost everywhere. By the product rule,
\[
 (Ry)'_g=R'_g y+R^+y'_g=-qR^+y+R^+qy=0.
\]
Since \(Ry\in\ACg\), its integral representation shows that it is constant; its
value at \(s\) is zero. Hence \(y=0\), which proves uniqueness without invoking the
Peano--Baker construction below.
\end{proof}

This product-integral representation is the positive-coefficient case of the
Stieltjes exponential considered in \citep{FernandezMarquezTojo2022} and
\cite[Section~5]{CoraFernandezTojo2023}. We use it as a scalar comparison function in
the construction of the evolution family.

\section{Connections with related control frameworks}\label{sec:comparison}

The objects used below have analogues in continuous-time, discrete-time, time-scale,
impulsive, and measure-driven control. We record the precise correspondence both to
locate the present results and to identify the role of the Stieltjes measure.

\paragraph{Continuous-time systems.}
If \(g(t)=t\), then \(\mug\) is Lebesgue measure, \(\Dg=\varnothing\), and
\(\Phi(t,s^+)=\Phi(t,s)\). The endpoint operator, controllability Gramian, and
minimum-energy control reduce to the classical constructions; see, for example,
\citep{Kailath1980,Sontag1998}.

\paragraph{Discrete-time systems.}
If \(\mug\) is supported by finitely many atoms, the state evolves through the
recurrence
\[
 x_{j}^{+}=(I+\delta_jA)x_j+\delta_jBu_j.
\]
The finite-horizon reachable space is therefore generated by products of the step
matrices. Section~\ref{sec:control} shows that, for constant \(A,B\) and arbitrary
positive masses, these products form a Newton-type basis of the truncated Krylov
space. This is consistent with the classical finite-horizon reachability matrix, but
it also identifies the exact role of the Stieltjes masses and their effect on the
control energy.

\paragraph{Systems on time scales.}
Let \(\mathbb T\) be a time scale, with forward jump operator \(\sigma\) and
graininess \(\mu_{\mathbb T}(r)=\sigma(r)-r\). On the convex hull of \(\mathbb T\), the
forward projection
\[
 g_{\mathbb T}(t):=\inf\{r\in\mathbb T:r\ge t\}
\]
(with the usual endpoint convention) is a nondecreasing left-continuous derivator.
Under this embedding, every right-scattered point \(r\) becomes an atom with
\[
 \Delta g_{\mathbb T}(r)=\mu_{\mathbb T}(r),
\]
whereas right-dense points do not produce atoms. Correspondingly,
\[
 \sigma(r)\longleftrightarrow r^+,
 \qquad
 \mu_{\mathbb T}(r)\longleftrightarrow\Delta g(r),
 \qquad
 \Phi(t,\sigma(s))\longleftrightarrow\Phi(t,s^+).
\]
The time-scale controllability Gramian therefore uses the same shifted input direction
as the Stieltjes Gramian; see \cite[Theorem~2.2]{DavisEtAl2009} and
\citep{BohnerPeterson2001}. At the measure level, for measurable
\(E\subset\mathbb T\setminus\{\sup\mathbb T\}\), the delta measure satisfies
\[
 \mu_\Delta(E)
 =\mathcal L^1(E)
 +\sum_{r\in E\cap RS}\mu_{\mathbb T}(r),
\]
so it consists of a Lebesgue part and graininess-weighted atoms
\citep{BourdinTrelat2013}. The Stieltjes class is broader in that it also admits
arbitrary singular-continuous components and atomic masses that are not constrained to
equal geometric gap lengths; see \citep{LopezRodriguez2015} for the Stieltjes
unification of continuous, discrete, and impulsive calculus.

The same shift is visible in optimal control. In the Pontryagin principle of
\citet{BourdinTrelat2013}, the adjoint enters the Hamiltonian through
\(p^\sigma(s)=p(\sigma(s))\), as dictated by the time-scale Leibniz rule. At
right-dense points one obtains a Hamiltonian maximization condition, while at
right-scattered points the general nonconvex problem yields only a directional
first-order condition. This dense/scattered distinction is the optimal-control
counterpart of the non-atomic/atomic separation used here.

\paragraph{Impulsive systems.}
Classical impulsive equations prescribe a continuous flow together with separate jump
maps; see \citep{LakshmikanthamBainovSimeonov1989}. In the present framework both
mechanisms are generated by the single measure \(\mug\). At an atom \(\tau\),
\[
 x(\tau^+)=(I+\Deltag(\tau)A(\tau))x(\tau)
 +\Deltag(\tau)B(\tau)u(\tau).
\]
Thus the atomic mass occurs simultaneously in the transition, the endpoint operator,
and the \(L_g^2\)-cost.

\paragraph{Measure-driven systems.}
General measure differential equations are commonly formulated for bounded-variation
solutions and vector-valued driving measures; see \citep{Schwabik1992}. Here the
driving measure is the positive scalar Lebesgue--Stieltjes measure associated with a
derivator, and solutions are taken in the canonical space \(\ACg\). This specialization
provides the exact decomposition
\[
 \mug=\mugc+\sum_{\tau\in\Dg}\Deltag(\tau)\delta_\tau
\]
and allows the endpoint map to be analysed separately on the non-atomic and atomic
parts.

A complementary line of research studies controllability for semilinear
measure-driven evolution equations in Banach spaces. Sufficient conditions for
complete controllability under nonlocal conditions were obtained by
\citet{CaoSun2017} through measures of noncompactness and the M\"onch fixed point
theorem. Related non-autonomous integrodifferential systems were considered by
\citet{NiangEtAl2025}, while impulsive systems with multivalued nonlocal inclusions
and functional control systems with nonlocal conditions were subsequently treated by
\citet{VermaDabas2026} and \citet{VermaKumarDabas2026}, respectively. These works
address broad semilinear, mostly infinite-dimensional settings and provide sufficient
conditions in terms of mild solutions, evolution or resolvent families, compactness
methods and fixed point arguments. The present finite-dimensional linear theory is
complementary: it determines the exact range of the terminal operator, gives sharp
algebraic classifications for continuous and finite-atomic derivators, identifies the
minimum number of atoms among purely atomic derivators, and quantifies the loss of
Gramian coercivity and the associated worst-case energy blow-up. No extension of
those conclusions to the aforementioned semilinear settings is asserted here.

The preceding correspondences explain why the Gramian itself is not the only issue.
The main question in Section~\ref{sec:control} is which propagated input directions are
actually visible to \(\mug\). An atom-free interval with nonzero continuous \(g\)-variation supplies a
continuum of directions, whereas \(N\) atoms supply exactly a truncated Krylov space
of order \(N\).

\section{Linear evolution families}\label{sec:linear}

Consider the linear system
\begin{equation}\label{eq:linear-system}
 x'_g(t)=A(t)x(t)+f(t),
 \qquad \mug\text{-a.e. }t\in[t_0,T),
\end{equation}
where
\[
 A\in\Lg{1}([t_0,T);\R^{n\times n}),
 \qquad
 f\in\Lg{1}([t_0,T);\R^n).
\]

\subsection{Peano--Baker--Volterra construction}

We construct the evolution family by means of a Lebesgue--Stieltjes version of the Peano--Baker series; see \citep{BaakeSchlaegel2011} for the classical construction.
For \(t_0\le s\le t\le T\), define recursively
\begin{align}
 \Phi_0(t,s)&:=\Id, \label{eq:PB0}\\
 \Phi_{k+1}(t,s)&:=
 \int_{[s,t)}A(r)\Phi_k(r,s)\dmu(r),
 \qquad k\ge0.                                     \label{eq:PBrec}
\end{align}

\begin{theorem}[Existence of the evolution family]\label{thm:evolution}
The series
\begin{equation}\label{eq:PBseries}
 \Phi(t,s):=\sum_{k=0}^\infty\Phi_k(t,s)
\end{equation}
converges uniformly in \(t\in[s,T]\). It is the unique solution of
\begin{equation}\label{eq:fundamental-volterra}
 \Phi(t,s)
 =
 \Id+\int_{[s,t)}A(r)\Phi(r,s)\dmu(r).
\end{equation}
The following properties hold.

\begin{enumerate}[label=\textnormal{(\roman*)}]
\item \(\Phi(\cdot,s)\in\ACg([s,T];\R^{n\times n})\) and
\[
 \partial_g\Phi(t,s)=A(t)\Phi(t,s)
 \quad\mug\text{-a.e.};
\]

\item
\begin{equation}\label{eq:evolution-bound}
 \norm{\Phi(t,s)}
 \le E_{\norm{A}}(t,s)
 \le
 \exp\!\left(\int_{[s,t)}\norm{A(r)}\dmu(r)\right);
\end{equation}

\item for \(s\le r\le t\),
\begin{equation}\label{eq:cocycle}
 \Phi(t,r)\Phi(r,s)=\Phi(t,s);
\end{equation}

\item at every \(\tau\in[s,T)\cap\Dg\),
\begin{equation}\label{eq:matrix-jump}
 \Phi(\tau^+,s)
 =
 \bigl(\Id+\Deltag(\tau)A(\tau)\bigr)\Phi(\tau,s).
\end{equation}
\end{enumerate}
Consequently, \(\Phi\) is uniformly bounded on
\(\{(t,s):t_0\le s\le t\le T\}\).
\end{theorem}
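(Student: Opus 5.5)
The plan is to dominate the Peano--Baker terms by the corresponding terms of the scalar comparison equation of Proposition~\ref{prop:gexp} with \(q=\norm{A}\). Define scalar iterates \(e_0:=1\) and
\[
 e_{k+1}(t,s):=\int_{[s,t)}\norm{A(r)}e_k(r,s)\dmu(r).
\]
An induction on \(k\) gives \(\norm{\Phi_k(t,s)}\le e_k(t,s)\). The \(e_k\) are nonnegative Picard iterates for \eqref{eq:gexp-integral}. Also by induction, \(\sum_{k\le m}e_k\le E_{\norm{A}}(\cdot,s)\): if the partial sum up to \(m\) is bounded by \(E\), then
\[
 1+\int\norm{A}\sum_{k\le m}e_k\dmu\le 1+\int\norm{A}E\dmu=E.
\]
Hence \(\sum_k e_k(t,s)\le E_{\norm{A}}(t,s)\le M\), where \(M:=\exp\int_{[t_0,T)}\norm{A}\dmu\) by \eqref{eq:gexp-bound}.

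The partial sums are thus absolutely bounded, but uniform convergence needs a decaying tail. For this I use the classical factorial estimate
\[
 e_k(t,s)\le \frac{1}{k!}\Bigl(\int_{[s,t)}\norm{A}\dmu\Bigr)^k,
\]
and I expect its proof to be the main obstacle. With atoms, the continuous-case identity \(\int F^k\,dF=F^{k+1}/(k+1)\) becomes only an inequality, and one must check that it points the right way. Put \(F(r):=\int_{[s,r)}\norm{A}\dmu\), which is nondecreasing and left-continuous. Since \(F(r)\le F(t')\) for \(r\in[r',t')\),
\[
 \int_{[s,t)}F(r)^k\,dF(r)\le \frac{F(t)^{k+1}}{k+1}.
\]
This holds because the integrand is evaluated at left endpoints. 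One can approximate by Riemann--Stieltjes sums with left endpoints, using \(a^k(b-a)\le (b^{k+1}-a^{k+1})/(k+1)\), or argue directly through the distribution function. This inequality yields the factorial bound by induction. Then \(\sup_t\norm{\Phi_k(t,s)}\le M_0^k/k!\) with \(M_0:=\int_{[t_0,T)}\norm{A}\dmu\), and the Weierstrass test gives uniform convergence, even uniformly in \(s\).

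Next I sum the recursion \eqref{eq:PBrec}. Dominated convergence applies with dominating function \(\norm{A}M\in L_g^1\), and this yields \eqref{eq:fundamental-volterra}. Since \(A\Phi(\cdot,s)\in L_g^1\), Definition~\ref{def:acg} gives (i). The bound (ii) follows from \(\norm{\Phi}\le\sum_k e_k\le E_{\norm{A}}\) and \eqref{eq:gexp-bound}. Item (iv) is Proposition~\ref{prop:canonical} applied to \(\Phi(\cdot,s)\) together with (i).

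For uniqueness, let \(\Psi\) be another bounded \(\mug\)-measurable solution of \eqref{eq:fundamental-volterra}, and set \(D:=\Psi-\Phi\). Then \(\norm{D(t)}\le\int_{[s,t)}\norm{A}\norm{D}\dmu\). Iterating this with the same factorial estimate gives \(\norm{D}\le \sup\norm{D}\,M_0^k/k!\to0\). Alternatively, one can use the integrating-factor argument of Proposition~\ref{prop:gexp} with \(y=\norm{D}\) dominated.

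For the cocycle (iii), fix \(r\) and consider \(t\mapsto\Phi(t,r)\Phi(r,s)\) on \([r,T]\). By \eqref{eq:fundamental-volterra} for \(\Phi(\cdot,r)\), this function equals
\[
 \Phi(r,s)+\int_{[r,t)}A(\rho)\Phi(\rho,r)\Phi(r,s)\dmu(\rho).
\]
The function \(t\mapsto\Phi(t,s)\) satisfies the same Volterra equation on \([r,T]\) with the same initial value \(\Phi(r,s)\), obtained by splitting \([s,t)=[s,r)\cup[r,t)\). The uniqueness argument above holds for an arbitrary initial matrix, so the two functions coincide. The uniform boundedness claimed at the end of the theorem follows from (ii) with the constant \(M\).
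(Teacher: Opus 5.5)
Your proof is correct and has the same structure as the paper's: majorization of \(\Phi_k\) by scalar iterates, comparison with \(E_{\norm{A}}\), dominated convergence, uniqueness by iterating the Volterra inequality, the cocycle by uniqueness, and the jump law from Proposition~\ref{prop:canonical}. The one real difference is your factorial estimate, and it is unnecessary.

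The paper simply uses what you have already proved. Each \(e_k(t,s)\) is nondecreasing in \(t\), so \(\sup_{t\in[s,T]}\norm{\Phi_k(t,s)}\le e_k(T,s)\). The bound \(\sum_k e_k(T,s)\le E_{\norm{A}}(T,s)<\infty\) is then exactly a summable majorant for the Weierstrass test. In the uniqueness step, \(d\le(\sup d)\,e_k\), and \(e_k(T,s)\to0\) because that series converges. So your remark that bounded partial sums do not yet give a decaying tail is mistaken: the terms are nonnegative and their series converges, so the tail decays automatically.

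What the factorial bound buys is an explicit rate \(M_0^k/k!\), uniform in \(s\), at the price of a delicate step. The inequality \(\int_{[s,t)}F^k\,dF\le F(t)^{k+1}/(k+1)\) is true. The half-open interval in \(F(r)=\int_{[s,r)}\norm{A}\dmu\) is indeed what makes it survive atoms. Your justification, however, is loose:
\begin{itemize}
\item The bound \(F(r)\le F(t')\) on \([r',t')\) produces right-endpoint upper sums. These exceed \(F(t)^{k+1}/(k+1)\) and so prove nothing.
\item Left-endpoint sums are only lower bounds for the integral. You also need the left-endpoint step functions to converge pointwise to \(F^k\), which is where left-continuity of \(F\) enters.
\end{itemize}
A short alternative: with \(\nu=\norm{A}\mug\), the integral \(\int_{[s,t)}F^k\,d\nu\) is the \(\nu^{\otimes(k+1)}\)-measure of the set where the first coordinate strictly exceeds the others. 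The \(k+1\) analogous sets are disjoint and have equal measure, so \((k+1)\int_{[s,t)}F^k\,d\nu\le F(t)^{k+1}\).
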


\begin{proof}
Fix \(s\in[t_0,T]\), and put \(a(t)=\norm{A(t)}\). Define the scalar Volterra
iterates
\[
 p_0(t,s)=1,\qquad
 p_{k+1}(t,s)=\int_{[s,t)}a(r)p_k(r,s)\dmu(r).
\]
We first prove by induction that
\begin{equation}\label{eq:PB-majorization}
 \norm{\Phi_k(t,s)}\le p_k(t,s),
 \qquad k\ge0.
\end{equation}
The assertion is clear for \(k=0\). If it holds for \(k\), then
\[
 \norm{\Phi_{k+1}(t,s)}
 \le\int_{[s,t)}a(r)\norm{\Phi_k(r,s)}\dmu(r)
 \le p_{k+1}(t,s).
\]

The partial sums \(P_N=\sum_{k=0}^Np_k\) increase pointwise. We first show
that they are bounded by \(E_a\). From
\[
 E_a(t,s)=1+\int_{[s,t)}a(r)E_a(r,s)\dmu(r)
\]
and positivity, one obtains successively
\[
 E_a(t,s)\ge p_0(t,s)+p_1(t,s),
\]
and, by substituting the same integral identity in the nonnegative remainder,
\[
 E_a(t,s)\ge\sum_{k=0}^{N}p_k(t,s)=P_N(t,s)
 \qquad (N\ge0).
\]
Therefore \(P_N\uparrow P\le E_a<\infty\). Monotone convergence now gives
\[
 P(t,s)=1+\int_{[s,t)}a(r)P(r,s)\dmu(r).
\]
The uniqueness part of Proposition~\ref{prop:gexp} yields \(P=E_a\). In particular,
\[
 \sum_{k=0}^{\infty}p_k(T,s)=E_a(T,s)<\infty.
\]
Since \(p_k(t,s)\le p_k(T,s)\), the Weierstrass test and
\eqref{eq:PB-majorization} imply uniform convergence of
\(\sum_k\Phi_k(\cdot,s)\) on \([s,T]\). Denote the limit by \(\Phi(\cdot,s)\).
The uniform estimate
\[
 \norm{\Phi(t,s)}
 \le\sum_{k=0}^{\infty}p_k(t,s)=E_a(t,s)
\]
proves the first inequality in \eqref{eq:evolution-bound}; the second is
\eqref{eq:gexp-bound}.

Each \(\Phi_k(\cdot,s)\) is measurable. Furthermore,
\[
 \norm{A(r)\Phi_k(r,s)}\le a(r)p_k(r,s),
\]
and
\[
 \sum_{k=0}^{\infty}a(r)p_k(r,s)=a(r)E_a(r,s)
 \le a(r)E_a(T,s)\in L_g^1.
\]
Dominated convergence therefore allows summation under the integral:
\begin{align*}
 \Id+\int_{[s,t)}A(r)\Phi(r,s)\dmu(r)
 &=\Id+\sum_{k=0}^{\infty}
   \int_{[s,t)}A(r)\Phi_k(r,s)\dmu(r)\\
 &=\sum_{k=0}^{\infty}\Phi_k(t,s)=\Phi(t,s).
\end{align*}
Thus \(\Phi\) satisfies \eqref{eq:fundamental-volterra}. The fundamental theorem of
Stieltjes calculus, in the form of Definition~\ref{def:acg}, now gives
\(\Phi(\cdot,s)\in\ACg\) and
\(\partial_g\Phi=A\Phi\) almost everywhere.

To prove uniqueness, let \(X,Y\) solve \eqref{eq:fundamental-volterra} and put
\(d(t)=\norm{X(t)-Y(t)}\). Then
\[
 d(t)\le\int_{[s,t)}a(r)d(r)\dmu(r).
\]
Since \(X-Y\) is bounded, let \(M=\sup_{[s,T]}d\). Repeated substitution yields
\[
 d(t)\le M p_k(t,s),\qquad k\ge1.
\]
Because \(\sum_kp_k(T,s)<\infty\), one has \(p_k(t,s)\to0\), and hence
\(d(t)=0\). This gives uniqueness without any additional regularity of \(A\).

For the cocycle identity, fix \(s\le r\). On \([r,T]\), the two functions
\[
 t\longmapsto\Phi(t,s),
 \qquad
 t\longmapsto\Phi(t,r)\Phi(r,s)
\]
satisfy the same Volterra equation and take the same value \(\Phi(r,s)\) at \(t=r\).
Uniqueness gives \eqref{eq:cocycle}. Finally, at
\(\tau\in[s,T)\cap\Dg\), Proposition~\ref{prop:canonical} applied to
\(\partial_g\Phi=A\Phi\) gives
\[
 \Phi(\tau^+,s)-\Phi(\tau,s)
 =\Deltag(\tau)A(\tau)\Phi(\tau,s),
\]
which is \eqref{eq:matrix-jump}. The uniform bound on the time triangle follows from
\[
 E_a(t,s)\le \exp\!\left(\int_{[t_0,T)}a(r)\dmu(r)\right).
\]
\end{proof}

\subsection{Regressivity and inverse evolution}

\begin{definition}\label{def:regressive}
The coefficient \(A\) is called \(g\)-regressive on \([t_0,T)\) if
\begin{equation}\label{eq:regressivity}
 \det\bigl(\Id+\Deltag(t)A(t)\bigr)\ne0,
 \qquad t\in[t_0,T)\cap\Dg.
\end{equation}
\end{definition}

Regressivity is not needed for forward existence; it is precisely the condition that makes each atomic transition invertible. Since every atom has positive
\(\mug\)-measure, the value of an \(L_g^1\)-coefficient at an atom is fixed by its
equivalence class; hence \eqref{eq:regressivity} is representative independent.

For a regressive \(A\), define on \([t_0,T)\) the regressive inverse coefficient
\begin{equation}\label{eq:circleminus}
 A^{\ominus}(t):=
 \begin{cases}
 -A(t),&t\notin\Dg,\\[1mm]
 -A(t)\bigl(\Id+\Deltag(t)A(t)\bigr)^{-1},&t\in\Dg.
 \end{cases}
\end{equation}

\begin{lemma}\label{lem:circleminus-L1}
If \(A\in\Lg{1}\) is \(g\)-regressive, then \(A^\ominus\in\Lg{1}\).
\end{lemma}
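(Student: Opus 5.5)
We split \([t_0,T)\) into the non-atomic set \([t_0,T)\setminus\Dg\) and the atom set \(\Dg\cap[t_0,T)\), and estimate \(\int\norm{A^\ominus}\dmu\) on each piece. On the non-atomic part \(A^\ominus=-A\), so that contribution is at most \(\int\norm{A}\dmu<\infty\). Measurability of \(A^\ominus\) is immediate. It is a Borel modification of \(-A\) on the countable Borel set \(\Dg\cap[t_0,T)\), and on that set it is a countable family of prescribed matrices. Thus only the atomic sum
\[
 \sum_{\tau\in\Dg\cap[t_0,T)}\Deltag(\tau)\norm{A(\tau)\bigl(\Id+\Deltag(\tau)A(\tau)\bigr)^{-1}}
\]
needs to be controlled.

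For the atomic sum we use the summability \(\sum_\tau\Deltag(\tau)\norm{A(\tau)}\le\int_{[t_0,T)}\norm{A}\dmu<\infty\). It forces \(\Deltag(\tau)\norm{A(\tau)}\le\frac12\) for all but finitely many atoms: otherwise infinitely many terms would each be at least \(\frac12\) and the sum would diverge. For each such atom the Neumann series gives
\[
 \norm{\bigl(\Id+\Deltag(\tau)A(\tau)\bigr)^{-1}}\le\frac{1}{1-\Deltag(\tau)\norm{A(\tau)}}\le2.
\]
Hence these terms are bounded by \(2\Deltag(\tau)\norm{A(\tau)}\), which is summable.

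The finitely many remaining atoms are handled by regressivity. Each \(\Id+\Deltag(\tau)A(\tau)\) is invertible, so each such term is finite, and a finite sum of finite terms is finite. Adding the two parts gives \(\int_{[t_0,T)}\norm{A^\ominus}\dmu\le 2\int\norm{A}\dmu+C<\infty\), where \(C\) is the finite exceptional contribution.

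The main point needing care is the one that makes this work: regressivity alone gives invertibility at each atom but no uniform bound on the inverses. The uniformity comes from the fact that the atom masses weighted by \(\norm{A}\) are summable, so all but finitely many atomic transitions are small perturbations of the identity. Representative independence at atoms was noted after Definition~\ref{def:regressive}, so the pointwise values used above are well defined.
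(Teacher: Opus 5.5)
Your proposal is correct and follows essentially the same route as the paper. You use $A^\ominus=-A$ off the atoms, use summability of $\Deltag(\tau)\norm{A(\tau)}$ so that all but finitely many atoms fall in the Neumann-series regime (inverse norm at most $2$), and handle the finite exceptional set by regressivity; your added remark on the measurability of $A^\ominus$ supplies a detail the paper leaves implicit.
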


\begin{proof}
On the non-atomic part of \(\mug\), \(A^\ominus=-A\), so there is nothing to
prove. At an atom \(\tau\), set
\(\eta_\tau=\Deltag(\tau)\norm{A(\tau)}\). Since
\[
 \sum_{\tau\in\Dg\cap[t_0,T)}\eta_\tau
 =\int_{\Dg\cap[t_0,T)}\norm{A(t)}\dmu(t)
 \le\norm{A}_{L_g^1}<\infty,
\]
for each \(\varepsilon>0\) only finitely many atoms satisfy
\(\eta_\tau\ge\varepsilon\). Choose \(\varepsilon=1/2\). If
\(\eta_\tau<1/2\), the Neumann series gives
\[
 \norm{(I+\Deltag(\tau)A(\tau))^{-1}}
 \le\frac1{1-\eta_\tau}\le2,
\]
and hence
\[
 \Deltag(\tau)\norm{A^\ominus(\tau)}
 \le2\Deltag(\tau)\norm{A(\tau)}.
\]
The atoms with \(\eta_\tau\ge1/2\) form a finite set. At each of them regressivity
ensures that the inverse matrix exists and has finite norm, so their total contribution
to \(\int\norm{A^\ominus}\dmu\) is finite. Combining the non-atomic part, the small
atoms, and the finite exceptional set proves \(A^\ominus\in L_g^1\).
\end{proof}

\begin{theorem}[Inverse evolution]\label{thm:inverse}
If \(A\) is \(g\)-regressive, then \(\Phi(t,s)\) is invertible for all
\(t_0\le s\le t\le T\). Its inverse \(Z(t,s)=\Phi(t,s)^{-1}\) is the unique solution
of
\begin{equation}\label{eq:inverse-eq}
 Z'_g(t,s)=Z(t,s)A^\ominus(t),\qquad Z(s,s)=\Id,
\end{equation}
and satisfies
\begin{align}
 Z'_g(t,s)&=-Z(t^+,s)A(t),                     \label{eq:inverse-right}\\
 Z(t^+,s)&=
 Z(t,s)\bigl(\Id+\Deltag(t)A(t)\bigr)^{-1}.
                                                        \label{eq:inverse-jump}
\end{align}
Moreover,
\begin{equation}\label{eq:inverse-bound}
 \norm{\Phi(t,s)^{-1}}
 \le E_{\norm{A^\ominus}}(t,s)
 \le \exp\!\left(\int_{[s,t)}\norm{A^\ominus(r)}\dmu(r)\right).
\end{equation}
\end{theorem}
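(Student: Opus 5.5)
The plan is to build \(Z\) directly as a left Peano--Baker solution of \eqref{eq:inverse-eq}, and then show that it inverts \(\Phi\) using the product rule. By Lemma~\ref{lem:circleminus-L1}, \(A^\ominus\in L_g^1\). Repeating the construction of Theorem~\ref{thm:evolution} with right multiplication gives a unique \(Z(\cdot,s)\in\ACg([s,T])\). Concretely, the recursion is \(Z_{k+1}(t,s)=\int_{[s,t)}Z_k(r,s)A^\ominus(r)\dmu(r)\). The scalar majorants \(p_k\) built from \(\norm{A^\ominus}\) are the same as before, so the same arguments give:
\begin{itemize}
\item uniform convergence of the series;
\item uniqueness of the solution;
\item the bound \(\norm{Z(t,s)}\le E_{\norm{A^\ominus}}(t,s)\), whose second inequality is \eqref{eq:gexp-bound}.
\end{itemize}
Once \(Z=\Phi^{-1}\) is established, this bound is exactly \eqref{eq:inverse-bound}.

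Next I derive the jump and right-derivative identities. At an atom \(t\), Proposition~\ref{prop:canonical} gives
\[
Z(t^+,s)=Z(t,s)\bigl(\Id+\Deltag(t)A^\ominus(t)\bigr).
\]
A direct computation shows
\[
\Id-\Deltag A(\Id+\Deltag A)^{-1}=(\Id+\Deltag A)^{-1},
\]
which yields \eqref{eq:inverse-jump}. For \eqref{eq:inverse-right} I argue separately on and off the atoms:
\begin{itemize}
\item Off the atoms, \(\Deltag=0\) and \(Z^+=Z\), so \(Z'_g=-ZA=-Z^+A\).
\item At an atom, \(Z'_g=-Z(\Id+\Deltag A)^{-1}A=-Z^+A\) by \eqref{eq:inverse-jump}.
\end{itemize}
This holds \(\mug\)-a.e., and since every atom has positive measure the identity holds pointwise there.

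The key step is to show \(Z(t,s)\Phi(t,s)=\Id\). I apply the product rule, Proposition~\ref{prop:product}, in the form \eqref{eq:product-right} with \(U=Z\) and \(V=\Phi\):
\[
(Z\Phi)'_g=Z'_g\Phi+Z^+\Phi'_g=-Z^+A\Phi+Z^+A\Phi=0 .
\]
Hence \(Z\Phi\) is constant by its integral representation, and it equals \(\Id\) at \(t=s\). In finite dimensions a left inverse is a two-sided inverse, so \(\Phi(t,s)\) is invertible with \(\Phi(t,s)^{-1}=Z(t,s)\). Uniqueness for \eqref{eq:inverse-eq} comes from the Volterra uniqueness argument above. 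Note that it is the \(U^+\) version of the product rule that makes the cancellation exact; this is precisely why the paper defines \(A^\ominus\) with the factor \((\Id+\Deltag A)^{-1}\).

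The main obstacle I anticipate is making sure the right-multiplied Peano--Baker construction really inherits every conclusion of Theorem~\ref{thm:evolution}, in particular the identity \(\sum_k p_k=E_q\). That identity depends only on the scalar majorant \(q=\norm{A^\ominus}\in L_g^1\), which Lemma~\ref{lem:circleminus-L1} guarantees. So I would state explicitly that the proof carries over verbatim after transposing, since \(Z^{\mathsf T}\) solves a left equation with coefficient \((A^\ominus)^{\mathsf T}\). This also gives the \(\ACg\) regularity needed to apply the product rule.
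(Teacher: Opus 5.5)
Your proposal is correct and follows the paper's proof essentially step for step. Both arguments transpose in order to invoke Theorem~\ref{thm:evolution} for \(Z^\top\), then use the canonical jump law together with \(\Id+\Deltag A^\ominus=(\Id+\Deltag A)^{-1}\), then apply the \(U^+\) form of the product rule to get \(Z\Phi=\Id\), and finally take the bound from the evolution estimate. The one point to state explicitly is at an atom: rewriting \(ZA^\ominus=-ZA(\Id+\Deltag A)^{-1}\) as \(-Z(\Id+\Deltag A)^{-1}A\) requires that \(A\) commutes with \((\Id+\Deltag A)^{-1}\). The paper says this outright, whereas you use it without comment.
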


\begin{proof}
By Lemma~\ref{lem:circleminus-L1}, \(A^\ominus\in L_g^1\). Applying
Theorem~\ref{thm:evolution} to the transposed left-matrix equation
\[
 (Z^\top)'_g=(A^\ominus)^\top Z^\top,
 \qquad Z^\top(s,s)=I,
\]
produces a unique \(Z\in\ACg([s,T];\R^{n\times n})\) satisfying
\eqref{eq:inverse-eq}. At an atom \(t\), Proposition~\ref{prop:canonical} gives
\[
 Z(t^+,s)=Z(t,s)\bigl(I+\Deltag(t)A^\ominus(t)\bigr).
\]
The algebraic identity
\begin{equation}\label{eq:regressive-inverse-identity}
 I+\Deltag(t)A^\ominus(t)
 =\bigl(I+\Deltag(t)A(t)\bigr)^{-1}
\end{equation}
follows directly from \eqref{eq:circleminus}: at an atom,
\[
 I-\Deltag A(I+\Deltag A)^{-1}
 =[(I+\Deltag A)-\Deltag A](I+\Deltag A)^{-1}.
\]
This proves \eqref{eq:inverse-jump}. Moreover, because \(A\) commutes with every
polynomial and inverse polynomial in \(A\),
\[
 Z(t,s)A^\ominus(t)
 =-Z(t,s)A(t)(I+\Deltag(t)A(t))^{-1}
 =-Z(t^+,s)A(t)
\]
at an atom; off \(\Dg\) the same identity reduces to \(Z'_g=-ZA\). Hence
\eqref{eq:inverse-right} holds almost everywhere.

We now use the Stieltjes product rule. Since \(\Phi'_g=A\Phi\),
\[
 (Z\Phi)'_g
 =Z'_g\Phi+Z^+\Phi'_g
 =-Z^+A\Phi+Z^+A\Phi=0
 \quad\mug\text{-a.e.}
\]
The product belongs to \(\ACg\), so its integral representation implies that it is
constant. At \(t=s\) it equals \(I\); therefore
\[
 Z(t,s)\Phi(t,s)=I.
\]
A square matrix with a left inverse is invertible, and the left inverse is its inverse.
Thus \(Z(t,s)=\Phi(t,s)^{-1}\). Finally, applying the evolution estimate
\eqref{eq:evolution-bound} to the equation for \(Z^\top\) gives
\eqref{eq:inverse-bound}.
\end{proof}

For \(s<t\), we denote by \(\Phi(t,s^+)\) the propagation from the state immediately after a possible atom at \(s\):
\begin{equation}\label{eq:postjump}
 \Phi(t,s^+):=
 \begin{cases}
 \Phi(t,s)\bigl(\Id+\Deltag(s)A(s)\bigr)^{-1},&s\in\Dg,\\
 \Phi(t,s),&s\notin\Dg.
 \end{cases}
\end{equation}
Equivalently,
\[
 \Phi(t,s^+)
 =
 \Phi(t,t_0)\Phi(s^+,t_0)^{-1}.
\]

\begin{proposition}[Measurability of the propagators]\label{prop:propagator-measurable}
Assume that \(A\) is \(g\)-regressive. Then the maps
\[
 (t,s)\longmapsto \Phi(t,s),\qquad t_0\le s\le t\le T,
\]
and
\[
 (t,s)\longmapsto \Phi(t,s^+),\qquad t_0\le s<t\le T,
\]
are Borel measurable on their respective domains. In particular, for every fixed
\(t\), the maps \(s\mapsto\Phi(t,s)\) on \([t_0,t]\) and
\(s\mapsto\Phi(t,s^+)\) on \([t_0,t)\) are \(\mug\)-measurable and bounded.
\end{proposition}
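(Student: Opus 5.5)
The plan is to reduce joint measurability to one-variable facts through the cocycle factorisation. By Theorem~\ref{thm:evolution}(iii) and Theorem~\ref{thm:inverse},
\[
 \Phi(t,s)=\Phi(t,t_0)\Phi(s,t_0)^{-1},\qquad t_0\le s\le t\le T .
\]
Set \(F(t):=\Phi(t,t_0)\). Then \(\Phi(t,s)=F(t)F(s)^{-1}\) on the triangle, and \(\Phi(t,s^+)=F(t)F^+(s)^{-1}\) on \(\{s<t\}\). Here \(F^+(s)\) equals \(F(s^+)=(I+\Deltag(s)A(s))F(s)\) for \(s\in\Dg\), by \eqref{eq:matrix-jump}, and \(F(s)\) otherwise. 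The two displayed formulas agree with \eqref{eq:postjump}. Since products, inverses on invertible matrices and compositions with the coordinate projections \((t,s)\mapsto t\), \((t,s)\mapsto s\) preserve Borel measurability, it suffices to show that \(F\), \(F^{-1}\) and \(F^+\) are Borel functions of one variable on \([t_0,T]\), respectively \([t_0,T)\).

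\textbf{Step 1: \(F\) is Borel.} By Definition~\ref{def:acg} and Theorem~\ref{thm:evolution}(i), \(F(t)=I+\int_{[t_0,t)}A(r)F(r)\dmu(r)\). Each entry is therefore a difference of functions of the form \(t\mapsto\int_{[t_0,t)}h\dmu\) with \(h\ge0\) in \(L_g^1\), and such functions are nondecreasing, hence Borel. Equivalently, \(F\) has bounded variation and is left-continuous, by dominated convergence on \([t_0,t)\) as \(t\) increases. Since inversion is continuous on \(GL_n\), \(F^{-1}\) is Borel.

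\textbf{Step 2: \(F^+\) is Borel.} This is the only delicate point, because \(\Dg\) is merely countable and \(A\) is only an \(L_g^1\) class. Rather than working with \(A(s)\), I will use the right limit directly: \(F(s^+)\) exists at every \(s\) by Proposition~\ref{prop:canonical}, and \(F(s^+)=F(s)\) whenever \(s\notin\Dg\). The latter holds because the increment over \([s,r)\) tends to \(\int_{\{s\}}AF\dmu=0\). Hence \(F^+(s)=\lim_{k\to\infty}F(s+1/k)\) pointwise, with \(s+1/k\) truncated at \(T\). This is a pointwise limit of Borel functions, hence Borel. Alternatively, \(F^+=F\) off the countable Borel set \(\Dg\), so \(F^+\) is Borel regardless of its values on \(\Dg\).

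Invertibility of \(F^+(s)\) follows from regressivity. The inverse \(F^+(s)^{-1}=F(s)^{-1}(I+\Deltag(s)A(s))^{-1}\) is again Borel, because inversion is continuous on \(GL_n\).

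\textbf{Step 3: conclude.} The domains \(\{t_0\le s\le t\le T\}\) and \(\{t_0\le s<t\le T\}\) are Borel, so the two propagators are Borel there. For fixed \(t\), the sections \(s\mapsto\Phi(t,s)\) and \(s\mapsto\Phi(t,s^+)\) of Borel maps are Borel, hence \(\mug\)-measurable.

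Boundedness of \(\Phi(t,s)\) is Theorem~\ref{thm:evolution}. For \(\Phi(t,s^+)\), note that \(\Phi(t,s^+)=\Phi(t,s)\,(I+\Deltag(s)A(s))^{-1}=\Phi(t,s)\,(I+\Deltag(s)A^\ominus(s))\) by \eqref{eq:regressive-inverse-identity}. I will bound this by \(\norm{F(t)}\,\sup_s\norm{F^+(s)^{-1}}\), where \(F^+(s)^{-1}=Z(s^+,t_0)\). This is finite because \(Z\) is bounded by \eqref{eq:inverse-bound}, and the bound passes to the right limits \(Z(s^+,t_0)\).
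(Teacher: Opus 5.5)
Your proof is correct and follows the paper's route: the cocycle factorisation \(\Phi(t,s)=F(t)F(s)^{-1}\), \(\Phi(t,s^+)=F(t)F^+(s)^{-1}\) reduces joint Borel measurability to Borel measurability of functions of one variable, inverses are handled by continuity of inversion on \(\operatorname{GL}(n,\R)\), and boundedness comes from \eqref{eq:evolution-bound} and \eqref{eq:inverse-bound}. The only differences are minor. You obtain Borel measurability of \(F^+\) from the right limits \(F(s+1/k)\), or from \(F^+=F\) off the countable set \(\Dg\), rather than from a Borel representative of \(A\) and the measurability of \(\Deltag\); this avoids choosing a representative. You also bound \(Z(s^+,t_0)\) by passing the uniform bound on \(Z\) to right limits rather than through its integral representation.
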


\begin{proof}
Because the restriction of \(\mug\) to \([t_0,T)\) is the completion of a finite
Borel measure, the class of \(A\) admits a Borel representative. We choose such a representative, preserving the
uniquely determined values at atoms. The map \(t\mapsto\Deltag(t)\) is Borel
measurable (indeed regulated); see
\cite[Proposition~3.1]{FernandezMarquezTojoProduct2025}.

The function \(t\mapsto\Phi(t,t_0)\) belongs to \(\ACg\), and hence is
\(g\)-continuous and Borel measurable. For \(t<T\), its right-value representative is
\[
 \Phi(t^+,t_0)=(I+\Deltag(t)A(t))\Phi(t,t_0),
\]
so it is also Borel measurable. By Theorem~\ref{thm:inverse}, both matrices are
invertible. Since inversion is continuous on \(\operatorname{GL}(n,\R)\), the maps
\[
 t\mapsto\Phi(t,t_0)^{-1},
 \qquad
 t\mapsto\Phi(t^+,t_0)^{-1}
\]
are Borel measurable on \([t_0,T]\) and \([t_0,T)\), respectively.

For \(t_0\le s\le t\le T\), the cocycle identity gives
\[
 \Phi(t,s)=\Phi(t,t_0)\Phi(s,t_0)^{-1},
\]
whereas, for \(s<t\), the definition of post-jump propagation gives
\[
 \Phi(t,s^+)=\Phi(t,t_0)\Phi(s^+,t_0)^{-1}.
\]
Both are products of functions measurable in separate variables, and hence jointly
Borel measurable on their respective domains. Their boundedness follows from
\eqref{eq:evolution-bound} and \eqref{eq:inverse-bound}. To make the bound for the
right-value inverse explicit, write \(Z(r)=\Phi(r,t_0)^{-1}\). Since
\(Z\in\ACg(I)\), Proposition~\ref{prop:canonical} and the integral representation give
\[
 \sup_{r\in[t_0,T)}\norm{Z(r^+)}
 \le \norm{I}+\int_{[t_0,T)}\norm{Z'_g(\rho)}\,d\mug(\rho)<\infty.
\]
Consequently,
\[
 \norm{\Phi(t,s^+)}
 \le \sup_{r\in I}\norm{\Phi(r,t_0)}
      \sup_{r\in[t_0,T)}\norm{\Phi(r^+,t_0)^{-1}}<\infty.
\]
\end{proof}

\subsection{Constant coefficients}

Assume now that \(A(t)\equiv A\in\R^{n\times n}\).

\begin{theorem}[Matrix \(g\)-exponential]\label{thm:constant}
For \(t_0\le s\le t\le T\),
\begin{equation}\label{eq:constant-formula}
 \Phi_A(t,s)
 =
 \exp\!\bigl(A\mugc([s,t))\bigr)
 \prod_{\tau\in[s,t)\cap\Dg}
 \bigl(\Id+\Deltag(\tau)A\bigr).
\end{equation}
The product converges in matrix norm and its value is independent of the order of the
factors. If \(A\) is \(g\)-regressive, then
\begin{equation}\label{eq:constant-inverse}
 \Phi_A(t,s)^{-1}
 =
 \left[
 \prod_{\tau\in[s,t)\cap\Dg}
 \bigl(\Id+\Deltag(\tau)A\bigr)^{-1}
 \right]
 \exp\!\bigl(-A\mugc([s,t))\bigr).
\end{equation}
\end{theorem}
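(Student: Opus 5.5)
Since all factors are polynomials or power series in the single matrix \(A\), they commute. This gives order-independence of the product and lets us mimic the scalar proof of Proposition~\ref{prop:gexp}. The plan is to verify that the right-hand side of \eqref{eq:constant-formula}, call it \(\Psi(t,s)\), belongs to \(\ACg([s,T])\) and satisfies the Volterra equation \eqref{eq:fundamental-volterra} with \(A(r)\equiv A\). The uniqueness part of Theorem~\ref{thm:evolution} then gives \(\Psi=\Phi_A\).

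\textbf{Convergence of the product.} Enumerate the atoms in \([s,T)\) as \(\tau_k\) and write \(\delta_k=\Deltag(\tau_k)\). Since \(\norm{I+\delta_kA}\le 1+\delta_k\norm{A}\) and \(\sum_k\delta_k\le g(T)-g(s)\), the partial products form a Cauchy sequence. Indeed,
\[
\Bigl\|\prod_{k\le M}(I+\delta_kA)-\prod_{k\le N}(I+\delta_kA)\Bigr\|\le e^{\norm{A}(g(T)-g(s))}\bigl(e^{\norm{A}\sum_{N<k\le M}\delta_k}-1\bigr),
\]
which follows by expanding and using commutativity. The limit is uniform in \(t\) and does not depend on the ordering, because any two enumerations yield partial products that agree up to factors with small tail mass. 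Commutativity is what permits the reordering.

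\textbf{Verification of the equation.} Follow the truncation scheme of Proposition~\ref{prop:gexp}. Define \(\mug^{(N)}\) and \(\Psi^{(N)}(t,s)=\exp(A\mugc([s,t)))\prod_{k\le N,\tau_k<t}(I+\delta_kA)\). The continuous factor satisfies
\[
e^{AQ(t)}=I+\int_{[s,t)}Ae^{AQ(r)}\dmuc(r),\qquad Q(t)=\mugc([s,t)).
\]
This follows from Lemma~\ref{lem:continuous-cumulative-substitution}, applied entrywise with \(\psi(y)=Ae^{Ay}\), since \(Q\) is continuous and nondecreasing and \(\int_0^{Q(t)}Ae^{Ay}dy=e^{AQ(t)}-I\). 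Between consecutive included atoms, \(\Psi^{(N)}\) is the continuous factor times a constant matrix that commutes with \(A\). Crossing \(\tau_k\) multiplies it by \(I+\delta_kA\), so the jump equals \(\delta_kA\Psi^{(N)}(\tau_k)\). Together these give
\[
\Psi^{(N)}(t,s)=I+\int_{[s,t)}A\Psi^{(N)}(r,s)\,d\mug^{(N)}(r).
\]
Pass to the limit \(N\to\infty\) exactly as in Proposition~\ref{prop:gexp}. The bound \(\norm{\Psi^{(N)}}\le e^{\norm{A}(g(T)-g(s))}\) and uniform convergence control the integral, while the removed atoms contribute at most a constant times \(\sum_{k>N}\delta_k\to0\). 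This yields \eqref{eq:fundamental-volterra}, and uniqueness identifies \(\Psi\) with \(\Phi_A\).

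\textbf{The inverse.} Under regressivity each \(I+\delta_kA\) is invertible. For the infinite product of inverses to converge I would use the Neumann bound from Lemma~\ref{lem:circleminus-L1}: \(\norm{(I+\delta_kA)^{-1}-I}\le 2\delta_k\norm{A}\) for all but finitely many \(k\). Commutativity then shows that the product of inverses is the inverse of the product, since the partial products are mutually inverse and multiplication is continuous. Multiplying by \(\exp(-AQ)\) gives \eqref{eq:constant-inverse}.

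The main obstacle I expect is the piecewise verification of the truncated equation when \(\mugc\) is singular-continuous and the included atoms can accumulate. Because only finitely many atoms are included at stage \(N\), it suffices to check that the integral equation is additive over \([s,\tau_1),\{\tau_1\},(\tau_1,\tau_2),\dots\), which reduces the step to the continuous identity above.
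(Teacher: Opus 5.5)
Your proposal is correct and follows essentially the same route as the paper. The shared steps are: commutativity and summability of the masses for the convergence and order-independence of the product; truncation to finitely many atoms, with the continuous factor handled by Lemma~\ref{lem:continuous-cumulative-substitution}; passage to the limit via the uniform bound and the tail $\sum_{k>N}\Deltag(\tau_k)$; and identification through the uniqueness part of Theorem~\ref{thm:evolution}. Your Neumann-series argument that the infinite product of inverses converges to the inverse of the product is a slightly more explicit version of the paper's one-line inversion step.
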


\begin{proof}
Let \(J=[s,t)\cap\Dg\). For a finite subset \(F\subset J\), write
\[
 P_F:=\prod_{\tau\in F}(I+\Deltag(\tau)A).
\]
All factors are polynomials in the same matrix \(A\), and therefore commute. Moreover,
\[
 \sum_{\tau\in J}\norm{\Deltag(\tau)A}
 \le\norm{A}\mug([s,t))<\infty.
\]
For finite sets \(F\subset G\subset J\), submultiplicativity and
\(1+x\le e^x\) give
\[
 \norm{P_F}\le
 \prod_{\tau\in F}\bigl(1+\norm{\Deltag(\tau)A}\bigr)
 \le \exp\!\left(\sum_{\tau\in J}\norm{\Deltag(\tau)A}\right)
 =:C_A,
\]
and
\[
 \norm{P_G-P_F}
 \le C_A\left[
 \exp\!\left(\sum_{\tau\in G\setminus F}
          \norm{\Deltag(\tau)A}\right)-1\right].
\]
The last quantity tends to zero when \(F\) contains a sufficiently large finite part
of the summable family. Hence \((P_F)\) is a Cauchy net in the finite-dimensional
Banach algebra \(\R^{n\times n}\), and therefore converges. Commutativity shows that
the limit is independent of the enumeration.

To identify the limit with the fundamental matrix, enumerate the atoms of \([s,T)\)
as \(\{\tau_k\}_{k\ge1}\) and introduce the truncated measures
\[
 \mug^{(N)}=\mugc+\sum_{k=1}^N\Deltag(\tau_k)\delta_{\tau_k}.
\]
Set
\[
 F_N(t)=
 \exp\!\bigl(A\mugc([s,t))\bigr)
 \prod_{\substack{k\le N\\ \tau_k<t}}
 (I+\Deltag(\tau_k)A).
\]
For completeness, put \(\theta(t)=\mugc([s,t))\). Since \(\mugc\) is
non-atomic, \(\theta\) is continuous and \(d\mu_\theta=d\mugc\). By the
Lebesgue--Stieltjes substitution formula,
\begin{align*}
 e^{A\theta(t)}-I
 &=\int_0^{\theta(t)}Ae^{Ar}\,dr
  =\int_{[s,t)}Ae^{A\theta(r)}\dmuc(r).
\end{align*}
Thus the first factor is exactly the fundamental matrix driven by \(\mugc\), including
when that measure is singular continuous. At each included atom \(\tau_k\), the
second factor gives
\[
 F_N(\tau_k^+)=(I+\Deltag(\tau_k)A)F_N(\tau_k).
\]
Hence the fundamental theorem yields
\begin{equation}\label{eq:constant-truncated-volterra}
 F_N(t)=I+\int_{[s,t)}AF_N(r)\,d\mug^{(N)}(r).
\end{equation}

The product criterion gives uniform convergence \(F_N\to F\) on \([s,T]\). A
uniform bound is supplied by
\[
 \norm{F_N(t)}
 \le\exp\!\bigl(\norm{A}\mug([s,T))\bigr)=:M_A.
\]
The total variation norm below is taken after restriction to \([s,T)\). Since \(A\) is constant,
\begin{align*}
 &\norm{\int_{[s,t)}AF_N\,d\mug^{(N)}
       -\int_{[s,t)}AF\,d\mug}\\
 &\quad\le
 \norm{A}\mug([s,T))\norm{F_N-F}_\infty
 +\norm{A}M_A\,\norm{\mug-\mug^{(N)}}_{\mathrm{TV}},
\end{align*}
where
\[
 \norm{\mug-\mug^{(N)}}_{\mathrm{TV}}
 =\sum_{k>N}\Deltag(\tau_k)\longrightarrow0.
\]
Passing to the limit in \eqref{eq:constant-truncated-volterra} shows that \(F\)
solves \eqref{eq:fundamental-volterra}. Uniqueness from
Theorem~\ref{thm:evolution} proves \eqref{eq:constant-formula}.

If \(A\) is regressive, every jump factor is invertible. The continuous exponential
is invertible with inverse \(\exp(-A\mugc([s,t)))\), and all factors commute. Inverting
the product therefore gives \eqref{eq:constant-inverse}.
\end{proof}

\begin{remark}[On the product representation]\label{rem:no-logarithm}
Formula \eqref{eq:constant-formula} requires neither diagonalizability nor a matrix
logarithm. In particular, no positivity condition on the eigenvalues of the jump
factors is needed for forward evolution. A logarithmic representation would require a
compatible choice of logarithm for every factor
\(\Id+\Deltag(\tau)A\), and it would still have to retain the dependence on each
individual atom. Accordingly, the product representation is the natural form of the fundamental matrix in the presence of atoms.
\end{remark}

\begin{corollary}\label{cor:diagonal}
If \(A=P\operatorname{diag}(\lambda_1,\ldots,\lambda_n)P^{-1}\) over
\(\C\), then
\[
 \Phi_A(t,s)
 =
 P\operatorname{diag}
 \bigl(\eG(\lambda_1;t,s),\ldots,\eG(\lambda_n;t,s)\bigr)P^{-1},
\]
where
\[
 \eG(\lambda;t,s)
 :=
 \exp\!\bigl(\lambda\mugc([s,t))\bigr)
 \prod_{\tau\in[s,t)\cap\Dg}
 \bigl(1+\lambda\Deltag(\tau)\bigr).
\]
For a real matrix, complex conjugate components combine to give a real fundamental
matrix.
\end{corollary}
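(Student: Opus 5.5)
The plan is to substitute the diagonalization directly into the product formula \eqref{eq:constant-formula} of Theorem~\ref{thm:constant}, working over \(\C\). Put \(D=\operatorname{diag}(\lambda_1,\ldots,\lambda_n)\), so \(A=PDP^{-1}\). For the continuous factor we use the similarity invariance of the power series of the exponential:
\[
 \exp\bigl(A\mugc([s,t))\bigr)=P\exp\bigl(D\mugc([s,t))\bigr)P^{-1}
 =P\operatorname{diag}\bigl(e^{\lambda_j\mugc([s,t))}\bigr)P^{-1}.
\]
Each jump factor becomes \(\Id+\Deltag(\tau)A=P(\Id+\Deltag(\tau)D)P^{-1}\), and here \(\Id+\Deltag(\tau)D\) is diagonal with entries \(1+\lambda_j\Deltag(\tau)\).

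Next we handle the product. For a finite set \(F\subset[s,t)\cap\Dg\) the inner factors \(P^{-1}P\) cancel, giving
\[
 P_F=P\operatorname{diag}\Bigl(\prod_{\tau\in F}(1+\lambda_j\Deltag(\tau))\Bigr)_{j}P^{-1}.
\]
The convergence argument in the proof of Theorem~\ref{thm:constant} uses only summability of \(\Deltag(\tau)\norm{A}\) and submultiplicativity, so it goes through verbatim in \(\C^{n\times n}\). Applied to the scalars \(\lambda_j\), it also shows that each scalar product \(\prod_\tau(1+\lambda_j\Deltag(\tau))\) converges, independently of order. Conjugation \(M\mapsto PMP^{-1}\) is continuous, so we may pass to the limit over \(F\). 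Multiplying by the continuous factor and combining the diagonal matrices entrywise yields the stated formula with \(\eG(\lambda_j;t,s)\).

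For the final sentence, \(\Phi_A(t,s)\) is real by \eqref{eq:constant-formula}, since all factors are real. Complex eigenvalues of a real \(A\) come in conjugate pairs, and eigenvectors can be chosen in conjugate pairs as well. Moreover \(\eG(\bar\lambda;t,s)=\overline{\eG(\lambda;t,s)}\), because every factor is conjugated and conjugation commutes with limits. Hence the contributions of conjugate eigenpairs sum to real matrices, consistent with the reality of \(\Phi_A\).

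The only step that needs care is the limit interchange for the possibly infinite product. It is handled by the Cauchy-net estimate of Theorem~\ref{thm:constant}, which we reuse directly with \(\norm{\lambda_j}\le\norm{A}\)-type bounds on the diagonal entries. I expect no real obstacle beyond that.
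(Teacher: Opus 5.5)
Your proposal is correct and follows the route the paper intends: the corollary has no separate proof and is a direct consequence of the product formula \eqref{eq:constant-formula} in Theorem~\ref{thm:constant}, obtained as you do by conjugating the exponential factor and each jump factor by \(P\) and passing to the limit. You do not need to redo the Cauchy-net estimate in \(\C^{n\times n}\), since the real product already converges and the continuity of \(M\mapsto P^{-1}MP\) carries the limit to the diagonal entries.
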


\begin{proposition}[Forward variation of constants without regressivity]
\label{prop:constant-forward-voc}
Let \(A\in\R^{n\times n}\) be arbitrary; no regressivity assumption is imposed. For
\(s<t\), define the forward post-jump propagator by
\begin{equation}\label{eq:constant-forward-postjump}
 \Phi_A^{\to}(t,s^+)
 :=\exp\!\bigl(A\mugc([s,t))\bigr)
   \prod_{\tau\in(s,t)\cap\Dg}
   \bigl(I+\Deltag(\tau)A\bigr).
\end{equation}
Then the unique forward solution of
\[
 x'_g=Ax+f,\qquad x(t_0)=x_0,
\]
with \(f\in L_g^1\), is
\begin{equation}\label{eq:constant-forward-voc}
 x(t)=\Phi_A(t,t_0)x_0+
 \int_{[t_0,t)}\Phi_A^{\to}(t,s^+)f(s)\dmu(s).
\end{equation}
If \(A\) is \(g\)-regressive, \(\Phi_A^{\to}(t,s^+)=\Phi_A(t,s^+)\), so
\eqref{eq:constant-forward-voc} agrees with Theorem~\ref{thm:variation}.
\end{proposition}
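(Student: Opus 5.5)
Define the candidate \(x\) by \eqref{eq:constant-forward-voc} and verify directly that it satisfies the integral equation
\[
 x(t)=x_0+\int_{[t_0,t)}\bigl(Ax(r)+f(r)\bigr)\dmu(r).
\]
Uniqueness then follows from the Gronwall-type argument in the proof of Theorem~\ref{thm:evolution}. If \(x,y\in\ACg\) both solve the problem, then \(d=\norm{x-y}\) satisfies \(d(t)\le\int_{[t_0,t)}\norm{A}d\dmu\). Iterating gives \(d\le Mp_k\to0\). Regressivity is not used anywhere in this step.

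For existence, write \(K(t,s):=\Phi_A^{\to}(t,s^+)\) for \(s<t\). The key identity I would establish is the Volterra equation in \(t\) for fixed \(s\):
\begin{equation*}
 K(t,s)=I+\Deltag(s)A\,\mathbf 1_{\Dg}(s)\cdot 0+\int_{(s,t)}A\,K(r,s)\dmu(r),
\end{equation*}
that is, \(K(t,s)=I+\int_{(s,t)}AK(r,s)\dmu(r)\). It is the fundamental matrix for the measure \(\mug\) restricted to \((s,T)\), with the atom at \(s\) excluded. I would obtain it exactly as in the proof of Theorem~\ref{thm:constant}: apply that theorem to the derivator whose measure is \(\mug|_{(s,T)}\), or repeat the truncation argument with \(\theta(t)=\mugc([s,t))\) and the atoms in \((s,t)\). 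Since \(\mugc\{s\}=0\), the continuous factor is unchanged.

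Next I would plug \(x\) into the right-hand side and use Fubini. The triangle \(\{t_0\le s<r<t\}\) is measurable, \(K\) is jointly Borel as an explicit product-limit, and \(K\) is bounded by \(M_A\), so Fubini applies with the integrand \(\norm{A}M_A\norm{f}\in L^1(\mug\otimes\mug)\). This gives
\begin{align*}
 \int_{[t_0,t)}\!A\!\int_{[t_0,r)}\!K(r,s)f(s)\dmu(s)\dmu(r)
 &=\int_{[t_0,t)}\Bigl(\int_{(s,t)}AK(r,s)\dmu(r)\Bigr)f(s)\dmu(s)\\
 &=\int_{[t_0,t)}\bigl(K(t,s)-I\bigr)f(s)\dmu(s).
\end{align*}
Adding \(\int_{[t_0,t)}f\dmu\) recovers the integral term of \(x(t)\). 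Together with \(\Phi_A(t,t_0)x_0=x_0+\int A\Phi_A(r,t_0)x_0\dmu\), this completes the existence proof. The integrand \(Ax+f\) is in \(L_g^1\) because \(x\) is bounded, so \(x\in\ACg\).

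The main obstacle is bookkeeping at the diagonal \(r=s\). When \(s\) is an atom, the set \(\{r: s<r\}\) must exclude \(r=s\), and this is why the product in \eqref{eq:constant-forward-postjump} runs over the open interval \((s,t)\). I will check that the pairs \((r,s)\) with \(r=s\) are exactly absent from \(\{s<r\}\), so the atom's self-contribution appears only through the \(I\) term, consistent with the jump law \(x(\tau^+)-x(\tau)=\Deltag(\tau)(Ax(\tau)+f(\tau))\).

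Finally, suppose \(A\) is regressive and \(s\in\Dg\). Then \(\Phi_A(t,s)=\Phi_A^{\to}(t,s^+)(I+\Deltag(s)A)\) by \eqref{eq:constant-formula}, using commutativity of the factors. Multiplying on the right by the inverse gives \(\Phi_A^{\to}(t,s^+)=\Phi_A(t,s^+)\) by \eqref{eq:postjump}. For \(s\notin\Dg\), the open and half-open products coincide, so the two propagators agree there as well.
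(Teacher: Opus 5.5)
Your proposal is correct and matches the paper's proof step by step. The shared steps are the forward kernel identity \(\Phi_A^{\to}(t,s^+)=I+\int_{(s,t)}A\Phi_A^{\to}(r,s^+)\dmu(r)\), Fubini on \(\{t_0\le s<r<t\}\) to obtain \(J(t)=\int_{[t_0,t)}[f+AJ]\dmu\), the regressivity-free Volterra uniqueness argument, and the removal of the factor \(I+\Deltag(s)A\) in the regressive case. The paper only asserts the kernel identity; your idea of deriving it from Theorem~\ref{thm:constant} applied to the derivator whose measure is \(\mug|_{(s,T)}\) is a clean way to justify that step.
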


\begin{proof}
The product in \eqref{eq:constant-forward-postjump} contains precisely the jumps
strictly after \(s\); hence it propagates a state already located after a possible jump
at \(s\). The same summability argument as in Theorem~\ref{thm:constant} shows that
it is well defined, measurable, and bounded on the time triangle. For fixed \(s\), it
satisfies the forward integral identity
\begin{equation}\label{eq:forward-kernel-integral}
 \Phi_A^{\to}(t,s^+)
 =I+\int_{(s,t)}A\Phi_A^{\to}(r,s^+)\dmu(r).
\end{equation}
Indeed, the interval starts immediately after \(s\), so the possible atom at \(s\) is
excluded, whereas every later atom is included.

Let
\[
 J(t):=\int_{[t_0,t)}\Phi_A^{\to}(t,s^+)f(s)\dmu(s).
\]
Substituting \eqref{eq:forward-kernel-integral} gives
\begin{align*}
 J(t)
 &=\int_{[t_0,t)}f(s)\dmu(s)\\
 &\quad+\int_{[t_0,t)}\int_{(s,t)}
 A\Phi_A^{\to}(r,s^+)f(s)\dmu(r)\dmu(s).
\end{align*}
The integrand is absolutely integrable because the propagator is bounded and
\(f\in L_g^1\). Fubini's theorem therefore permits exchanging the order of
integration over \(\{(s,r):t_0\le s<r<t\}\), and yields
\begin{align*}
 J(t)
 &=\int_{[t_0,t)}f(s)\dmu(s)
 +\int_{[t_0,t)}A
   \left[\int_{[t_0,r)}\Phi_A^{\to}(r,s^+)f(s)\dmu(s)\right]\dmu(r)\\
 &=\int_{[t_0,t)}[f(r)+AJ(r)]\dmu(r).
\end{align*}
The homogeneous term \(Y(t)=\Phi_A(t,t_0)x_0\) satisfies
\(Y(t)=x_0+\int_{[t_0,t)}AY(r)\dmu(r)\). Thus \(X=Y+J\), the right-hand side of
\eqref{eq:constant-forward-voc}, satisfies
\[
 X(t)=x_0+\int_{[t_0,t)}[AX(r)+f(r)]\dmu(r).
\]
This is precisely the canonical integral formulation. Forward uniqueness follows from
the Volterra argument in Theorem~\ref{thm:evolution}, which does not require
regressivity.

If \(A\) is regressive, formula \eqref{eq:constant-formula} shows that
\(\Phi_A(t,s)\) contains the factor \(I+\Deltag(s)A\) exactly when \(s\) is an
atom. Multiplication by its inverse in \eqref{eq:postjump} deletes that factor and
leaves exactly \eqref{eq:constant-forward-postjump}. This proves the last assertion.
\end{proof}

\begin{counterexample}[Dependence on the complete jump history]
\label{ex:wrong-exponential}
Let \(n=1\), \(A=1\), \(t_0=0\), and
\[
 g(t)=
 \begin{cases}
 t,&t\le1,\\
 t+1,&t>1.
 \end{cases}
\]
Then \(\mug\) is Lebesgue measure plus one atom of mass \(1\) at \(t=1\).
The solution of \(x'_g=x\), \(x(0)=1\), is
\[
 x(t)=
 \begin{cases}
 e^t,&t\le1,\\
 2e^t,&t>1.
 \end{cases}
\]
In contrast, the expression \(e^{g(t)-g(0)}\) gives \(e^{t+1}\) for \(t>1\).
Thus the fundamental solution is not determined, in general, by the total increment \(g(t)-g(s)\) alone; the individual jump factors must be retained. Notice also that
for every \(t>1\) the endpoint \(t\) is not itself an atom, although the preceding atom
still changes the value of the solution. Thus a formula that distinguishes cases only
according to whether the terminal point belongs to \(\Dg\) cannot encode the evolution
history.
\end{counterexample}

\subsection{Variation of constants}

\begin{theorem}[Variation-of-constants formula]\label{thm:variation}
Assume that \(A\in\Lg{1}\) is \(g\)-regressive and
\(f\in\Lg{1}([t_0,T);\R^n)\). Then
\begin{equation}\label{eq:linear-IVP}
 \begin{cases}
 x'_g(t)=A(t)x(t)+f(t),&\mug\text{-a.e. }t\in[t_0,T),\\
 x(t_0)=x_0,
 \end{cases}
\end{equation}
has a unique solution \(x\in\ACg(I;\R^n)\), and
\begin{equation}\label{eq:variation}
 x(t)
 =
 \Phi(t,t_0)x_0+
 \int_{[t_0,t)}\Phi(t,s^+)f(s)\dmu(s).
\end{equation}
Equivalently,
\begin{equation}\label{eq:variation-reference}
 x(t)
 =
 \Phi(t,t_0)
 \left[
 x_0+
 \int_{[t_0,t)}
 \Phi(s^+,t_0)^{-1}f(s)\dmu(s)
 \right].
\end{equation}
\end{theorem}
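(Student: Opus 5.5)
The plan is to work with the reference form \eqref{eq:variation-reference}, verify it with the Stieltjes product rule and the inverse evolution of Theorem~\ref{thm:inverse}, and then deduce \eqref{eq:variation} and uniqueness.

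\textbf{Well-posedness of the integral.} Write \(Z(s)=\Phi(s,t_0)^{-1}\). Theorem~\ref{thm:inverse} gives \(Z\in\ACg\) and \(Z'_g=-Z^+A\), with \(Z^+(s)=\Phi(s^+,t_0)^{-1}\) by \eqref{eq:inverse-jump} and \eqref{eq:postjump}. Proposition~\ref{prop:propagator-measurable} shows that \(s\mapsto\Phi(s^+,t_0)^{-1}\) is bounded and measurable, so \(Z^+f\in L_g^1\). Hence
\[
 y(t):=x_0+\int_{[t_0,t)}Z^+(s)f(s)\dmu(s)
\]
belongs to \(\ACg\), with \(y'_g=Z^+f\).

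\textbf{Existence.} Set \(x(t)=\Phi(t,t_0)y(t)\). Both factors are in \(\ACg\), so Proposition~\ref{prop:product} applies with \(U=\Phi(\cdot,t_0)\), \(V=y\). Using \eqref{eq:product-right},
\[
 x'_g=A\Phi y+\Phi^+Z^+f=Ax+f,
\]
because \(\Phi^+(s,t_0)Z^+(s)=\Phi(s^+,t_0)\Phi(s^+,t_0)^{-1}=I\). This identity holds at atoms by \eqref{eq:matrix-jump} and \eqref{eq:inverse-jump}, and trivially elsewhere. Also \(x(t_0)=x_0\), so \(x\) solves \eqref{eq:linear-IVP}.

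Formula \eqref{eq:variation} follows by moving \(\Phi(t,t_0)\) inside the integral, since it is a constant matrix for fixed \(t\). The cocycle identity \eqref{eq:cocycle}, extended to \(\Phi(t,s^+)=\Phi(t,t_0)\Phi(s^+,t_0)^{-1}\), then gives the kernel.

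\textbf{Uniqueness.} If \(x_1,x_2\) are two solutions, then \(w=x_1-x_2\) solves \(w'_g=Aw\) with \(w(t_0)=0\). Applying the product rule to \(Zw\) gives
\[
 (Zw)'_g=-Z^+Aw+Z^+Aw=0,
\]
so \(Zw\) is constant, hence zero, and therefore \(w=0\). Alternatively, the Volterra uniqueness argument in the proof of Theorem~\ref{thm:evolution} applies to the vector equation.

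\textbf{Main obstacle.} The delicate step is the atomic bookkeeping: I must check that the right-value \(\Phi^+\) appearing in the product rule cancels exactly against \(Z^+=\Phi(s^+,t_0)^{-1}\), and not against \(\Phi(s,t_0)^{-1}\). This cancellation is what forces the post-jump kernel \(\Phi(t,s^+)\). I also need the integrability of \(Z^+f\), which relies on the boundedness established in Proposition~\ref{prop:propagator-measurable}.
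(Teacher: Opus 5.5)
Your proposal is correct and follows the paper's proof essentially step by step: it uses the same auxiliary function \(x_0+\int_{[t_0,t)}\Phi(s^+,t_0)^{-1}f(s)\dmu(s)\), the same product-rule cancellation \(\Phi(t^+,t_0)\Phi(t^+,t_0)^{-1}=I\), and the same post-jump cocycle identity \(\Phi(t,t_0)\Phi(s^+,t_0)^{-1}=\Phi(t,s^+)\) to pass to \eqref{eq:variation}. The only difference is in uniqueness: your primary argument differentiates \(Zw\), as in the proof of Theorem~\ref{thm:inverse}, whereas the paper invokes the Volterra iteration from Theorem~\ref{thm:evolution}, which you also list as an alternative; both arguments are valid.
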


\begin{proof}
Define
\[
 b(t):=\Phi(t^+,t_0)^{-1}f(t).
\]
By Proposition~\ref{prop:propagator-measurable}, \(b\) is measurable. The inverse
bound \eqref{eq:inverse-bound} gives
\[
 \norm{b(t)}
 \le C_\Phi\norm{f(t)},
 \qquad
 C_\Phi:=\sup_{t\in[t_0,T)}\norm{\Phi(t^+,t_0)^{-1}}<\infty,
\]
so \(b\in L_g^1\). Let
\[
 c(t):=x_0+\int_{[t_0,t)}b(s)\dmu(s).
\]
Then \(c\in\ACg\), \(c(t_0)=x_0\), and \(c'_g=b\). Set
\(x(t)=\Phi(t,t_0)c(t)\). By Proposition~\ref{prop:product},
\begin{align*}
 x'_g(t)
 &=A(t)\Phi(t,t_0)c(t)+\Phi(t^+,t_0)c'_g(t)\\
 &=A(t)x(t)+\Phi(t^+,t_0)\Phi(t^+,t_0)^{-1}f(t)\\
 &=A(t)x(t)+f(t)
\end{align*}
for almost every \(t\), and \(x(t_0)=x_0\). Substitution of the definition of \(c\)
gives \eqref{eq:variation-reference}.

For \(s<t\), the cocycle identity, interpreted from the state immediately after
\(s\), yields
\[
 \Phi(t,t_0)\Phi(s^+,t_0)^{-1}=\Phi(t,s^+).
\]
Multiplying the integral in \eqref{eq:variation-reference} by
\(\Phi(t,t_0)\) and using this identity gives \eqref{eq:variation}.

If \(x_1,x_2\) are two solutions, their difference \(y=x_1-x_2\) satisfies
\[
 y(t)=\int_{[t_0,t)}A(s)y(s)\dmu(s),\qquad y(t_0)=0.
\]
The uniqueness argument in Theorem~\ref{thm:evolution}, applied columnwise (or directly
to this vector equation), gives \(y=0\). Thus the solution is unique.
\end{proof}

\begin{corollary}[Nonhomogeneous jump]\label{cor:forcing-jump}
At every \(t\in[t_0,T)\cap\Dg\), the solution in
Theorem~\ref{thm:variation} satisfies
\begin{equation}\label{eq:forcing-jump}
 x(t^+)
 =
 \bigl(\Id+\Deltag(t)A(t)\bigr)x(t)
 +
 \Deltag(t)f(t).
\end{equation}
\end{corollary}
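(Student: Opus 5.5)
The identity follows directly from the jump law of Proposition~\ref{prop:canonical} applied to the solution of Theorem~\ref{thm:variation}. That solution \(x\) belongs to \(\ACg(I;\R^n)\), and its \(g\)-derivative satisfies
\[
 x'_g(t)=A(t)x(t)+f(t)
\]
for \(\mug\)-almost every \(t\). Fix an atom \(t\in[t_0,T)\cap\Dg\). Equation \eqref{eq:canonical-jump} gives
\[
 x(t^+)-x(t)=\Deltag(t)\,x'_g(t).
\]
Substituting the differential equation at \(t\) and rearranging yields
\[
 x(t^+)=x(t)+\Deltag(t)\bigl(A(t)x(t)+f(t)\bigr)=\bigl(\Id+\Deltag(t)A(t)\bigr)x(t)+\Deltag(t)f(t),
\]
which is \eqref{eq:forcing-jump}.

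The only delicate point is that the differential equation holds merely \(\mug\)-almost everywhere, while we evaluate it at a single point. This is harmless because \(\mug(\{t\})=\Deltag(t)>0\). A property holding \(\mug\)-a.e.\ therefore cannot fail on the singleton \(\{t\}\).

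The values \(A(t)\), \(f(t)\) and \(x'_g(t)\) are also representative-independent. As remarked after Definition~\ref{def:acg} and in the preliminaries, values of \(L_g^1\)-classes at atoms are fixed by the equivalence class.

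As a cross-check, one can derive the same formula from \eqref{eq:variation-reference}. The bracketed term \(c\) jumps by \(\Deltag(t)\Phi(t^+,t_0)^{-1}f(t)\), by Proposition~\ref{prop:canonical} applied to \(c\). The factor \(\Phi(\cdot,t_0)\) jumps according to \eqref{eq:matrix-jump}. Multiplying out, and using the \(g\)-continuity from the left implicit in the canonical representation, gives the same identity. I do not expect any step to be a real obstacle beyond this a.e.-at-an-atom justification.
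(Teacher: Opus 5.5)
Your proposal is correct and matches the argument the paper intends: the corollary is stated without a separate proof, as an immediate consequence of the canonical jump law in Proposition~\ref{prop:canonical} applied to \(x'_g=Ax+f\) (just as \eqref{eq:matrix-jump} is obtained in Theorem~\ref{thm:evolution}), and your observation that \(\mug(\{t\})=\Deltag(t)>0\) forces the \(\mug\)-a.e.\ equation to hold at the atom is the one point that needs justification. In your cross-check, the step \(x(t^+)=\Phi(t^+,t_0)c(t^+)\) uses the existence of the right limits of both factors rather than left \(g\)-continuity.
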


The forcing term in \eqref{eq:forcing-jump} is not multiplied by the homogeneous jump factor. This accounts for the post-jump argument \(s^+\) in \eqref{eq:variation}.

\begin{example}[Necessity of the post-jump propagator]\label{ex:prejump-kernel}
Consider the scalar problem on an interval containing a single atom \(\tau\) of mass
\(\delta>0\), with \(A(t)\equiv a\), \(f(t)\equiv b\), and \(x(\tau)=0\). The
differential equation gives directly
\[
 x(\tau^+)=\delta b.
\]
The post-jump formula reproduces this value because
\(\Phi(\tau^+,\tau^+)=1\). If one were to use \(\Phi(t,s)\) in place of
\(\Phi(t,s^+)\), the atomic contribution at \(t=\tau^+\) would instead be
\[
 \Phi(\tau^+,\tau)b\,\delta=(1+a\delta)b\delta,
\]
which contains an extraneous homogeneous jump factor. Hence the use of the pre-jump kernel introduces an additional factor and does not reproduce the prescribed jump relation.
\end{example}

\section{Exact controllability}\label{sec:control}

Fix \(t_1\in(t_0,T]\) and consider
\begin{equation}\label{eq:linear-control}
 \begin{cases}
 x'_g(t)=A(t)x(t)+B(t)u(t),
   &\mug\text{-a.e. }t\in[t_0,t_1),\\
 x(t_0)=x_0.
 \end{cases}
\end{equation}
Assume that \(A\in\Lg{1}\) is \(g\)-regressive,
\[
 B\in\Lg{2}([t_0,t_1);\R^{n\times m}),
 \qquad
 u\in\cU:=\Lg{2}([t_0,t_1);\R^m).
\]
We equip \(\cU\) with the Hilbert inner product
\begin{equation}\label{eq:control-inner-product}
 \ip{u}{v}_{\cU}:=
 \int_{[t_0,t_1)}u(s)^\top v(s)\dmu(s).
\end{equation}
Since \(\mug([t_0,t_1))<\infty\), H\"older's inequality gives
\[
 \norm{Bu}_{L_g^1}
 \le \norm{B}_{L_g^2}\norm{u}_{L_g^2},
\]
so the nonhomogeneous term belongs to \(L_g^1\).

\subsection{Endpoint operator and Gramian}

By Theorem~\ref{thm:variation},
\[
 x(t_1)=\Phi(t_1,t_0)x_0+\cK u,
\]
where
\begin{equation}\label{eq:endpoint}
 \cK u
 :=
 \int_{[t_0,t_1)}H(s)u(s)\dmu(s),
 \qquad
 H(s):=\Phi(t_1,s^+)B(s).
\end{equation}
By Proposition~\ref{prop:propagator-measurable}, \(H\) is measurable. If
\[
 M_{t_1}:=\sup_{s\in[t_0,t_1)}\norm{\Phi(t_1,s^+)}<\infty,
\]
then Cauchy--Schwarz gives
\[
 \norm{\cK u}
 \le M_{t_1}\norm{B}_{L_g^2}\norm{u}_{L_g^2}.
\]
Hence \(\cK:\cU\to\R^n\) is bounded.

\begin{definition}
System \eqref{eq:linear-control} is \emph{exactly controllable} on
\([t_0,t_1]\) if, for every \(x_0,x_1\in\R^n\), there exists \(u\in\cU\)
such that the associated solution satisfies \(x(t_1)=x_1\).
\end{definition}

\begin{definition}
The controllability Gramian is
\begin{equation}\label{eq:W}
 W(t_0,t_1)
 :=
 \int_{[t_0,t_1)}
 H(s)H(s)^\top\dmu(s).
\end{equation}
\end{definition}

Its continuous--atomic decomposition is
\begin{align}
 W(t_0,t_1)
 ={}&
 \int_{[t_0,t_1)}
 H(s)H(s)^\top\dmuc(s) \notag\\
 &+
 \sum_{\tau\in[t_0,t_1)\cap\Dg}
 \Deltag(\tau)H(\tau)H(\tau)^\top.
 \label{eq:W-decomp}
\end{align}

\begin{example}[Role of the atomic mass]\label{ex:atomic-weight}
Let \(n=m=1\), \(A=0\), \(B=1\), and suppose that the control interval contains
one atom \(\tau\) of mass \(\delta>0\) and no other \(\mug\)-mass. Then
\[
 x(t_1)=x_0+\delta u(\tau),\qquad
 \norm{u}_{L_g^2}^2=\delta\abs{u(\tau)}^2,
\]
and the Gramian is \(W=\delta\). To produce an endpoint displacement \(d\), the
minimum-energy control is \(u(\tau)=d/\delta\), with energy \(d^2/\delta\). Omitting
the factor \(\delta\) from the Gramian would give the control \(u(\tau)=d\), whose
endpoint displacement is only \(\delta d\) unless \(\delta=1\). Therefore the factors \(\Deltag(\tau)\) in \eqref{eq:W-decomp} are fixed simultaneously by the endpoint map and by the \(L_g^2\)-norm.
\end{example}

\begin{theorem}[Gramian criterion]\label{thm:gramian}
The following statements are equivalent:
\begin{enumerate}[label=\textnormal{(\roman*)}]
\item system \eqref{eq:linear-control} is exactly controllable;
\item \(\cK\) is surjective;
\item \(W(t_0,t_1)\) is positive definite;
\item there is no nonzero \(z\in\R^n\) such that
\[
 B(s)^\top\Phi(t_1,s^+)^\top z=0
 \quad\text{for \(\mug\)-almost every \(s\in[t_0,t_1)\)}.
\]
\end{enumerate}
\end{theorem}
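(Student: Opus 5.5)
The plan is to prove the cycle of equivalences by working with the bounded linear operator \(\cK:\cU\to\R^n\) and its Hilbert adjoint. From \eqref{eq:endpoint} and the inner product \eqref{eq:control-inner-product}, for \(z\in\R^n\) and \(u\in\cU\),
\[
 z^\top\cK u=\int_{[t_0,t_1)}\bigl(H(s)^\top z\bigr)^\top u(s)\dmu(s),
\]
so the adjoint is \((\cK^*z)(s)=H(s)^\top z=B(s)^\top\Phi(t_1,s^+)^\top z\). This function lies in \(\cU\) because \(\Phi(t_1,\cdot^+)\) is bounded and measurable (Proposition~\ref{prop:propagator-measurable}) and \(B\in L_g^2\). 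Exchanging the integral with the fixed vector \(z\) gives \(\cK\cK^*=W(t_0,t_1)\) and
\[
 z^\top W z=\norm{\cK^*z}_{\cU}^2=\int_{[t_0,t_1)}\norm{H(s)^\top z}^2\dmu(s).
\]

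For (i)\(\Leftrightarrow\)(ii), use \(x(t_1)=\Phi(t_1,t_0)x_0+\cK u\) from Theorem~\ref{thm:variation}. Exact controllability holds iff for every \(x_0,x_1\) the vector \(x_1-\Phi(t_1,t_0)x_0\) lies in \(\Range\cK\). Since \(x_1\) is arbitrary, this is surjectivity of \(\cK\); conversely, taking \(x_0=0\) recovers it.

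For (ii)\(\Leftrightarrow\)(iii), the target is finite-dimensional, so \(\Range\cK\) is closed and \(\Range\cK=(\Ker\cK^*)^\perp\). Hence \(\cK\) is surjective iff \(\Ker\cK^*=\{0\}\). By the quadratic-form identity, \(\Ker\cK^*=\Ker W\). As \(W\) is symmetric positive semidefinite, \(\Ker W=\{0\}\) is equivalent to positive definiteness.

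For (iii)\(\Leftrightarrow\)(iv), note that \(z^\top Wz=0\) iff \(H(s)^\top z=0\) for \(\mug\)-a.e. \(s\), which is exactly the condition in (iv). For sufficiency one can also write the explicit control \(u=\cK^*W^{-1}(x_1-\Phi(t_1,t_0)x_0)\), which is the minimum-energy control.

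The only delicate step is justifying the adjoint computation: measurability of \(H\), membership \(\cK^*z\in\cU\), and the use of Fubini-free linearity under the integral. Each of these follows from the boundedness of \(M_{t_1}\) and \(B\in L_g^2\). Atoms need no separate treatment, since they are absorbed into \(\mug\) and appear with the weight \(\Deltag\) of \eqref{eq:W-decomp}.
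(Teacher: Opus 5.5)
Your proposal is correct and follows the paper's proof essentially step for step: you compute $\cK^*z=H(\cdot)^\top z$, derive $\cK\cK^*=W(t_0,t_1)$ and the quadratic-form identity, and then combine $(\Range\cK)^\perp=\Ker\cK^*$ with finite-dimensionality of the target. Your explicit observation that $\Ker\cK^*=\Ker W$, and your remark on the steering control $\cK^*W^{-1}(x_1-\Phi(t_1,t_0)x_0)$, are harmless additions; the paper takes up the latter separately in Corollary~\ref{cor:minenergy}.
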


\begin{proof}
For \(u\in\cU\) and \(z\in\R^n\), linearity of the Bochner integral gives
\begin{align*}
 \ip{\cK u}{z}_{\R^n}
 &=z^\top\int_{[t_0,t_1)}H(s)u(s)\dmu(s)\\
 &=\int_{[t_0,t_1)}u(s)^\top H(s)^\top z\dmu(s)
 =\ip{u}{H(\cdot)^\top z}_{\cU}.
\end{align*}
Thus
\[
 (\cK^*z)(s)=H(s)^\top z.
\]
The function belongs to \(\cU\) because \(H\in L_g^2\). Applying \(\cK\) once
more,
\[
 \cK\cK^*z
 =\int_{[t_0,t_1)}H(s)H(s)^\top z\dmu(s)
 =W(t_0,t_1)z.
\]
Consequently,
\begin{equation}\label{eq:gramian-quadratic-form}
 z^\top W(t_0,t_1)z
 =\norm{\cK^*z}_{\cU}^2
 =\int_{[t_0,t_1)}\norm{H(s)^\top z}^2\dmu(s).
\end{equation}

The system is exactly controllable if and only if \(\Range\cK=\R^n\), because the
required displacement
\(d=x_1-\Phi(t_1,t_0)x_0\) ranges over all of \(\R^n\). The Hilbert-space identity
\((\Range\cK)^\perp=\Ker\cK^*\), together with the finite-dimensionality of the
codomain, shows that \(\cK\) is surjective if and only if \(\cK^*\) is injective.
By \eqref{eq:gramian-quadratic-form}, injectivity of \(\cK^*\) is equivalent to
\(z^\top Wz>0\) for every \(z\ne0\), i.e.\ to positive definiteness of \(W\). The same
identity shows that \(\cK^*z=0\) exactly when
\(B(s)^\top\Phi(t_1,s^+)^\top z=0\) almost everywhere. This proves all equivalences.
\end{proof}

\begin{corollary}[Minimum-energy control]\label{cor:minenergy}
Assume that \(W=W(t_0,t_1)\) is invertible and put
\[
 d:=x_1-\Phi(t_1,t_0)x_0.
\]
Then
\begin{equation}\label{eq:minenergy}
 u^\ast(s)
 =
 B(s)^\top\Phi(t_1,s^+)^\top W^{-1}d
\end{equation}
steers \(x_0\) to \(x_1\), and it is the unique control of minimum
\(\Lg{2}\)-norm among all controls with this endpoint. Its minimum energy is
\begin{equation}\label{eq:min-energy-value}
 \norm{u^*}_{L_g^2}^2=d^\top W^{-1}d.
\end{equation}
\end{corollary}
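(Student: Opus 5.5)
The argument is the classical Hilbert-space minimum-norm one, built on the adjoint computed in the proof of Theorem~\ref{thm:gramian}. There we showed that \((\cK^*z)(s)=H(s)^\top z\) with \(H(s)=\Phi(t_1,s^+)B(s)\), and that \(\cK\cK^*=W\). The control \eqref{eq:minenergy} is exactly \(u^*=\cK^*W^{-1}d\). It lies in \(\cU\) because \(H\in L_g^2\), which follows from the boundedness of \(\Phi(t_1,s^+)\) in Proposition~\ref{prop:propagator-measurable} together with \(B\in L_g^2\).

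\textbf{Steering.} Compute \(\cK u^*=\cK\cK^*W^{-1}d=WW^{-1}d=d\). By Theorem~\ref{thm:variation}, \(x(t_1)=\Phi(t_1,t_0)x_0+\cK u^*\), and this equals \(x_1\) by the definition of \(d\).

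\textbf{Minimality and uniqueness.} Let \(u\in\cU\) be any control with \(\cK u=d\), and set \(v=u-u^*\in\Ker\cK\). Then
\[
\ip{u^*}{v}_{\cU}=\ip{\cK^*W^{-1}d}{v}_{\cU}=\ip{W^{-1}d}{\cK v}_{\R^n}=0 .
\]
Hence \(\norm{u}^2=\norm{u^*}^2+\norm{v}^2\ge\norm{u^*}^2\), with equality only if \(v=0\) in \(L_g^2\). Uniqueness is therefore understood up to \(\mug\)-null sets, and at atoms this means pointwise, since atoms carry positive mass. Equivalently, \(u^*\in\Range\cK^*=(\Ker\cK)^\perp\) is the orthogonal projection of \(0\) onto the closed affine subspace \(\cK^{-1}(d)\).

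\textbf{Energy value.} Using the symmetry of \(W\),
\[
\norm{u^*}^2=\ip{\cK^*W^{-1}d}{\cK^*W^{-1}d}=(W^{-1}d)^\top \cK\cK^*W^{-1}d=d^\top W^{-1}d .
\]

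No step is a genuine obstacle. The only point needing care is that all inner products are taken in \(L_g^2\) with respect to \(\mug\), atoms included. This is precisely the setting in which the identity \(\cK\cK^*=W\), with its atomic weights \(\Deltag(\tau)\), was established in Theorem~\ref{thm:gramian}.
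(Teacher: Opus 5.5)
Your proposal is correct and follows essentially the same route as the paper: you write $u^*=\cK^*W^{-1}d$, obtain steering from $\cK\cK^*=W$, get minimality and uniqueness by Pythagoras with $u^*\perp\Ker\cK$, and compute the energy as $\ip{W^{-1}d}{\cK\cK^*W^{-1}d}=d^\top W^{-1}d$. The only cosmetic difference is that you check the orthogonality directly through the adjoint relation, whereas the paper cites $\Range\cK^*=(\Ker\cK)^\perp$.
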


\begin{proof}
In operator notation, \(u^*=\cK^*W^{-1}d\). Since \(W=\cK\cK^*\),
\[
 \cK u^*=\cK\cK^*W^{-1}d=d,
\]
so \(u^*\) produces the prescribed endpoint. If \(u\) is any other admissible
control, then \(\cK(u-u^*)=0\), and hence
\[
 u=u^*+v,\qquad v\in\Ker\cK.
\]
On the other hand, \(u^*\in\Range\cK^*\), and
\(\Range\cK^*=(\Ker\cK)^\perp\). Therefore
\[
 \norm{u}_{\cU}^2
 =\norm{u^*}_{\cU}^2+\norm{v}_{\cU}^2
 \ge\norm{u^*}_{\cU}^2,
\]
with equality only when \(v=0\). Finally,
\begin{align*}
 \norm{u^*}_{\cU}^2
 &=\ip{\cK^*W^{-1}d}{\cK^*W^{-1}d}_{\cU}\\
 &=\ip{W^{-1}d}{\cK\cK^*W^{-1}d}_{\R^n}
 =d^\top W^{-1}d.
\end{align*}
\end{proof}

\begin{remark}[Consistency with the Pontryagin principle on time scales]
\label{rem:pmp-timescales}
Under the regularity hypotheses of \citet{BourdinTrelat2013}, and provided that the
minimum-energy control belongs to the admissible class considered there, the
Pontryagin principle on a time scale provides a consistency check for
Corollary~\ref{cor:minenergy}. Consider the quadratic cost
\[
 J(u)=\frac12\int_{[t_0,t_1)}\norm{u(s)}^2\,d\mu_g(s)
\]
with unconstrained controls. With the normal multiplier, the Hamiltonian is
\[
 \mathcal H(x,u,p,s)=p^\top(A(s)x+B(s)u)-\frac12u^\top u.
\]
At right-dense points Hamiltonian maximization gives
\(u^*(s)=B(s)^\top p(s)\). At right-scattered points, because the control set is
\(\R^m\), the directional condition yields the corresponding stationarity relation
with the shifted adjoint,
\[
 u^*(s)=B(s)^\top p^\sigma(s)=B(s)^\top p(s^+).
\]
For a terminal multiplier \(\lambda\), adjoint propagation gives
\[
 p(s^+)=\Phi(t_1,s^+)^\top\lambda.
\]
Choosing \(\lambda=W^{-1}d\) yields the same candidate as
\eqref{eq:minenergy}; strict convexity of the cost on the affine set of feasible
controls identifies it with the unique minimum-energy control. Thus the
Hilbert-space endpoint-operator proof used here is consistent with the time-scale
Pontryagin formalism, while remaining directly applicable to general Stieltjes
measures, including singular-continuous components that are not generated by a time
scale.
\end{remark}

\subsection{Reachable subspace}

For a measurable matrix function \(H:[t_0,t_1)\to\R^{n\times m}\), define
\begin{equation}\label{eq:essspan-def}
 \essspan\Range H
 :=
 \left\{
 z\in\R^n:H(s)^\top z=0\quad\mug\text{-a.e.}
 \right\}^{\perp}.
\end{equation}
Equivalently, this is the smallest subspace \(V\subset\R^n\) such that
\(\Range H(s)\subset V\) for \(\mug\)-almost every \(s\).

\begin{proposition}\label{prop:reachable}
The reachable subspace from the origin at \(t_1\) is
\begin{equation}\label{eq:reachable}
 \cR_g(t_0,t_1)
 :=
 \Range\cK
 =
 \essspan
 \left\{
 \Range\bigl(\Phi(t_1,s^+)B(s)\bigr):
 s\in[t_0,t_1)
 \right\}.
\end{equation}
\end{proposition}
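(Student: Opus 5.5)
The plan is to reduce the statement to the finite-dimensional identity \(\Range\cK=(\Ker\cK^*)^\perp\), exactly as in the proof of Theorem~\ref{thm:gramian}, and then to recognise \(\Ker\cK^*\) as the annihilator appearing in the definition \eqref{eq:essspan-def}.

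First, the computation in the proof of Theorem~\ref{thm:gramian} gives \((\cK^*z)(s)=H(s)^\top z\) with \(H(s)=\Phi(t_1,s^+)B(s)\). Here \(H\) is measurable by Proposition~\ref{prop:propagator-measurable} and belongs to \(L_g^2\) because \(\Phi(t_1,\cdot^+)\) is bounded and \(B\in L_g^2\). Hence
\[
 \Ker\cK^*=\{z\in\R^n:H(s)^\top z=0\ \text{for }\mug\text{-a.e. }s\in[t_0,t_1)\}.
\]
Second, since \(\Range\cK\) is a subspace of \(\R^n\), it is automatically closed. The Hilbert-space identity \((\Range\cK)^\perp=\Ker\cK^*\) therefore gives \(\Range\cK=(\Ker\cK^*)^\perp\). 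By \eqref{eq:essspan-def}, this right-hand side is precisely \(\essspan\Range H\), which proves \eqref{eq:reachable}.

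For completeness I would also justify the ``equivalently'' clause after \eqref{eq:essspan-def}, namely that \(V_0:=(\Ker\cK^*)^\perp\) is the smallest subspace containing \(\Range H(s)\) for \(\mug\)-almost every \(s\).
\begin{itemize}
\item[(a)] \(V_0\) has this property. Take an orthonormal basis \(z_1,\dots,z_k\) of \(\Ker\cK^*\). Each identity \(H(s)^\top z_i=0\) fails only on a null set, so outside the union of these finitely many null sets \(\Range H(s)\perp z_i\) for all \(i\), that is, \(\Range H(s)\subset V_0\).
\item[(b)] \(V_0\) is the smallest such subspace. If \(V\) is any subspace with \(\Range H(s)\subset V\) almost everywhere, then for every \(z\in V^\perp\) we have \(H(s)^\top z=0\) almost everywhere. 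Thus \(V^\perp\subset\Ker\cK^*\), and so \(V_0\subset V\).
\end{itemize}

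The main (mild) obstacle is the measure-theoretic care in (a). An annihilating vector \(z\) has its own exceptional null set, and there are uncountably many such \(z\), so these sets cannot simply be united. Reducing to a finite basis of \(\Ker\cK^*\) by linearity, as in (a), avoids this. Everything else is immediate from Theorem~\ref{thm:gramian}.
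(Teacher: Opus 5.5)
Your proposal is correct and follows essentially the same route as the paper: you identify $(\Range\cK)^\perp$ with $\Ker\cK^*=\{z: H(s)^\top z=0\ \mug\text{-a.e.}\}$ and then take orthogonal complements in $\R^n$. Your finite-basis argument in (a) adds useful detail, because the paper's treatment of the ``smallest subspace'' description takes the existence of that minimal subspace for granted, and (a) and (b) are what establish it.
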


\begin{proof}
Let
\[
 V:=\left\{z\in\R^n:H(s)^\top z=0\ \text{for }\mug\text{-a.e. }s\right\}^\perp.
\]
For \(z\in\R^n\),
\[
 z\in(\Range\cK)^\perp
 \quad\Longleftrightarrow\quad
 \ip{\cK u}{z}=0\ \text{for every }u\in\cU
 \quad\Longleftrightarrow\quad
 \cK^*z=0.
\]
Since \((\cK^*z)(s)=H(s)^\top z\), the last condition is exactly
\(z\in V^\perp\). Hence
\[
 (\Range\cK)^\perp=V^\perp.
\]
Both \(\Range\cK\) and \(V\) are subspaces of the finite-dimensional space
\(\R^n\), so taking orthogonal complements again gives
\(\Range\cK=V\), which is \eqref{eq:reachable}.

To see the stated equivalent description, let \(V_0\) be the smallest subspace with
\(\Range H(s)\subset V_0\) almost everywhere. Then \(z\in V_0^\perp\) if and only
if \(H(s)^\top z=0\) almost everywhere. Therefore \(V_0=V\).
\end{proof}

\subsection{Constant coefficients and the Kalman condition}

Assume \(A(t)\equiv A\) and \(B(t)\equiv B\), and set
\[
 \cC(A,B)
 :=
 \begin{bmatrix}
 B&AB&A^2B&\cdots&A^{n-1}B
 \end{bmatrix}.
\]

\begin{proposition}[Kalman necessity for arbitrary derivators]\label{prop:kalman-necessary}
Let \(A\) and \(B\) be constant. No regressivity assumption is required. Let
\(\cR_g(t_0,t_1)\) denote the range of the forward endpoint map defined by
Proposition~\ref{prop:constant-forward-voc}. Then, for every derivator \(g\),
\[
 \cR_g(t_0,t_1)\subset\Range\cC(A,B).
\]
Consequently, exact controllability implies
\[
 \rank\cC(A,B)=n.
\]
\end{proposition}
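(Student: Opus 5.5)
The plan is to show that every propagated input direction \(\Phi_A^{\to}(t_1,s^+)B\) has range contained in the Krylov space \(\mathcal K:=\Range\cC(A,B)\). Once this is done, the endpoint map of Proposition~\ref{prop:constant-forward-voc} (with \(x_0=0\), \(f=Bu\)) is an integral of vectors in \(\mathcal K\), and so it lands in \(\mathcal K\). Exact controllability means the forward endpoint range is all of \(\R^n\), and then \(n=\dim\mathcal K=\rank\cC(A,B)\).

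The key algebraic fact is that \(\mathcal K\) is \(A\)-invariant. By the Cayley--Hamilton theorem, \(A^nB\) is a linear combination of \(B,\ldots,A^{n-1}B\), so \(A\mathcal K\subset\mathcal K\). Hence \(p(A)\mathcal K\subset\mathcal K\) for every polynomial \(p\). Since \(\mathcal K\) is a closed subspace of \(\R^n\), the same holds for every matrix that is a norm limit of polynomials in \(A\). In particular \(\exp(cA)\mathcal K\subset\mathcal K\) for every real \(c\), because the exponential is the limit of its partial sums. Also \(\Range B\subset\mathcal K\).

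Next I will show that \(\Phi_A^{\to}(t,s^+)\) maps \(\mathcal K\) into itself for every \(s<t\). By \eqref{eq:constant-forward-postjump}, this propagator is the product of \(\exp(A\mugc([s,t)))\) with the convergent product of the factors \(I+\Deltag(\tau)A\) over \(\tau\in(s,t)\cap\Dg\). Each finite partial product is a polynomial in \(A\), and the full product is their limit, which exists by the Cauchy-net argument of Theorem~\ref{thm:constant}. Both factors therefore leave \(\mathcal K\) invariant. It follows that \(H(s)=\Phi_A^{\to}(t_1,s^+)B\) satisfies \(\Range H(s)\subset\mathcal K\) for every \(s\). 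This is stronger than the \(\mug\)-almost-everywhere statement needed for the essential span. Regressivity is not used anywhere.

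Finally, for \(u\in\cU\), let \(P\) be the orthogonal projector onto \(\mathcal K^\perp\). Then
\[
P\int_{[t_0,t_1)}H(s)u(s)\dmu(s)=\int_{[t_0,t_1)}PH(s)u(s)\dmu(s)=0 .
\]
The integral is well defined because \(H\) is bounded and measurable, as noted in Proposition~\ref{prop:constant-forward-voc}, and \(u\in L_g^2\subset L_g^1\) on a set of finite measure. This gives \(\cR_g(t_0,t_1)\subset\mathcal K\).

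I expect the only delicate point to be the passage to the limit in the infinite product of jump factors. It is handled by the closedness of \(\mathcal K\) together with the convergence already established in Theorem~\ref{thm:constant}.
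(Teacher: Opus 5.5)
Your proposal is correct and follows essentially the paper's argument. The paper places \(\Phi_A^{\to}(t_1,s^+)\) in the closed unital subalgebra \(\Span\{I,A,\ldots,A^{n-1}\}\) (Cayley--Hamilton plus norm limits of polynomials in \(A\)), giving \(\Phi_A^{\to}(t_1,s^+)B=\sum_{k<n}\alpha_k(s)A^kB\); this is the same fact as your invariance of the closed subspace \(\Range\cC(A,B)\) under such limits, and your projector step just makes explicit the final integration that the paper states in one line.
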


\begin{proof}
Let
\[
 \mathcal A(A):=\Span\{I,A,\ldots,A^{n-1}\}.
\]
By the Cayley--Hamilton theorem, \(\mathcal A(A)\) is a unital subalgebra of
\(\R^{n\times n}\): every power \(A^k\), \(k\ge n\), is a linear combination of
lower powers. Being finite-dimensional, \(\mathcal A(A)\) is closed.

For every \(s<t_1\), the continuous exponential
\(\exp(A\mugc([s,t_1)))\) is the norm limit of polynomials in \(A\), and every
finite partial product of the jump factors
\(I+\Deltag(\tau)A\) is a polynomial in \(A\). Formula
\eqref{eq:constant-forward-postjump} and closedness of \(\mathcal A(A)\) therefore
imply
\[
 \Phi_A^{\to}(t_1,s^+)\in\mathcal A(A).
\]
Thus there are scalars \(\alpha_0(s),\ldots,\alpha_{n-1}(s)\) such that
\[
 \Phi_A^{\to}(t_1,s^+)B
 =\sum_{k=0}^{n-1}\alpha_k(s)A^kB.
\]
Every column of the left-hand side belongs to
\(\Range[B,AB,\ldots,A^{n-1}B]\). By
Proposition~\ref{prop:constant-forward-voc}, the endpoint operator for constant
coefficients is obtained by integrating these forward propagated directions.
Therefore
\[
 \cR_g(t_0,t_1)\subset\Range\cC(A,B).
\]
If the system is exactly controllable, the left-hand side is \(\R^n\), and therefore
the Kalman matrix has rank \(n\).
\end{proof}

\begin{counterexample}[The Kalman condition is not sufficient for an arbitrary derivator]
\label{ex:kalman-not-sufficient}
Let \(n=2\), \(m=1\),
\[
 A=
 \begin{pmatrix}
 0&1\\
 0&0
 \end{pmatrix},
 \qquad
 B=
 \begin{pmatrix}
 0\\1
 \end{pmatrix},
\]
and let
\[
 g(t)=
 \begin{cases}
 0,&t\le1,\\
 1,&t>1.
 \end{cases}
\]
On \([0,2]\), \(\mug=\delta_1\). Moreover,
\[
 \cC(A,B)=
 \begin{pmatrix}
 0&1\\
 1&0
 \end{pmatrix},
\]
so the Kalman matrix has rank \(2\), and
\(\Id+A\) is invertible. Nevertheless, for \(x(0)=0\),
\[
 x(2)=B\,u(1),
\]
because the control acts only at the single atom and no evolution occurs afterwards.
The reachable subspace is \(\Range B\), of dimension \(1\). Thus the system is not
controllable.
\end{counterexample}

Thus a rank condition involving only \(A\) and \(B\) cannot characterize controllability without an additional hypothesis on the temporal structure induced by \(g\).

\begin{lemma}[Stieltjes change of variables on a continuous component]
\label{lem:continuous-pushforward}
Let \(g\) be continuous on \([a,c]\). Then, for every
nonnegative Borel function \(\psi\) (and hence for every integrable Borel function),
\begin{equation}\label{eq:continuous-pushforward}
 \int_{[a,c)}\psi(g(s))\dmu(s)
 =\int_{[g(a),g(c))}\psi(r)\,dr.
\end{equation}
Equivalently, the pushforward of \(\mug|_{[a,c)}\) by \(g\) is Lebesgue measure on
\([g(a),g(c))\).
\end{lemma}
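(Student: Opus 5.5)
This is essentially Lemma~\ref{lem:continuous-cumulative-substitution} applied with \(Q=g|_{[a,c]}\). The plan is to check the hypotheses and then transfer the conclusion.

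First, observe that \(g\) restricted to \([a,c]\) is continuous and nondecreasing. The restriction of \(\mug\) to Borel subsets of \([a,c)\) agrees with the Lebesgue--Stieltjes measure \(\mu_Q\) of \(Q=g|_{[a,c]}\). Indeed, both satisfy \(\mu([r_1,r_2))=g(r_2)-g(r_1)\) for \(a\le r_1<r_2\le c\). Half-open intervals form a \(\pi\)-system generating the Borel \(\sigma\)-algebra of \([a,c)\), and both measures are finite there, so uniqueness of measures gives \(\mug|_{[a,c)}=\mu_Q|_{[a,c)}\).

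The only point needing care is whether \(g\) might have an atom at some point of \([a,c)\). Since \(g\) is continuous on \([a,c]\), for \(t\in[a,c)\) we have \(g(t^+)=g(t)\), with the right limit taken inside \([a,c]\). Hence \(\Deltag(t)=0\) on \([a,c)\). This matters only to confirm the normalization: no atom sits at the left endpoint \(a\), which the half-open interval \([a,c)\) would otherwise include. At \(c\), continuity from the left is irrelevant because \(c\) is excluded.

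Applying \eqref{eq:continuous-cumulative-substitution} with \(t=c\) then yields \eqref{eq:continuous-pushforward} for nonnegative Borel \(\psi\). The integrable case follows by splitting \(\psi=\psi^+-\psi^-\). The pushforward statement is the case \(\psi=\mathbf 1_E\) for Borel \(E\subset[g(a),g(c))\). No step is a genuine obstacle; the identification of the two measures on \([a,c)\) is the only place where the hypothesis of continuity on the closed interval is used.
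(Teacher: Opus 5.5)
Your proposal is correct and follows the paper's proof: both apply Lemma~\ref{lem:continuous-cumulative-substitution} to \(g|_{[a,c]}\) after identifying its Stieltjes measure with \(\mug|_{[a,c)}\) by agreement on half-open intervals. One small correction to your closing remark: that identification holds for any left-continuous nondecreasing \(g\) and does not use continuity at all. Continuity is instead used through the cited lemma's hypothesis, where it gives \(Q(\alpha_y)=y\) and rules out atoms at \(a\) or in \((a,c)\).
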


\begin{proof}
Apply Lemma~\ref{lem:continuous-cumulative-substitution} to the restriction of \(g\)
to \([a,c]\). Its associated Stieltjes measure is precisely
\(\mug|_{[a,c)}\), because both measures take the value
\(g(\beta)-g(\alpha)\) on every half-open interval
\([\alpha,\beta)\subset[a,c)\). Formula
\eqref{eq:continuous-pushforward} follows directly.
\end{proof}

\begin{theorem}[Kalman criterion under a nontrivial continuous component]\label{thm:continuous-window}
Assume that \(A\) and \(B\) are constant, \(A\) is \(g\)-regressive, and there is a
nondegenerate interval \([a,b]\subset[t_0,t_1)\) on which \(g\) is continuous and
nonconstant, equivalently \(g(b)>g(a)\). Then
\[
 \text{\eqref{eq:linear-control} is exactly controllable}
 \quad\Longleftrightarrow\quad
 \rank\cC(A,B)=n.
\]
\end{theorem}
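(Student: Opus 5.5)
The forward implication is already contained in Proposition~\ref{prop:kalman-necessary}, which requires no hypothesis on \(g\). So the plan concentrates on sufficiency: assuming \(\rank\cC(A,B)=n\), we verify criterion (iv) of Theorem~\ref{thm:gramian}. Let \(z\in\R^n\) satisfy
\[
 B^\top\Phi_A(t_1,s^+)^\top z=0\quad\text{for \(\mug\)-a.e.\ } s\in[t_0,t_1).
\]
The goal is to show \(z=0\). We restrict attention to \(s\in[a,b)\). Since \(g\) is continuous on \([a,b]\), there are no atoms in \([a,b)\). Hence \(\mug|_{[a,b)}=\mugc|_{[a,b)}\) and \(\Phi_A(t_1,s^+)=\Phi_A(t_1,s)\) there.

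\textbf{Step 1 (factorization).} Using Theorem~\ref{thm:constant} together with the cocycle identity, write
\[
 \Phi_A(t_1,s)=\Phi_A(t_1,b)\,\Phi_A(b,s).
\]
For \(s\in[a,b]\), \(\Phi_A(b,s)=\exp\bigl(A(g(b)-g(s))\bigr)\), because the interval contains no atoms and \(\mugc([s,b))=g(b)-g(s)\). Put \(w:=\Phi_A(t_1,b)^\top z\). The hypothesis then becomes
\[
 B^\top\exp\bigl(A^\top(g(b)-g(s))\bigr)w=0\quad\text{for \(\mug\)-a.e.\ } s\in[a,b).
\]
By regressivity and Theorem~\ref{thm:inverse}, \(\Phi_A(t_1,b)\) is invertible, so \(z=0\) if and only if \(w=0\).

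\textbf{Step 2 (transfer to Lebesgue measure).} This is the main obstacle, because a \(\mug\)-null exceptional set must be converted into a Lebesgue-null set of values of the parameter \(r=g(b)-g(s)\). Define the continuous function
\[
 \psi(r):=\norm{B^\top\exp(A^\top(g(b)-r))w}^2 .
\]
Apply Lemma~\ref{lem:continuous-pushforward} on \([a,b]\) with this \(\psi\), which is nonnegative and Borel. It gives
\[
 0=\int_{[a,b)}\psi(g(s))\dmu(s)=\int_{[g(a),g(b))}\psi(r)\,dr.
\]
Since \(\psi\) is continuous and nonnegative, it vanishes identically on \([g(a),g(b)]\), which is a nondegenerate interval because \(g(b)>g(a)\). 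Therefore
\[
 \varphi(\theta):=B^\top e^{A^\top\theta}w=0\quad\text{for all }\theta\in[0,g(b)-g(a)].
\]

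\textbf{Step 3 (Kalman).} The function \(\varphi\) is real-analytic and vanishes on an interval containing \(0\). Differentiating \(k\) times at \(\theta=0\) gives \(B^\top(A^\top)^kw=0\) for \(k=0,\ldots,n-1\), that is, \(\cC(A,B)^\top w=0\). The rank condition forces \(w=0\), hence \(z=0\). Theorem~\ref{thm:gramian} then yields exact controllability.

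The only delicate point is Step 2. The naive argument (\(\mug\)-a.e.\ vanishing plus continuity in \(s\)) fails, since \(s\mapsto\psi(g(s))\) may be constant on flat pieces of \(g\) and \(\mug\) may be singular. The pushforward lemma bypasses this by working directly in the variable \(g(s)\).
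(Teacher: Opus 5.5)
Your proof is correct and follows essentially the same route as the paper: necessity from Proposition~\ref{prop:kalman-necessary}, and sufficiency via criterion (iv) of Theorem~\ref{thm:gramian}, then the cocycle factorization through an atom-free window, Lemma~\ref{lem:continuous-pushforward} to pass from $\mu_g$-a.e.\ to Lebesgue-a.e.\ vanishing, and finally differentiation at $\theta=0$ combined with the rank condition. The differences are cosmetic: you work on $[a,b)$ directly instead of an auxiliary $[a,c)$ with $c<b$, which is legitimate because only the half-open interval $[s,b)$ enters $\Phi_A(b,s)$, so a possible atom at $b$ is absorbed into the invertible factor $\Phi_A(t_1,b)$; and you upgrade a.e.\ vanishing to vanishing everywhere by continuity of $\psi$ rather than by the identity theorem for real-analytic functions.
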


\begin{proof}
Necessity is Proposition~\ref{prop:kalman-necessary}. Assume now that
\(\rank\cC(A,B)=n\). By Theorem~\ref{thm:gramian}, it is enough to prove that
\[
 B^\top\Phi_A(t_1,s^+)^\top z=0
 \quad\mug\text{-a.e. }s\in[t_0,t_1)
\]
implies \(z=0\). Since \(g\) is continuous and nonconstant on \([a,b]\), monotonicity gives
\(g(b)>g(a)\), and continuity allows us to choose \(c\in(a,b)\) such that
\(g(c)>g(a)\). There are no atoms in \([a,c]\), so \(s^+=s\) there. Restricting the
annihilation identity to
\([a,c)\) gives
\begin{equation}\label{eq:continuous-annihilation}
 B^\top\Phi_A(t_1,s)^\top z=0
 \quad\mug\text{-a.e. }s\in[a,c).
\end{equation}

By the cocycle identity and the constant-coefficient formula,
\[
 \Phi_A(t_1,s)
 =\Phi_A(t_1,c)\Phi_A(c,s)
 =\Phi_A(t_1,c)\exp(A[g(c)-g(s)]).
\]
Set \(w=\Phi_A(t_1,c)^\top z\). Since \(A\) is regressive,
\(\Phi_A(t_1,c)\) is invertible. Equation \eqref{eq:continuous-annihilation} becomes
\[
 B^\top\exp(A^\top[g(c)-g(s)])w=0
 \quad\mug\text{-a.e. }s\in[a,c).
\]
Lemma~\ref{lem:continuous-pushforward}, followed by the reflection
\(\theta=g(c)-r\), shows that
\[
 F(\theta):=B^\top e^{A^\top\theta}w=0
\]
for Lebesgue-almost every
\(\theta\in(0,g(c)-g(a)]\). Every component of \(F\) is an entire real-analytic function. By the identity
theorem for real-analytic functions, a component that vanishes on a set with an
accumulation point vanishes identically; see, e.g.,
\citep{KrantzParks2002}. Since a full-measure subset of a nondegenerate
interval has accumulation points, the almost-everywhere identity implies
\(F\equiv0\). Differentiating at
\(\theta=0\) gives
\[
 B^\top(A^\top)^kw=0,
 \qquad k=0,1,\ldots,n-1.
\]
Equivalently,
\[
 \cC(A,B)^\top w=0.
\]
The Kalman rank condition implies \(w=0\). Invertibility of
\(\Phi_A(t_1,c)^\top\) then yields \(z=0\). The Gramian criterion completes the
proof.
\end{proof}

\begin{proposition}[Purely atomic criterion]\label{prop:atomic}
Assume that the restriction of \(\mug\) to \([t_0,t_1)\) is purely atomic, with atoms
\(\{\tau_j\}_{j\in J}\). Then
\[
 W(t_0,t_1)
 =
 \sum_{j\in J}
 \Deltag(\tau_j)H(\tau_j)H(\tau_j)^\top,
 \qquad
 H(\tau_j)=\Phi(t_1,\tau_j^+)B(\tau_j).
\]
The system is exactly controllable if and only if
\[
 \Span_{j\in J}\Range H(\tau_j)=\R^n.
\]
If \(J=\{1,\ldots,N\}\), this is equivalent to
\[
 \rank
 \begin{bmatrix}
 H(\tau_1)&H(\tau_2)&\cdots&H(\tau_N)
 \end{bmatrix}
 =n.
\]
\end{proposition}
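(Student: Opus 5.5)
The plan is to read off everything from the continuous--atomic decomposition \eqref{eq:W-decomp} and the Gramian criterion, Theorem~\ref{thm:gramian}. Since the restriction of \(\mug\) to \([t_0,t_1)\) is purely atomic, \(\mugc|_{[t_0,t_1)}=0\). The first integral in \eqref{eq:W-decomp} therefore vanishes, and \(W\) is the (at most countable) sum of the atomic terms. Convergence of this sum is not an issue. Each term is positive semidefinite, \(\norm{H(\tau_j)}\le M_{t_1}\norm{B(\tau_j)}\), and
\[
\sum_j\Deltag(\tau_j)\norm{B(\tau_j)}^2=\norm{B}_{L_g^2}^2<\infty .
\]
Here \(H(\tau_j)\) is unambiguous, because each atom has positive measure and the values of \(A\) and \(B\) there are fixed by their classes. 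The post-jump propagator \(\Phi(t_1,\tau_j^+)\) is given by \eqref{eq:postjump}.

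For the controllability equivalence I will use the quadratic form \eqref{eq:gramian-quadratic-form}. For a purely atomic measure it reads
\[
z^\top W z=\sum_{j\in J}\Deltag(\tau_j)\norm{H(\tau_j)^\top z}^2 .
\]
Every \(\Deltag(\tau_j)>0\), so \(z^\top Wz=0\) holds if and only if \(H(\tau_j)^\top z=0\) for all \(j\), that is, if and only if \(z\perp\Range H(\tau_j)\) for every \(j\). Equivalently, \(z\in\bigl(\Span_j\Range H(\tau_j)\bigr)^\perp\). Thus \(W\) is positive definite exactly when this span is \(\R^n\), and Theorem~\ref{thm:gramian} gives the equivalence with exact controllability.

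Alternatively, one can see this through Proposition~\ref{prop:reachable}. For a purely atomic measure, "\(\mug\)-a.e.\ \(s\)" means "every atom", because the complement of the atom set is \(\mug\)-null in \([t_0,t_1)\). Hence the essential span coincides with the ordinary span over the atoms.

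In the finite case \(J=\{1,\ldots,N\}\), the span of the ranges \(\Range H(\tau_j)\) is the column space of the block matrix \([H(\tau_1)\ \cdots\ H(\tau_N)]\). This span equals \(\R^n\) if and only if the block matrix has rank \(n\).

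The only delicate point is the a.e.-versus-every-atom step. It must be made explicit that the non-atomic part vanishes on \([t_0,t_1)\), so that no null-set exceptions remain. Beyond that, the argument is routine.
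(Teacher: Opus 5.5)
Your proposal is correct and follows essentially the same route as the paper. In both, the non-atomic integral in \eqref{eq:W-decomp} drops out, absolute convergence comes from \(\sum_j\Deltag(\tau_j)\norm{H(\tau_j)}^2\le M_{t_1}^2\norm{B}_{L_g^2}^2\), and the atomic quadratic form with strictly positive masses is combined with Theorem~\ref{thm:gramian} and the block-matrix rank; your alternative remark via Proposition~\ref{prop:reachable} is a valid equivalent view that the paper does not need.
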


\begin{proof}
Since \(\mugc=0\) on the control interval, the decomposition
\eqref{eq:W-decomp} gives
\[
 W=\sum_{j\in J}\Deltag(\tau_j)H(\tau_j)H(\tau_j)^\top.
\]
By Proposition~\ref{prop:propagator-measurable},
\[
 M_\Phi:=\sup_{s\in[t_0,t_1)}\norm{\Phi(t_1,s^+)}<\infty.
\]
Hence
\[
 \sum_{j\in J}\Deltag(\tau_j)\norm{H(\tau_j)}^2
 \le M_\Phi^2\sum_{j\in J}\Deltag(\tau_j)\norm{B(\tau_j)}^2
 =M_\Phi^2\norm{B}_{L_g^2}^2<\infty.
\]
Since \(\norm{H H^\top}\le\norm{H}^2\), this estimate proves absolute convergence of
the Gramian series in matrix norm.
For \(z\in\R^n\),
\begin{equation}\label{eq:atomic-gramian-qf}
 z^\top Wz
 =\sum_{j\in J}\Deltag(\tau_j)
  \norm{H(\tau_j)^\top z}^2.
\end{equation}
All summands are nonnegative and every mass is positive. Hence
\(z^\top Wz=0\) if and only if
\(H(\tau_j)^\top z=0\) for every \(j\in J\). This is equivalent to \(z\) being
orthogonal to
\(\Span_{j\in J}\Range H(\tau_j)\). The Gramian is positive definite exactly when
that span is \(\R^n\). For finite \(J\), the span condition is equivalent to the
rank of the displayed block matrix.
\end{proof}

\subsection{Finite atomic derivators and temporal richness}

For constant matrices \(A\in\R^{n\times n}\) and
\(B\in\R^{n\times m}\), define the truncated Krylov spaces
\begin{equation}\label{eq:krylov-spaces}
 \mathcal K_r(A,B)
 :=\operatorname{span}\{\operatorname{Ran}B,
 \operatorname{Ran}AB,\ldots,\operatorname{Ran}A^{r-1}B\},
 \qquad r\ge1,
\end{equation}
and \(\mathcal K_0(A,B):=\{0\}\). The controllability index used here is
\begin{equation}\label{eq:controllability-index}
 \nu(A,B):=
 \min\{r\ge1:\mathcal K_r(A,B)=\mathcal K_n(A,B)\}.
\end{equation}
If the pair is Kalman controllable, this is the least \(r\) for which
\(\mathcal K_r(A,B)=\R^n\). For a multi-input pair this is the largest of the
classical controllability indices; see, for example,
\citep{Kailath1980,Sontag1998}.

We begin with the polynomial structure generated by a finite atomic measure.

\begin{lemma}[A Newton-type polynomial basis]\label{lem:atomic-polynomial-basis}
Let \(\delta_1,\ldots,\delta_N\) be nonzero real numbers and define
\[
 p_j(z):=\prod_{k=j+1}^{N}(1+\delta_k z),
 \qquad j=1,\ldots,N,
\]
with the empty product \(p_N=1\). Then
\(p_1,\ldots,p_N\) form a basis of the polynomials of degree at most \(N-1\).
\end{lemma}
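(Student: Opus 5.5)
The plan is to show first that each \(p_j\) lies in the space \(\mathcal P_{N-1}\) of real polynomials of degree at most \(N-1\), and then to show that \(p_1,\ldots,p_N\) are linearly independent. Since \(\dim\mathcal P_{N-1}=N\), these two facts together give the basis property. The first step is immediate: \(p_j\) is a product of \(N-j\) linear factors, so \(\deg p_j=N-j\le N-1\). Because every \(\delta_k\neq0\), the leading coefficient of \(p_j\) is \(\prod_{k=j+1}^N\delta_k\neq0\). Hence the degree of \(p_j\) is exactly \(N-j\).

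For linear independence, the most direct route is a degree argument. The degrees \(N-1,N-2,\ldots,0\) of \(p_1,\ldots,p_N\) are pairwise distinct, and all leading coefficients are nonzero. Consequently, the matrix of coefficients of \(p_N,p_{N-1},\ldots,p_1\) in the monomial basis \(1,z,\ldots,z^{N-1}\) is triangular with nonzero diagonal entries, so it is invertible.

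Alternatively, one can give an inductive argument in the Newton style, which better reflects the nested structure that appears later with the atomic transition matrices. Suppose \(\sum_j c_jp_j=0\). Every \(p_j\) with \(j<N\) contains the factor \(1+\delta_Nz\). Evaluating at \(z=-1/\delta_N\), where \(\delta_N\neq0\) is used, kills all of these terms and leaves \(c_N p_N=c_N=0\). Dividing the remaining identity by \(1+\delta_Nz\) reduces the problem to the same statement for \(\delta_1,\ldots,\delta_{N-1}\), and induction on \(N\) finishes the argument.

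There is no real obstacle in this proof. The only point that needs care is where the hypothesis \(\delta_k\neq0\) enters. It guarantees that the degrees are exact, or equivalently that the evaluation point \(-1/\delta_N\) exists. Without this hypothesis the family could degenerate: for instance, if \(\delta_N=0\), then \(p_{N-1}=p_N=1\).
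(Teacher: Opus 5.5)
Your proposal is correct and its main argument is the paper's own: the exact degrees \(N-j\) with nonzero leading coefficients \(\prod_{k=j+1}^N\delta_k\) make the coefficient matrix of \(p_N,\ldots,p_1\) in the monomial basis triangular with nonzero diagonal, and the dimension count finishes the proof. Your alternative induction (evaluate at \(z=-1/\delta_N\), then divide by \(1+\delta_N z\)) is also valid, but it is a supplement rather than a different route.
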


\begin{proof}
For \(j=1,\ldots,N\), the polynomial \(p_j\) has degree \(N-j\), and its leading
coefficient is
\[
 \gamma_j:=\prod_{k=j+1}^N\delta_k,
\]
with the convention \(\gamma_N=1\). Order the family as
\(p_N,p_{N-1},\ldots,p_1\), and express it in the monomial basis
\(1,z,\ldots,z^{N-1}\). The resulting coefficient matrix is upper triangular, and
its diagonal is
\[
 1,\ \delta_N,\ \delta_{N-1}\delta_N,\ \ldots,\
 \delta_2\delta_3\cdots\delta_N.
\]
Every diagonal entry is nonzero. Hence the coefficient matrix is invertible, so the
\(p_j\) are linearly independent. Since there are \(N\) of them in the
\(N\)-dimensional space of polynomials of degree at most \(N-1\), they form a basis.
\end{proof}

\begin{theorem}[Reachable subspace for a finite atomic derivator]
\label{thm:atomic-classification}
Assume that \(A\) and \(B\) are constant and that, on \([t_0,t_1)\),
\begin{equation}\label{eq:finite-atomic-clock}
 \mu_g=\sum_{j=1}^{N}\delta_j\delta_{\tau_j},
 \qquad
 t_0\le\tau_1<\cdots<\tau_N<t_1,
 \qquad \delta_j>0.
\end{equation}
Then
\begin{equation}\label{eq:atomic-krylov-identity}
 \mathcal R_g(t_0,t_1)=\mathcal K_N(A,B).
\end{equation}
Here \(\mathcal R_g(t_0,t_1)\) denotes the range of the forward endpoint map from
Proposition~\ref{prop:constant-forward-voc}, so the notation does not presuppose
regressivity. Consequently, the following are equivalent:
\begin{enumerate}[label=\textnormal{(\roman*)}]
\item the Stieltjes system is exactly controllable on \([t_0,t_1]\);
\item \(\operatorname{rank}[B,AB,\ldots,A^{N-1}B]=n\);
\item \((A,B)\) is Kalman controllable and \(N\ge\nu(A,B)\).
\end{enumerate}
In particular, the reachable subspace depends on the number of atoms, but not on their locations or positive masses. No regressivity assumption is required, since the proof uses only the forward atomic recurrence. In this setting, the post-jump propagator is the ordered product of the subsequent jump matrices.
\end{theorem}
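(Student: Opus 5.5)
Since \(\mugc=0\) on \([t_0,t_1)\), the forward post-jump propagator \eqref{eq:constant-forward-postjump} from Proposition~\ref{prop:constant-forward-voc} reduces to a finite product. For the atom \(\tau_j\) it is
\[
 \Phi_A^{\to}(t_1,\tau_j^+)=\prod_{k=j+1}^{N}(I+\delta_kA)=p_j(A),
\]
where \(p_j\) is the polynomial of Lemma~\ref{lem:atomic-polynomial-basis}. All factors are polynomials in \(A\) and commute, so the order is irrelevant. The forward endpoint map \eqref{eq:constant-forward-voc} therefore reads
\[
 \cK u=\sum_{j=1}^N \delta_j\, p_j(A)B\,u(\tau_j).
\]
Under the purely atomic measure, \(\cU=\Lg{2}\) is canonically \((\R^m)^N\): a control is just the tuple \((u(\tau_1),\ldots,u(\tau_N))\), and these values can be chosen freely. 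Because each \(\delta_j>0\), the range is
\[
 \mathcal R_g(t_0,t_1)=\operatorname{span}_{j}\Range\bigl(p_j(A)B\bigr).
\]
This is the forward analogue of Proposition~\ref{prop:atomic}, and it uses no inverse of \(I+\delta_kA\), so no regressivity is needed.

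To identify this span with \(\mathcal K_N(A,B)\), I would prove two inclusions.
\begin{itemize}
\item[\(\subset\)] Each \(p_j\) has degree \(N-j\le N-1\), so every column of \(p_j(A)B\) lies in \(\mathcal K_N(A,B)\).
\item[\(\supset\)] Lemma~\ref{lem:atomic-polynomial-basis} applies because the \(\delta_k\) are nonzero. It gives coefficients \(c_{k,j}\) with \(z^k=\sum_j c_{k,j}p_j(z)\) for \(0\le k\le N-1\). Hence \(A^kB=\sum_j c_{k,j}\,p_j(A)B\), and so \(\Range A^kB\) lies in the span.
\end{itemize}
This proves \eqref{eq:atomic-krylov-identity}. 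The identity depends only on \(N\), because the masses enter only through the nonzero leading coefficients in the lemma and the locations \(\tau_j\) do not enter at all.

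The equivalences then follow quickly. Exact controllability means surjectivity of the forward endpoint map: the displacement \(x_1-\Phi_A(t_1,t_0)x_0\) ranges over \(\R^n\), exactly as in the proof of Theorem~\ref{thm:gramian}. By the Krylov identity this is the same as \(\mathcal K_N(A,B)=\R^n\), which is the rank condition (ii). For (ii)\(\Leftrightarrow\)(iii), I use that the Krylov spaces \(\mathcal K_r(A,B)\) increase in \(r\) and stabilize from \(r=\nu(A,B)\) onward at \(\mathcal K_n(A,B)\).
\begin{itemize}
\item If \(\mathcal K_N=\R^n\), then \(\mathcal K_n=\R^n\) as well. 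For \(N\le n\) this holds because \(\mathcal K_N\subset\mathcal K_n\). For \(N>n\) it holds by Cayley--Hamilton, which gives \(\mathcal K_N=\mathcal K_n\). So the pair is Kalman controllable. Moreover \(\mathcal K_N=\mathcal K_n\), and since the chain strictly increases before \(\nu\), this forces \(N\ge\nu\).
\item Conversely, Kalman controllability together with \(N\ge\nu\) gives \(\mathcal K_N\supset\mathcal K_\nu=\R^n\).
\end{itemize}

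The main obstacle is bookkeeping rather than depth. Three points need care:
\begin{itemize}
\item The forward propagator attached to \(\tau_j\) must contain exactly the factors for the atoms \(k>j\), excluding \(\tau_j\) itself. This is the post-jump convention, and it is what makes the indices match \(p_j\).
\item The strict growth of \(\mathcal K_r\) below \(\nu\) must be justified. The standard argument: if \(\mathcal K_{r+1}=\mathcal K_r\), then \(A\mathcal K_r\subset\mathcal K_r\), and the chain is constant from \(r\) onward.
\item The possible atom at \(\tau_1=t_0\) should not cause trouble. It is treated like any other atom, since the interval \([t_0,t_1)\) includes \(t_0\).
\end{itemize}
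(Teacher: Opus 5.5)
Your proposal is correct and follows essentially the same route as the paper. You reduce the endpoint map to $\sum_j \delta_j\, p_j(A)B\,u(\tau_j)$ and then use Lemma~\ref{lem:atomic-polynomial-basis} to identify $\Span_j\Range p_j(A)B$ with $\mathcal K_N(A,B)$. The differences are cosmetic: the paper derives the endpoint formula by iterating the jump recurrence \eqref{eq:atomic-recurrence} rather than specializing \eqref{eq:constant-forward-voc}, and it treats (ii)$\Leftrightarrow$(iii) as immediate from the definition of $\nu(A,B)$, where you supply the Cayley--Hamilton and monotonicity details.
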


\begin{proof}
Because the measure has no mass between consecutive atoms, the state is constant on
each interval \((\tau_j,\tau_{j+1}]\). Put \(u_j=u(\tau_j)\). At \(\tau_j\), the
canonical jump relation is
\begin{equation}\label{eq:atomic-recurrence}
 x(\tau_j^+)=(I+\delta_jA)x(\tau_j)+\delta_jBu_j.
\end{equation}
Starting from \(x(t_0)=x_0\) and iterating \eqref{eq:atomic-recurrence}, one obtains
\begin{align}
 x(t_1)
 ={}&\left[\prod_{k=1}^N(I+\delta_kA)\right]x_0 \notag\\
 &+\sum_{j=1}^N\delta_j
   \left[\prod_{k=j+1}^N(I+\delta_kA)\right]Bu_j.
 \label{eq:atomic-endpoint-recurrence}
\end{align}
The factors commute, but the displayed order also records their chronological action.
No inverse is used, so regressivity is unnecessary.

For reachability from the origin, the first term in
\eqref{eq:atomic-endpoint-recurrence} disappears. Since \(\delta_j>0\), scalar
multiplication by \(\delta_j\) does not change a column space. Thus
\[
 \cR_g(t_0,t_1)
 =\Span_{1\le j\le N}\Range p_j(A)B,
 \qquad
 p_j(z)=\prod_{k=j+1}^N(1+\delta_kz).
\]
Lemma~\ref{lem:atomic-polynomial-basis} implies both expansions
\[
 z^r=\sum_{j=1}^N\alpha_{rj}p_j(z),
 \qquad 0\le r<N,
\]
and
\[
 p_j(z)=\sum_{r=0}^{N-1}\beta_{jr}z^r.
\]
Evaluating at \(A\) and multiplying by \(B\) gives the two inclusions
\[
 \Span_{0\le r<N}\Range A^rB
 \subset
 \Span_j\Range p_j(A)B
 \subset
 \Span_{0\le r<N}\Range A^rB.
\]
Hence \(\cR_g=\mathcal K_N(A,B)\).

The system is exactly controllable if and only if this space equals \(\R^n\), which
is statement (ii). By definition of \(\nu(A,B)\), this is equivalent to Kalman
controllability together with \(N\ge\nu(A,B)\), proving (iii).
\end{proof}

\begin{corollary}[Finite time scales]\label{cor:finite-time-scale}
Let
\[
 \mathbb T=\{t_0<t_1<\cdots<t_N\}
\]
be a finite time scale and consider
\[
 x^\Delta(t)=Ax(t)+Bu(t).
\]
Then the reachable subspace at \(t_N\) from the origin is
\[
 \mathcal R_{\mathbb T}(t_0,t_N)
 =\Span\{\Range B,\Range AB,\ldots,\Range A^{N-1}B\}.
\]
Hence exact controllability in \(N\) steps is equivalent to
\[
 \rank[B,AB,\ldots,A^{N-1}B]=n.
\]
If \((A,B)\) is Kalman controllable, the least number of right-scattered control
instants needed for exact controllability equals \(\nu(A,B)\).
\end{corollary}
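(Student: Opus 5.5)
The plan is to reduce the corollary to Theorem~\ref{thm:atomic-classification} through the embedding of time scales into Stieltjes derivators described in Section~\ref{sec:comparison}.

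First, on the convex hull \([t_0,t_N]\) we take the forward projection derivator
\[
 g_{\mathbb T}(t)=\inf\{r\in\mathbb T:r\ge t\}.
\]
It is nondecreasing and left-continuous, and it is piecewise constant: \(g_{\mathbb T}(t)=t_j\) for \(t\in(t_{j-1},t_j]\). Its only jumps on \([t_0,t_N)\) are at the right-scattered points \(t_0,\ldots,t_{N-1}\), where
\[
 \Deltag(t_{j-1})=t_j-t_{j-1}=\mu_{\mathbb T}(t_{j-1})>0 .
\]
Hence, on \([t_0,t_N)\), the measure \(\mu_{g_{\mathbb T}}\) has the form \eqref{eq:finite-atomic-clock} with exactly \(N\) atoms \(\tau_j=t_{j-1}\) and masses \(\delta_j=t_j-t_{j-1}\). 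All these points lie in \([t_0,t_N)\), as that theorem requires with \(t_1\) replaced by \(t_N\).

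Second, we identify the dynamics. The delta equation \(x^\Delta(t_{j-1})=Ax(t_{j-1})+Bu(t_{j-1})\) means
\[
 x(t_j)=(I+\delta_jA)x(t_{j-1})+\delta_jBu(t_{j-1}).
\]
This is exactly the atomic recurrence \eqref{eq:atomic-recurrence}. For the canonical Stieltjes solution the state is constant on \((\tau_j,\tau_{j+1}]\), so \(x(\tau_j^+)=x(t_j)\) and the two state sequences coincide.

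The admissible controls also match. A control is a value \(u_j\in\R^m\) at each right-scattered instant, and \(L^2_g\) on a purely atomic finite measure is just \(\R^{mN}\). The reachable sets from the origin therefore agree, \(\mathcal R_{\mathbb T}(t_0,t_N)=\mathcal R_g(t_0,t_N)\). No regressivity is needed, since only the forward recurrence is used.

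Third, we apply Theorem~\ref{thm:atomic-classification}. It gives \(\mathcal R_{\mathbb T}(t_0,t_N)=\mathcal K_N(A,B)\), and the equivalence (i)\(\Leftrightarrow\)(ii) of that theorem gives the rank criterion.

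For the final claim we argue as follows. Any time scale with \(M\) right-scattered control instants (i.e.\ \(M+1\) points, the last one being the terminal time) yields the reachable space \(\mathcal K_M(A,B)\). By (iii) of that theorem this equals \(\R^n\) if and only if \(M\ge\nu(A,B)\). Choosing \(M=\nu(A,B)\) is possible with any spacing, so the minimum is exactly \(\nu(A,B)\).

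The only delicate point is the bookkeeping in the embedding. The terminal point \(t_N\) must not count as an atom, and the Stieltjes endpoint value \(x(t_N)\) must coincide with the time-scale value at \(t_N\). Both follow from left-continuity of \(g_{\mathbb T}\) and the constancy of canonical solutions on flat intervals (Proposition~\ref{prop:canonical}).
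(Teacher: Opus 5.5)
Your proposal is correct and follows the same route as the paper. Like the paper, you embed \(\mathbb T\) via the forward-projection derivator, so that the right-scattered points \(t_0,\ldots,t_{N-1}\) become exactly \(N\) positive atoms with masses equal to the graininess. You then identify the delta recurrence with the atomic recurrence and invoke Theorem~\ref{thm:atomic-classification}. Your additional bookkeeping makes explicit what the paper's two-line proof leaves implicit: left-continuity keeps \(t_N\) from being an atom, canonical solutions are constant between atoms, \(L_g^2\) is identified with \(\R^{mN}\), and the minimum count follows from item (iii).
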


\begin{proof}
The right-scattered points in \([t_0,t_N)_{\mathbb T}\) are
\(t_0,\ldots,t_{N-1}\), with positive graininess
\(\delta_j=t_{j+1}-t_j\). Under the Stieltjes embedding these are precisely \(N\)
positive atoms, and the delta equation becomes
\[
 x(t_{j+1})=(I+\delta_jA)x(t_j)+\delta_jBu(t_j).
\]
The conclusion follows from Theorem~\ref{thm:atomic-classification}.
\end{proof}

\begin{remark}[Relation with finite-horizon discrete-time reachability]
The truncated Krylov space in Theorem~\ref{thm:atomic-classification} is the familiar
finite-horizon reachability space of a constant discrete-time system; see
\citep{HoKalmanNarendra1963,Kailath1980}. The additional observation here is that the
nonuniform step matrices \(I+\delta_jA\), for arbitrary nonzero masses, generate a
Newton basis and therefore exactly the same Krylov space. The masses affect the
Gramian and energy, but not the reachable subspace as long as they remain nonzero.
\end{remark}

\begin{corollary}[Minimum number of atoms for a purely atomic derivator]
\label{cor:min-atoms}
If \((A,B)\) is Kalman controllable, the minimum number of positive atoms among purely
atomic derivators that yields exact controllability is \(\nu(A,B)\).
\end{corollary}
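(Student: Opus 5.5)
The corollary follows directly from Theorem~\ref{thm:atomic-classification}. The argument has two halves: a lower bound showing that fewer than \(\nu(A,B)\) atoms never suffice, and an explicit construction showing that \(\nu(A,B)\) atoms do. Throughout, ``purely atomic derivator'' means that the restriction of \(\mug\) to the control interval \([t_0,t_1)\) is a finite sum of positive point masses, as in \eqref{eq:finite-atomic-clock}. Only finitely many atoms need to be considered, because the minimum is taken over atom counts.

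\textbf{Lower bound.} Let \(g\) be any purely atomic derivator with exactly \(N\) positive atoms in \([t_0,t_1)\), and suppose the system is exactly controllable. By Theorem~\ref{thm:atomic-classification}, the reachable subspace is \(\mathcal K_N(A,B)\), so exact controllability forces \(\mathcal K_N(A,B)=\R^n\). By the definition \eqref{eq:controllability-index}, together with \(\mathcal K_n(A,B)=\R^n\) from Kalman controllability, we get \(N\ge\nu(A,B)\).

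The case of infinitely many atoms does not lower the count, since the quantity being minimized is the number of atoms. Degenerate derivators with zero atoms give \(\mathcal R_g=\{0\}\), which is not \(\R^n\) because \(n\ge1\).

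\textbf{Attainment.} Set \(N=\nu(A,B)\) and fix points \(t_0\le\tau_1<\cdots<\tau_N<t_1\). Define
\[
 g(t)=\sum_{j=1}^{N}\mathbf 1_{(\tau_j,\infty)}(t).
\]
This function is nondecreasing and left-continuous, and its measure on \([t_0,t_1)\) is \(\sum_j\delta_{\tau_j}\), with all masses equal to \(1\). Theorem~\ref{thm:atomic-classification}(iii) then gives exact controllability, because the pair is Kalman controllable and \(N\ge\nu(A,B)\). If one wants exact controllability in the sense of Definition via Theorem~\ref{thm:gramian}, regressivity is not needed: the theorem is phrased through the forward endpoint map. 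Alternatively, masses \(\delta_j\) can be chosen to avoid \(-1/\lambda\) for the eigenvalues \(\lambda\) of \(A\), so that \(A\) is also \(g\)-regressive.

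\textbf{Main subtlety.} The only delicate point is interpretive rather than technical: the minimum must be understood over derivators whose measure on the control interval has finitely many atoms. I would state explicitly that the result is independent of the locations and masses, as Theorem~\ref{thm:atomic-classification} guarantees. No further estimates are required.
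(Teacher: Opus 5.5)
Your proposal is correct and follows the paper's approach: the paper treats this corollary as an immediate consequence of the equivalence (i)$\Leftrightarrow$(iii) in Theorem~\ref{thm:atomic-classification}. Necessity gives $N\ge\nu(A,B)$, and any purely atomic derivator with $\nu(A,B)$ positive atoms, such as your unit-mass step function, attains the bound. Your side remark that regressivity is unnecessary, or can be arranged by choosing masses that avoid $-1/\lambda$, is a harmless extra.
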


\begin{remark}[Independence of the atomic masses]
Theorem~\ref{thm:atomic-classification} holds for every choice of positive masses. Once the number of atoms reaches the controllability index, all such choices yield the same reachable subspace. The Gramian and the minimum control energy, however, still depend on the masses.
\end{remark}

For \(\boldsymbol\delta=(\delta_1,\ldots,\delta_N)\in(0,\infty)^N\), write
\begin{equation}\label{eq:atomic-gramian-delta}
 W_N(\boldsymbol\delta)
 :=\sum_{j=1}^{N}\delta_j p_j(A)BB^\top p_j(A)^\top.
\end{equation}
This is a polynomial, hence continuous, matrix-valued function of the masses.
For clarity, the minimum-energy interpretation of this matrix does not require
regressivity. Indeed, identify the atomic control space with
\[
 \cU_N=(\R^m)^N,
 \qquad
 \norm{u}_{\cU_N}^2=\sum_{j=1}^N\delta_j\norm{u_j}^2,
\]
and define
\[
 \cK_Nu=\sum_{j=1}^N\delta_jp_j(A)Bu_j.
\]
A direct computation with the weighted inner product gives
\[
 (\cK_N^*z)_j=B^\top p_j(A)^\top z,
 \qquad
 \cK_N\cK_N^*=W_N(\boldsymbol\delta).
\]
Consequently, the Hilbert-space proof of Theorem~\ref{thm:gramian} and
Corollary~\ref{cor:minenergy} applies verbatim: \(W_N\) is positive definite exactly
when the atomic system is controllable, and the minimum energy required to generate a
displacement \(d\) is
\begin{equation}\label{eq:atomic-min-energy}
 \min_{\cK_Nu=d}\norm{u}_{\cU_N}^2
 =d^\top W_N(\boldsymbol\delta)^{-1}d.
\end{equation}

\begin{theorem}[Degeneration of the minimum control energy]
\label{thm:energy-degeneration}
Assume that \((A,B)\) is Kalman controllable and
\(\nu(A,B)=N\). Let
\(\boldsymbol\delta^{(r)}\in(0,\infty)^N\) converge to a vector
\(\bar{\boldsymbol\delta}\in[0,\infty)^N\) with exactly one zero component and all
other components positive. Then
\begin{equation}\label{eq:eigenvalue-degeneration}
 \lambda_{\min}\bigl(W_N(\boldsymbol\delta^{(r)})\bigr)\longrightarrow0,
 \qquad
 \bigl\|W_N(\boldsymbol\delta^{(r)})^{-1}\bigr\|\longrightarrow\infty.
\end{equation}
Consequently, the supremum of the minimum-energy cost over unit terminal displacements diverges.
\end{theorem}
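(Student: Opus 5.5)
The plan is a continuity argument for \(W_N\), followed by a rank count at the limit point. The map \(\boldsymbol\delta\mapsto W_N(\boldsymbol\delta)\) in \eqref{eq:atomic-gramian-delta} is polynomial in the masses, so it extends continuously to \([0,\infty)^N\). Moreover \(\lambda_{\min}\) is continuous on symmetric matrices; for instance it is \(1\)-Lipschitz in operator norm by Weyl's inequality. Hence
\[
 \lambda_{\min}\bigl(W_N(\boldsymbol\delta^{(r)})\bigr)\to\lambda_{\min}\bigl(W_N(\bar{\boldsymbol\delta})\bigr),
\]
and it suffices to prove that \(W_N(\bar{\boldsymbol\delta})\) is singular. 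The second statement in \eqref{eq:eigenvalue-degeneration} then follows from \(\norm{W^{-1}}=1/\lambda_{\min}(W)\) for the positive definite matrices \(W_N(\boldsymbol\delta^{(r)})\). The energy statement follows from \eqref{eq:atomic-min-energy}, since
\[
 \sup_{\norm d=1}d^\top W_N^{-1}d=\norm{W_N^{-1}}.
\]

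The key step is to identify the range of the limit Gramian. Let \(i\) be the index with \(\bar\delta_i=0\). The summand \(j=i\) in \(W_N(\bar{\boldsymbol\delta})\) carries the weight \(\bar\delta_i=0\) and drops out. For \(j\ne i\), the polynomial \(p_j\) evaluated at \(\bar{\boldsymbol\delta}\) is
\[
 p_j(z)=\prod_{k=j+1}^N(1+\bar\delta_kz).
\]
When \(j<i\), this product contains the factor \(1+\bar\delta_i z=1\), which is trivial. So each surviving \(p_j\), \(j\ne i\), equals the Newton polynomial built from the \(N-1\) positive masses \((\bar\delta_k)_{k\ne i}\). Concretely, \(W_N(\bar{\boldsymbol\delta})\) coincides with \(W_{N-1}(\tilde{\boldsymbol\delta})\), the Gramian of an \((N-1)\)-atom derivator, where \(\tilde{\boldsymbol\delta}\) is \(\bar{\boldsymbol\delta}\) with the \(i\)th entry deleted. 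Every entry of \(\tilde{\boldsymbol\delta}\) is positive.

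By the quadratic form \eqref{eq:atomic-gramian-qf}, the range of \(W_{N-1}(\tilde{\boldsymbol\delta})\) is \(\Span_{j\ne i}\Range p_j(A)B\). By Lemma~\ref{lem:atomic-polynomial-basis}, or equivalently by Theorem~\ref{thm:atomic-classification} applied to \(N-1\) atoms, this span equals \(\mathcal K_{N-1}(A,B)\). Since \(\nu(A,B)=N\), the definition \eqref{eq:controllability-index} gives \(\mathcal K_{N-1}(A,B)\ne\mathcal K_n(A,B)=\R^n\). So \(W_N(\bar{\boldsymbol\delta})\) has a nontrivial kernel and \(\lambda_{\min}\bigl(W_N(\bar{\boldsymbol\delta})\bigr)=0\).

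The main obstacle is the bookkeeping in this reduction, namely checking that deleting one zero mass really yields the Newton family for the remaining masses in chronological order. The boundary cases need separate attention. If \(i=N\), the polynomials \(p_j\) for \(j<N\) lose their last factor. If \(i=1\), only the \(p_1\) term drops out. In both cases the reindexed family is again of the form covered by Lemma~\ref{lem:atomic-polynomial-basis}. That lemma requires the \(N-1\) remaining masses to be nonzero, which is exactly the hypothesis that only one component of \(\bar{\boldsymbol\delta}\) vanishes. The degenerate case \(N=1\) causes no difficulty: \(W_1(0)=0\) is singular directly.
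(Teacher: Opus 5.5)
Your proposal is correct and follows essentially the same route as the paper. Both arguments use continuity of the polynomial map $\boldsymbol\delta\mapsto W_N(\boldsymbol\delta)$ and of $\lambda_{\min}$, identify $W_N(\bar{\boldsymbol\delta})$ with an $(N-1)$-atom Gramian whose range is $\mathcal K_{N-1}(A,B)\subsetneq\R^n$ because $\nu(A,B)=N$, and finish with $\norm{W^{-1}}=1/\lambda_{\min}(W)$ and Rayleigh--Ritz; the one step you leave implicit is that the prelimit Gramians are positive definite, which follows from Theorem~\ref{thm:atomic-classification} since all masses are positive and $N=\nu(A,B)$.
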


\begin{proof}
The entries of \(W_N(\boldsymbol\delta)\) are polynomials in the masses, because each
\(p_j(A)\) is obtained by multiplying finitely many factors
\(I+\delta_kA\). Therefore
\[
 W_N(\boldsymbol\delta^{(r)})
 \longrightarrow W_N(\bar{\boldsymbol\delta})
\]
in matrix norm.

Suppose the \(\ell\)-th limiting mass is zero. The \(j=\ell\) term in
\eqref{eq:atomic-gramian-delta} vanishes. In every product corresponding to
\(j<\ell\), the factor \(I+\bar\delta_\ell A\) becomes \(I\); the products for
\(j>\ell\) are unchanged. After deleting the zero-mass atom and relabeling the
remaining ones, \(W_N(\bar{\boldsymbol\delta})\) is exactly the Gramian of a system
with \(N-1\) positive atoms. By Theorem~\ref{thm:atomic-classification}, its reachable
space is \(\mathcal K_{N-1}(A,B)\). The assumption \(\nu(A,B)=N\) means
\[
 \mathcal K_{N-1}(A,B)\subsetneq\mathcal K_N(A,B)=\R^n,
\]
so the limiting Gramian is singular.

Eigenvalues of real symmetric matrices depend continuously on the matrix entries;
see \citep{HornJohnson2013}. Hence
\[
 \lambda_{\min}(W_N(\boldsymbol\delta^{(r)}))
 \longrightarrow
 \lambda_{\min}(W_N(\bar{\boldsymbol\delta}))=0.
\]
For every \(r\), all masses are positive and \(N=\nu(A,B)\), so the prelimit Gramian
is positive definite. Consequently,
\[
 \norm{W_N(\boldsymbol\delta^{(r)})^{-1}}
 =\frac1{\lambda_{\min}(W_N(\boldsymbol\delta^{(r)}))}
 \longrightarrow\infty.
\]
By \eqref{eq:atomic-min-energy}, the minimum energy associated with a displacement
\(d\) is \(d^\top W_N^{-1}d\). The Rayleigh--Ritz formula \citep{HornJohnson2013} gives
\[
 \sup_{\norm{d}=1}d^\top W_N^{-1}d
 =\lambda_{\max}(W_N^{-1})
 =\norm{W_N^{-1}},
\]
which proves the last assertion.
\end{proof}

\begin{corollary}[Uniform lower bound for the atomic Gramian]
\label{cor:energy-compact}
If \((A,B)\) is Kalman controllable, \(N\ge\nu(A,B)\), and the masses range in a
compact box \([\varepsilon,M]^N\) with \(0<\varepsilon<M<\infty\), then there is a
constant \(c_{\varepsilon,M}>0\) such that
\[
 W_N(\boldsymbol\delta)\ge c_{\varepsilon,M}I
 \quad\text{for every }\boldsymbol\delta\in[\varepsilon,M]^N.
\]
Hence the minimum-energy cost for unit terminal displacements is uniformly bounded over this family of atomic measures.
\end{corollary}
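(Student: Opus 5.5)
The plan is a compactness argument: positive definiteness is pointwise on the box, and continuity plus compactness makes it uniform.

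First, for every \(\boldsymbol\delta\in[\varepsilon,M]^N\) all masses are positive. Theorem~\ref{thm:atomic-classification} therefore gives the reachable subspace \(\mathcal K_N(A,B)\). Since the pair is Kalman controllable and \(N\ge\nu(A,B)\), this subspace is \(\R^n\). By the discussion following \eqref{eq:atomic-gramian-delta}, \(W_N(\boldsymbol\delta)=\cK_N\cK_N^*\) is then positive definite, so \(\lambda_{\min}(W_N(\boldsymbol\delta))>0\) at every point of the box.

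Second, I use continuity. The entries of \(W_N(\boldsymbol\delta)\) are polynomials in the masses, as already noted in the proof of Theorem~\ref{thm:energy-degeneration}. Eigenvalues of real symmetric matrices depend continuously on the entries \citep{HornJohnson2013}; more directly, \(\lambda_{\min}(W)=\min_{\norm{z}=1}z^\top Wz\) is a minimum of continuous functions and is \(1\)-Lipschitz in the operator norm. Hence \(\boldsymbol\delta\mapsto\lambda_{\min}(W_N(\boldsymbol\delta))\) is continuous on the compact set \([\varepsilon,M]^N\), attains its minimum there, and by the first step that minimum is a strictly positive number \(c_{\varepsilon,M}\). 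Then \(z^\top W_N(\boldsymbol\delta)z\ge c_{\varepsilon,M}\norm{z}^2\) for all \(z\), which is the matrix inequality \(W_N(\boldsymbol\delta)\ge c_{\varepsilon,M}I\).

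Finally, by \eqref{eq:atomic-min-energy} and the Rayleigh--Ritz formula, for \(\norm{d}=1\) the minimum energy satisfies
\[
d^\top W_N^{-1}d\le\norm{W_N^{-1}}=1/\lambda_{\min}(W_N)\le 1/c_{\varepsilon,M},
\]
uniformly over the box.

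The only delicate point is the first step: the box must exclude zero masses, since Theorem~\ref{thm:energy-degeneration} shows degeneration at the boundary. This is exactly where \(\varepsilon>0\) is used; \(M<\infty\) only serves compactness.
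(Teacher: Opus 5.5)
Your proposal is correct and follows essentially the same route as the paper. You obtain pointwise positive definiteness from Theorem~\ref{thm:atomic-classification}, use continuity of \(\boldsymbol\delta\mapsto\lambda_{\min}(W_N(\boldsymbol\delta))\) on the compact box to get a strictly positive attained minimum, and conclude with \(d^\top W_N^{-1}d\le c_{\varepsilon,M}^{-1}\norm{d}^2\); your observation that \(\lambda_{\min}\) is \(1\)-Lipschitz in operator norm is a slightly more self-contained justification of the continuity step than the paper's citation.
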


\begin{proof}
For every \(\boldsymbol\delta\in[\varepsilon,M]^N\), all masses are positive and
\(N\ge\nu(A,B)\). Theorem~\ref{thm:atomic-classification} therefore implies that
\(W_N(\boldsymbol\delta)\) is positive definite. The function
\[
 \boldsymbol\delta\longmapsto
 \lambda_{\min}(W_N(\boldsymbol\delta))
\]
is continuous on the compact box \([\varepsilon,M]^N\). It consequently attains its
minimum there. Since it is strictly positive at every point, that minimum is a number
\(c_{\varepsilon,M}>0\). Thus
\[
 z^\top W_N(\boldsymbol\delta)z
 \ge c_{\varepsilon,M}\norm{z}^2
\]
for every \(z\in\R^n\) and every admissible mass vector. Equivalently,
\(W_N(\boldsymbol\delta)\ge c_{\varepsilon,M}I\). The energy bound follows from
\[
 d^\top W_N(\boldsymbol\delta)^{-1}d
 \le c_{\varepsilon,M}^{-1}\norm{d}^2.
\]
\end{proof}

\subsection{A mixed continuous--inactive--atomic double-integrator example}
\label{subsec:worked-example}

We illustrate the preceding results with a system containing continuous, inactive, and atomic phases. Consider the double integrator on \([0,3]\),
\begin{equation}\label{eq:worked-system}
 x'_g(t)=Ax(t)+Bu(t),\qquad
 A=\begin{pmatrix}0&1\\0&0\end{pmatrix},\qquad
 B=\begin{pmatrix}0\\1\end{pmatrix},
\end{equation}
with initial state \(x(0)=0\). Let
\begin{equation}\label{eq:worked-g}
 g(t)=
 \begin{cases}
 t,&0\le t\le1,\\
 1,&1<t\le2,\\
 t,&2<t\le3.
 \end{cases}
\end{equation}
Thus, the system evolves continuously on \([0,1]\), remains inactive on
\((1,2)\), undergoes a unit right jump at \(t=2\), and evolves continuously again
on \((2,3]\). More precisely,
\begin{equation}\label{eq:worked-measure}
 \mug=\mathcal L^1\!\restriction_{[0,1]}
      +\delta_2
      +\mathcal L^1\!\restriction_{(2,3]},
 \qquad \Deltag(2)=1.
\end{equation}
The value of a control on the inactive interval \((1,2)\) is irrelevant in
\(L_g^2\), whereas the value at \(t=2\) contributes to the endpoint map because
\(\mug(\{2\})=1\).

Since \(A^2=0\),
\[
 e^{\alpha A}=\Id+\alpha A,
 \qquad
 \Id+\Deltag(2)A=\Id+A=
 \begin{pmatrix}1&1\\0&1\end{pmatrix}.
\]
The matrix coefficient is therefore \(g\)-regressive. Formula
\eqref{eq:constant-formula} gives
\begin{equation}\label{eq:worked-Phi}
 \Phi(t,s)=\Id+\mug([s,t))A,
 \qquad 0\le s\le t\le3.
\end{equation}
For controllability at \(t_1=3\), the relevant quantity is the post-jump
propagator. Define
\[
 \vartheta(s):=\mug((s,3)).
\]
On the support of \(\mug\),
\begin{equation}\label{eq:worked-theta}
 \vartheta(s)=3-s,
 \qquad s\in[0,1]\cup\{2\}\cup(2,3).
\end{equation}
For \(s=2\), the interval \((2,3)\) excludes the atom at the lower endpoint, consistently with the post-jump notation \(s^+\). Hence
\begin{equation}\label{eq:worked-H}
 H(s)=\Phi(3,s^+)B
 =\bigl(\Id+\vartheta(s)A\bigr)B
 =\begin{pmatrix}\vartheta(s)\\1\end{pmatrix}
 =\begin{pmatrix}3-s\\1\end{pmatrix}
\end{equation}
for \(\mug\)-almost every \(s\).

The Gramian contains two continuous contributions and one atomic contribution:
\begin{align}
 W(0,3)
 &={}
 \int_0^1
 \begin{pmatrix}(3-s)^2&3-s\\3-s&1\end{pmatrix}\,ds
 +
 \begin{pmatrix}1&1\\1&1\end{pmatrix} \notag\\
 &\quad+
 \int_2^3
 \begin{pmatrix}(3-s)^2&3-s\\3-s&1\end{pmatrix}\,ds.      \label{eq:worked-W-integrals}
\end{align}
A direct calculation yields
\begin{equation}\label{eq:worked-W}
 W(0,3)=
 \begin{pmatrix}
 \dfrac{23}{3}&4\\[1mm]
 4&3
 \end{pmatrix},
 \qquad
 \det W(0,3)=7.
\end{equation}
Since \(\det W(0,3)>0\), the system is exactly controllable. The atomic term in \eqref{eq:worked-W-integrals} contributes the rank-one matrix associated with the instantaneous input at \(t=2\) and cannot be omitted.

We next consider the transfer from the origin to
\[
 x_1=\begin{pmatrix}1\\0\end{pmatrix}.
\]
Since \(\Phi(3,0)x(0)=0\), the displacement is \(d=x_1\), and
\begin{equation}\label{eq:worked-Winverse}
 W(0,3)^{-1}
 =\frac1{7}
 \begin{pmatrix}
 3&-4\\[1mm]
 -4&\dfrac{23}{3}
 \end{pmatrix},
 \qquad
 W(0,3)^{-1}d=
 \begin{pmatrix}3/7\\-4/7\end{pmatrix}.
\end{equation}
The minimum-energy control in Corollary~\ref{cor:minenergy} is therefore
\begin{equation}\label{eq:worked-control}
 u^*(s)=H(s)^\top W(0,3)^{-1}d
 =\frac{3(3-s)-4}{7}
 =\frac{5-3s}{7}
\end{equation}
for \(\mug\)-almost every active time. A convenient representative is
\begin{equation}\label{eq:worked-control-piecewise}
 u^*(s)=
 \begin{cases}
 \dfrac{5-3s}{7},&0\le s\le1,\\[1mm]
 0,&1<s<2,\\[1mm]
 -\dfrac17,&s=2,\\[1mm]
 \dfrac{5-3s}{7},&2<s<3.
 \end{cases}
\end{equation}
The value assigned on the inactive interval is merely a choice of representative, whereas the atomic value \(u^*(2)=-1/7\) is determined by the minimum-energy problem.

We next compute the corresponding trajectory. On the first continuous
phase, \eqref{eq:worked-system} reduces to the classical double integrator, and
\begin{equation}\label{eq:worked-trajectory-first}
 x(t)=
 \begin{pmatrix}
 \dfrac{t^2(5-t)}{14}\\[2mm]
 \dfrac{t(10-3t)}{14}
 \end{pmatrix},
 \qquad 0\le t\le1.
\end{equation}
Hence
\[
 x(1)=\begin{pmatrix}2/7\\1/2\end{pmatrix}.
\]
The state is constant throughout the inactive interval:
\begin{equation}\label{eq:worked-trajectory-plateau}
 x(t)=\begin{pmatrix}2/7\\1/2\end{pmatrix},
 \qquad 1<t\le2.
\end{equation}
At the atom, \(\Deltag(2)=1\), and the jump law reads
\[
 x(2^+)=(\Id+A)x(2)+Bu^*(2).
\]
Since
\[
 (\Id+A)x(2)
 =\begin{pmatrix}1&1\\0&1\end{pmatrix}
  \begin{pmatrix}2/7\\1/2\end{pmatrix}
 =\begin{pmatrix}11/14\\1/2\end{pmatrix},
 \qquad
 Bu^*(2)=\begin{pmatrix}0\\-1/7\end{pmatrix},
\]
we obtain
\begin{equation}\label{eq:worked-jump-state}
 x(2^+)=
 \begin{pmatrix}11/14\\5/14\end{pmatrix}.
\end{equation}
Finally, on \((2,3]\), integration from the post-jump state gives
\begin{equation}\label{eq:worked-trajectory-last}
 x(t)=
 \begin{pmatrix}
 \dfrac{-t^3+5t^2-3t+5}{14}\\[2mm]
 \dfrac{(3-t)(3t-1)}{14}
 \end{pmatrix},
 \qquad 2<t\le3.
\end{equation}
In particular,
\[
 x(3)=\begin{pmatrix}1\\0\end{pmatrix},
\]
as prescribed.

The minimum energy can be checked both from the Gramian and directly:
\begin{align}
 \norm{u^*}_{L_g^2}^2
 &=d^\top W(0,3)^{-1}d=\frac37,                    \label{eq:worked-energy-gramian}\\
 &=\int_0^1\left(\frac{5-3s}{7}\right)^2ds
   +\left(-\frac17\right)^2
   +\int_2^3\left(\frac{5-3s}{7}\right)^2ds       \notag\\
 &=\frac{13}{49}+\frac1{49}+\frac17
 =\frac37.                                         \label{eq:worked-energy-direct}
\end{align}
The example illustrates the role of the inactive interval, the post-jump propagator, the atomic contribution to the Gramian, and the distinction between control values on \(\mug\)-null sets and at atoms.

\section{Conclusions}

This paper separates two ingredients of exact controllability for linear Stieltjes
systems. The pair \((A,B)\) determines the algebraic control directions, whereas the
derivator determines which propagated directions are made available by the temporal
measure. The endpoint-operator formula and its Lebesgue--Stieltjes Gramian express this
separation for temporal structures containing continuous, singular-continuous,
inactive and atomic components.

For constant coefficients, and under regressivity, continuity and nonconstancy of the
derivator on one subinterval are sufficient to recover the classical Kalman criterion.
At the opposite extreme, a purely atomic derivator with exactly \(N\) positive atoms
produces the truncated Krylov space
\(\operatorname{span}\{\operatorname{Ran}B,\ldots,\operatorname{Ran}A^{N-1}B\}\).
Consequently, for a Kalman-controllable pair, the controllability index is the exact
minimum number of positive atoms among purely atomic derivators needed for
controllability. This algebraic statement is independent of the atomic locations and
masses.

The energy analysis shows that rank alone does not describe the quality of control.
Atomic masses enter the Gramian and determine its coercivity. For a fixed
Kalman-controllable pair with \(N=\nu(A,B)\), letting exactly one mass tend to zero
while the others converge to positive finite limits forces the smallest Gramian
eigenvalue to zero and makes the worst-case minimum energy unbounded. For a fixed
Kalman-controllable pair and \(N\ge\nu(A,B)\), masses in a compact subset of
\((0,\infty)^N\) yield a uniform positive Gramian lower bound. The worked
double-integrator example displays all of these mechanisms in a mixed
continuous--inactive--atomic setting.

Natural continuations include optimal allocation of a prescribed total Stieltjes
mass, optimal placement of atoms, time-dependent coefficients, robustness with respect
to perturbations of the derivator and the corresponding observability theory.

\section*{Funding}
This work was supported by the Xunta de Galicia through the project
``Consolidaci\'on e Estruturaci\'on 2023 GRC GI-1561---Ecuaci\'ons
diferenciais non lineais (EDNL).''

\section*{Acknowledgements}
During the preparation of this manuscript, the author used OpenAI's ChatGPT to assist
with language editing, LaTeX preparation, manuscript organization and consistency
checks. The author reviewed and verified the mathematical statements, proofs,
references and conclusions, and takes full responsibility for the content of the
manuscript.

\section*{Data availability}
No datasets were generated or analysed in this theoretical study.

\section*{Conflict of interest}
The author declares no conflict of interest.

\end{document}